\documentclass[10pt]{article}
\usepackage{amsmath,amsfonts,euscript,amssymb,amsthm,graphicx,verbatim, esint}
\usepackage{color}
\usepackage{hyperref}
\usepackage{enumerate}
\usepackage{mathrsfs}

\usepackage{mathtools}
\usepackage{caption}
\usepackage{subcaption}
\usepackage{authblk}

\newtheorem{theorem}{Theorem}[section]
\newtheorem{proposition}[theorem]{Proposition}
\newtheorem{lemma}[theorem]{Lemma}
\newtheorem{corollary}[theorem]{Corollary}

\theoremstyle{remark}
\newtheorem{remark}{Remark}

\newcommand{\R}{\mathbb{R}}
\newcommand{\N}{\mathbb{N}}
\newcommand{\Z}{\mathbb{Z}}
\newcommand{\cA}{\mathcal{A}}

\newcommand{\cF}{\mathcal{F}}
\newcommand{\cG}{\mathcal{G}}
\newcommand{\cR}{\mathcal{R}}
\newcommand{\St}{\mathbb{S}^2}

\newcommand{\eps}{\varepsilon}
\newcommand{\ein}{\boldsymbol{e}}
\newcommand{\bs}[1]{ \boldsymbol{#1}}

\newcommand{\m}{\boldsymbol{m}}
\renewcommand{\ni}{\boldsymbol{n}^{\mathrm{in}}}
\newcommand{\no}{\boldsymbol{n}^{\mathrm{out}}}
\newcommand{\n}{\boldsymbol{n}}
\newcommand{\bu}{\boldsymbol{u}}
\newcommand{\bv}{\boldsymbol{v}}
\newcommand{\bw}{\boldsymbol{w}}
\newcommand{\dif}{\ensuremath{\,\mathrm{d}}}

\DeclarePairedDelimiter{\abs}{\lvert}{\rvert} 
\DeclarePairedDelimiter{\norm}{\lVert}{\rVert} 
\DeclarePairedDelimiter{\bra}{(}{)} 
\DeclarePairedDelimiter{\pra}{[}{]} 
\DeclarePairedDelimiter{\set}{\{}{\}} 

\title{Existence and strong-field asymptotics of skyrmions in a fourth-order model of frustrated ferromagnets}

\author{Xinye Li}

\AtEndDocument{%
 \par
 \medskip
 \begin{tabular}{@{}l@{}}%
School of Mathematics and Statistics, Central South University,\\ 410083 Changsha, China\\
 \textit{Email:} \texttt{xinye.li@csu.edu.cn}\\

 \end{tabular}}

\begin{document}
\maketitle

\begin{abstract}
We study a fourth-order variational model for two-dimensional frustrated ferromagnets with competing exchange interactions and an applied magnetic field of strength $H>0$. For every $H>1/4$, we prove that the energy admits minimizers in the topological classes $Q=\pm1$ and that every minimizing sequence is precompact in $H^2$ modulo translations. The main difficulty is that spectral coercivity degenerates as $H\downarrow1/4$. Using a Helmholtz circle-mean identity, we prove that the residual energy of every nonzero-degree configuration has a uniform positive lower bound, even at the degenerate endpoint. Together with a sphere-valued $H^2$-splitting construction, this provides a threshold-stable binding inequality and yields compactness throughout the coercive regime. We also identify $H=1/4$ as the sharp spectral threshold.  Below it the energy is unbounded from below, whereas at the threshold nonzero-degree configurations retain a positive energy barrier and degree-zero Weyl sequences lose compactness. Finally, in the strong-field regime, rescaled minimizers approach those minimizers of the limiting functional that maximize the Dirichlet energy, while topological-charge and normalized-energy measures concentrate on the $H^{-1/4}$-scale.
\end{abstract}

MSC 2010: 49J10, 35J35, 35B40, 82D40

Keywords: Skyrmions; variational methods; concentration-compactness; topological degree; higher-order energy functionals.

\section{Introduction and main results}

Magnetic skyrmions are localized two-dimensional magnetization configurations carrying a nontrivial topological degree. 
Much of the rigorous mathematical literature concerns chiral magnets in which the Dzyaloshinskii--Moriya interaction (DMI) favors twisting, while exchange, anisotropy, and Zeeman terms determine the energetic cost and spatial scale of localized skyrmion textures \cite{Melcher2014,DoringMelcher2017,BernandMantelMuratovSimon2021}. A distinct stabilization mechanism arises in inversion-symmetric frustrated magnets, where competing exchange interactions give rise, at the continuum level, to higher-order exchange energies. Isolated skyrmions and continuum descriptions based on this mechanism were developed in \cite{Leonov2015,LinHayami2016,Kharkov2017}. Three-dimensional knotted extensions were investigated in \cite{Sutcliffe2017}. Experimentally, skyrmion phases associated with exchange frustration have been observed in several centrosymmetric materials \cite{Hirschberger2019, Kurumaji2019}. 

Motivated by these developments, we undertake a variational analysis of skyrmions in frustrated ferromagnetic models. More specifically, we work in two spatial dimensions with magnetization fields $\m=(m_1,m_2,m_3):\R^2\to\St$. After renormalization and a rotation in magnetization space aligning the external field with the $\ein_3$-direction, the energy takes the form
\begin{equation}\label{eq:EH-intro}
E_H(\m)=\int_{\R^2} \bra*{-\frac{1}{2} |\nabla \m|^2+\frac{1}{2} |\Delta \m|^2 + H(1-m_3)} \dif x,
\end{equation}
where $H>0$ denotes the applied-field strength.
The negative second-order term energetically favors spatial variations of the magnetization, while the positive fourth-order term controls high-frequency oscillations. Their competition provides the continuum mechanism for frustration-induced stabilization. The Zeeman term $H(1-m_3)$ selects the uniform state $\ein_3$ as the far-field vacuum. Accordingly, the natural finite-energy class is
\[
\cA:=\set*{\m:\R^2 \to \St, \, \m-\ein_3 \in H^2(\R^2;\R^3)}.
\]
By the Sobolev embedding $H^2(\R^2) \hookrightarrow C^{0,\gamma}(\R^2)$ for every $\gamma \in (0,1)$, each $\m\in\cA$ admits a uniformly continuous representative. Since $\m-\ein_3\in H^2(\R^2;\R^3)$, this representative satisfies $\m(x)\to\ein_3$ as $|x|\to\infty$. Consequently, setting $\m(\infty)=\ein_3$ defines a continuous map from the one-point compactification $\R^2 \cup \set{\infty} \cong \St$ into $\St$, whose topological degree is given by
\[
Q(\m)=\frac{1}{4\pi}\int_{\R^2} q(\m) \, \dif x, \quad q(\m)=\m \cdot \partial_1 \m \times \partial_2 \m.
\]
Our main object is the variational problem
\[
I_q(H):=\inf \set*{E_H(\m), \, \m \in \cA_q}, \quad\text{where } \cA_q=\set*{\m \in \cA, \, Q(\m)=q}.
\]
The energy $E_H$ is invariant under spatial reflections, whereas an orientation-reversing reflection changes the sign of $Q$. Consequently, it suffices to work in the class $Q=-1$; all the results below have identical counterparts in the class $Q=+1$.

Rigorous variational theories for skyrmion-type models have established existence and stability on the whole plane \cite{Melcher2014,LiMelcher2018}, as well as existence, compactness, and asymptotic concentration in confined geometries \cite{MonteilMuratovSimonSlastikov2023,MuratovSimonSlastikov2025}. For the fourth-order frustrated model considered here, Harland established topological lower bounds proportional to the  topological degree \cite{Harland2019}, while Speight and Winyard analyzed the low-field regime, where the uniform state is replaced by a conical spiral background \cite{SpeightWinyard2020}.  Related deterministic and stochastic evolution equations with competing second- and fourth-order exchange have been studied in \cite{DoresicMelcher2022,GoldysJiaoMelcher2025}.  However, these works do not establish existence and compactness of minimizers in the degree-$\pm1$ classes throughout $H>1/4$, or their strong-field asymptotics.

A first structural feature of the variational problem is the sharp threshold $H=1/4$. Indeed, writing $\bu=\m-\ein_3$ and using $|\m|=1$, we have
\[
1-m_3=\frac{1}{2}|\m-\ein_3|^2=\frac{1}{2}|\bu|^2.
\]
Hence
\[
E_H(\m)
=
\frac12\int_{\R^2} |\Delta \bu|^2-|\nabla \bu|^2+H| \bu|^2 \dif x .
\]
By Plancherel's theorem,
\[
E_H(\m) =\frac{1}{2}\int_{\R^2} \bra*{|\xi|^4-|\xi|^2+H} |\widehat \bu(\xi)|^2 \dif \xi 
=\frac{1}{2}\int_{\R^2} \bra*{\left(|\xi|^2-\frac12\right)^2+H-\frac{1}{4}} |\widehat \bu(\xi)|^2 \dif \xi.
\]
Thus, $E_H$ is coercive for every $H>1/4$, i.e. there exists $c_H>0$ such that
\[
E_H(\m) \ge c_H \norm{\m-\ein_3}^2_{H^2(\R^2)}.
\]

The characteristic length scale follows from Derrick's scaling argument \cite{Derrick1964}. Set $\m_R(x)=\m(x/R)$ for $R>0$. Then $Q(\m_R)=Q(\m)$, while
\[
E_H(\m_R) = -\frac{1}{2} \int_{\R^2} |\nabla \m|^2 \dif x + \frac{1}{2R^2} \int_{\R^2} |\Delta \m|^2 \dif x + HR^2 \int_{\R^2} 1-m_3 \dif x.
\]
For a localized profile of characteristic size $R$, the fourth-order exchange and field terms are of order $1/R^2$ and $HR^2$, respectively, whereas the Dirichlet term is scaling invariant. Balancing the two scale-dependent positive contributions gives $R \sim H^{-1/4}$. It is useful to formulate the problem on this natural length scale. For $0 \le \eps \le 2$, set
\[
\cF_\eps(\n):=\frac{1}{2} \int_{\R^2} |\Delta \n|^2 +|\n - \ein_3|^2 -\eps |\nabla \n|^2 \dif x
\]
and define
\[
\widehat{I}_q(\eps):= \inf\set*{\cF_\eps(\n), \, \n \in \cA, \, Q(\n)=q}.
\]
For $H \ge1/4$, let $\eps = H^{-1/2}$. The scaling $\m(x)=\n(H^{1/4}x)$ gives
\begin{equation}\label{eq:exact}
E_H(\m)=\sqrt{H} \cF_\eps(\n), \qquad I_q(H)=\sqrt{H} \widehat{I}_q(\eps).
\end{equation}
Thus the finite-field and rescaled problems are equivalent. The rescaled formulation also includes the limiting problem $\eps=0$ and turns the strong-field regime $H \to \infty$ into the parameter limit $\eps \to 0$.

We first establish existence and compactness for the rescaled problem.
\begin{theorem}\label{thm:existence}
For every $0 \le \eps <2$, the infimum $\widehat{I}_{-1}(\eps)$ is attained, i.e. there exists $\n_\eps \in \cA_{-1}$ such that
\[
\cF_\eps(\n_\eps) = \widehat{I}_{-1}(\eps).
\]
Moreover, if $(\n_k)_{k \in \N} \subset \cA_{-1}$ is a minimizing sequence for $\widehat{I}_{-1}(\eps)$, then there exist translations $a_k \in \R^2$ and a subsequence along which 
\[
\n_k (a_k + \cdot) - \ein_3 \longrightarrow \n_*-\ein_3 \quad\text{strongly in }H^2(\R^2;\R^3)
\]
for a minimizer $\n_* \in \cA_{-1}$.

In particular, if $0<\eps<2$, $H=\eps^{-2}$, and $\m_H(x)=\n_\eps(H^{1/4}x)$, then $\m_H$ minimizes $E_H$ over $\cA_{-1}$. Hence $I_{-1}(H)$ is attained for every $H>1/4$.
\end{theorem}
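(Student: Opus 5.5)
Our plan is a concentration-compactness argument in $H^2$, built on a strict binding inequality. Write $\bu_k=\n_k-\ein_3$. In Fourier variables,
\[
\cF_\eps(\n)=\tfrac12\int \bra{|\xi|^4-\eps|\xi|^2+1}|\widehat\bu|^2\dif\xi .
\]
Since $1-\eps^2/4>0$ for $\eps<2$, we have $\cF_\eps(\n)\ge c_\eps\norm{\bu}_{H^2}^2$. Hence every minimizing sequence is bounded in $H^2$.

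\textbf{Step 1: a uniform lower bound.} We first show $\widehat I_{\pm1}(\eps)>0$, and more strongly that every $\n$ with $Q(\n)\neq0$ satisfies $\cF_\eps(\n)\ge\delta_\eps>0$. The starting point is the standard estimate $|Q|\le\frac1{8\pi}\int|\nabla\n|^2$. We combine it with an $L^\infty$ smallness mechanism:
\begin{itemize}
\item[(a)] if $\norm{\bu}_{H^2}$ is small, then $\norm{\bu}_\infty$ is small, so $\n$ misses $-\ein_3$ and $Q=0$;
\item[(b)] otherwise coercivity gives $\cF_\eps\ge c_\eps\norm{\bu}_{H^2}^2$, which is bounded below.
\end{itemize}
So $Q\neq0$ forces $\norm{\bu}_{\infty}\ge 2$ somewhere, and therefore $\norm{\bu}_{H^2}\gtrsim1$. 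This gives $\delta_\eps>0$. The paper's circle-mean refinement is only needed for uniformity as $\eps\uparrow2$, which we do not require at fixed $\eps<2$.

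\textbf{Step 2: excluding vanishing.} Apply Lions' lemma to the bounded sequence $(\bu_k)$ in $H^2$. If $\sup_a\int_{B_1(a)}|\bu_k|^2\to0$, then $\bu_k\to0$ in $L^p$ for every $2<p<\infty$. Interpolation in $H^2$ then gives $\nabla\bu_k\to0$ in $L^4$. Since $q(\n)$ is controlled pointwise by $|\nabla\bu|^2$, this yields $Q(\n_k)\to0$, contradicting $Q(\n_k)=-1$.

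So, up to translations $a_k$, we get $\bu_k(a_k+\cdot)\rightharpoonup\bu_*\neq0$ weakly in $H^2$ and strongly in $H^1_{\rm loc}$ and $C^0_{\rm loc}$. The limit $\n_*=\ein_3+\bu_*$ is $\St$-valued.

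\textbf{Step 3: splitting and binding.} Choose $R_k\to\infty$ slowly and cut off smoothly on the annulus $A_k=\{R_k<|x-a_k|<2R_k\}$. Because $\n_k$ is uniformly close to $\ein_3$ there (after a diagonal choice of $R_k$), we can build two $\St$-valued maps:
\begin{itemize}
\item $\n_k^1$, equal to $\n_k$ inside and to $\ein_3$ outside;
\item $\n_k^2$, equal to $\n_k$ outside and to $\ein_3$ inside.
\end{itemize}
The construction interpolates $\bu_k$ and projects back to the sphere via $(\ein_3+\chi\bu)/|\ein_3+\chi\bu|$. The error terms are controlled by the $H^2$ norm of $\bu_k$ on $A_k$, which is made small by a pigeonhole choice among dyadic annuli. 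This gives
\[
\cF_\eps(\n_k)\ge\cF_\eps(\n^1_k)+\cF_\eps(\n^2_k)-o(1),\qquad Q(\n_k^1)+Q(\n_k^2)=-1 .
\]
Here $Q(\n_k^1)\to Q(\n_*)$ by the strong local convergence, and the degrees are integers.

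We also need $\widehat I_q(\eps)>0$ for every $q\neq0$ and $\widehat I_0=0$. Suppose $Q(\n_*)\neq-1$. Then one piece has nonzero degree $q_1\ne -1$ and the other has degree $-1-q_1\neq0$. Step 1 then gives $\widehat I_{-1}\ge 2\delta_\eps$. We close the argument with a strict subadditivity contradiction: compare against a single-bubble competitor obtained by gluing a near-optimal configuration of degree $q_1$ with one of degree $-1-q_1$. Concretely, we aim to show $\widehat I_{-1}<\widehat I_{q}+\widehat I_{-1-q}$ whenever both $q$ and $-1-q$ are nonzero. The route is $\widehat I_q\ge |q|\,\widehat I_{\pm1}$-type lower bounds (in the spirit of Harland's degree-proportional estimate), together with a direct upper bound on $\widehat I_{-1}$.

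\textbf{Step 4: strong convergence.} Once $Q(\n_*)=-1$, lower semicontinuity for the positive quadratic form gives $\cF_\eps(\n_*)\le\liminf\cF_\eps(\n_k^1)$. The remainder $\n_k^2$ has degree $0$, so $\cF_\eps(\n_k^2)\ge0$, and in fact $\cF_\eps(\n_k^2)\ge c_\eps\norm{\n_k^2-\ein_3}_{H^2}^2$. Minimality then forces $\cF_\eps(\n_*)=\widehat I_{-1}$ and $\norm{\n_k^2-\ein_3}_{H^2}\to0$. Convergence of the quadratic form, being equivalent to the $H^2$ norm, upgrades weak to strong convergence.

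The final claim about $E_H$ follows from the scaling identity \eqref{eq:exact}. The case $\eps=0$ is identical.

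\textbf{Main obstacle.} The expected obstacle is the binding inequality in Step 3, specifically ruling out splitting into two nonzero-degree pieces of degrees $q_1$ and $-1-q_1$ (for example $q_1=-2$ and $+1$, since degrees may have opposite signs). The first tool we will try is a degree-proportional lower bound $\widehat I_q\ge|q|\,\widehat I_{-1}$ combined with $\widehat I_{+1}=\widehat I_{-1}$, which gives $\widehat I_{q_1}+\widehat I_{-1-q_1}\ge(|q_1|+|1+q_1|)\widehat I_{-1}\ge 2\widehat I_{-1}>\widehat I_{-1}$.
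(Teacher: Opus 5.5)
Your architecture matches the paper's: coercivity for fixed $\eps<2$, exclusion of vanishing, the normalized cutoff $(\ein_3+\chi\bu)/|\ein_3+\chi\bu|$ on a low-mass annulus with exact degree splitting, coercivity against degree-zero remainders, and the quadratic-form upgrade to strong $H^2$ convergence. The binding step in Step~3, however, has a genuine gap, and your remark that the circle-mean refinement is only needed for uniformity as $\eps\uparrow2$ is incorrect.

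\textbf{The binding step.} The inequality $\widehat I_q(\eps)\ge|q|\,\widehat I_{-1}(\eps)$ is not available. Harland's estimate is $\cF_\eps\ge\frac{16\pi}{3}(2-\eps)|Q|$, with a fixed constant rather than $\widehat I_{-1}$. Your version is an unproved no-binding statement for multi-skyrmions. Gluing near-optimal pieces only gives $\widehat I_{-1}\le\widehat I_q+\widehat I_{-1-q}$, which is non-strict and points the wrong way.

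The remaining route is an explicit linear-in-degree lower bound played against an explicit upper bound for $\widehat I_{-1}(\eps)$. It can only work for $\eps$ away from $2$. Two nonzero-degree pieces with $|q|+|1+q|\ge3$ are only guaranteed $16\pi(2-\eps)$ (Harland) or $12\pi(2-\eps)$ (Belavin--Polyakov). By contrast, every degree $-1$ map satisfies $\cF_\eps\ge b_*$ by Lemma~\ref{la:south-cost}. So once $2-\eps<b_*/(16\pi)$, these bounds lie below $\widehat I_{-1}(\eps)$ itself, and no competitor can produce the contradiction.

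Your Step~1 constant $\delta_\eps\sim c_\eps/C_S^2$ also degenerates, since $c_\eps\to0$; the linear $L^\infty$ estimate ignores the sphere constraint. It is also not explicit enough to compare with anything.

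\textbf{What is missing.} At each fixed $\eps$ near $2$ you need an $\eps$-independent cost for every nonzero-degree piece. The paper obtains it from the constraint $|\n|=1$. A map of nonzero degree attains $-\ein_3$ at some $x_0$. The Helmholtz circle-mean identity at radius $r_*$ then forces $|\widetilde{\bs e}(x_0,r_*)|\ge2\alpha$. Hence $\frac12\|\cR(\n)\|_{L^2}^2\ge b_*$ and $\cF_\eps(\n)\ge b_*+8\pi(1-\eps/2)|Q(\n)|$. This is compared with the explicit trial map, $\sqrt{A_0P_0}-\eps D_0\le2b_*+24\pi(1-\eps/2)-\delta_*$. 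The comparison uses $\sqrt{A_0P_0}-2D_0<2b_*$ at the endpoint and $D_0<12\pi$ to propagate the gap to all $\eps\in[0,2]$.

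\textbf{Two smaller repairs.} In Step~2, $\nabla\bu_k\to0$ in $L^4$ together with $|q(\n)|\le\frac12|\nabla\bu|^2$ does not give $Q(\n_k)\to0$, because $\|\nabla\bu_k\|_{L^2}^2\ge8\pi$ stays bounded below. Use instead that vanishing plus the $H^2$ bound gives $\bu_k\to0$ in $L^\infty$, by Gagliardo--Nirenberg or the paper's local Sobolev estimate. Then the image eventually misses $-\ein_3$ and $Q(\n_k)=0$.

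In Step~3, your claim that one piece has nonzero degree $q_1\ne-1$ does not cover the case $Q(\n_*)=0$, i.e.\ a nontrivial degree-zero profile $\bu_*\ne0$ with all the charge escaping. This case must be excluded by coercivity: $\liminf_k\cF_\eps(\n_k^1)\ge c_\eps\|\bu_*\|_{H^2}^2>0$, while $\cF_\eps(\n_k^2)\ge\widehat I_{-1}(\eps)$. This is exactly how the paper treats the case $q=0$.
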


The principal novelty behind Theorem~\ref{thm:existence} is a topological energy barrier that remains uniform up to $\eps=2$. Harland \cite{Harland2019} proved a topological lower bound, in the rescaled variables,
\[
\cF_\eps(\n) \ge \frac{16 \pi}{3} \bra*{2-\eps}|Q(\n)|,
\]
which is linear in degree and positive throughout the coercive regime, but degenerates as $\eps \uparrow 2$, and therefore cannot provide a threshold-stable splitting barrier. The key to our proof  is the exact residual decomposition
\[
\cF_\eps(\n)=\frac{1}{2} \norm{(\Delta+1)(\n-\ein_3)}^2_{L^2(\R^2)} + \bra*{1-\frac{\eps}{2}}\norm{\nabla \n}^2_{L^2(\R^2)}.
\]
If $Q(\n)\ne0$, then $\n$ is surjective and hence attains the south pole. Combining this fact with a forced Helmholtz circle-mean identity gives
\[
\frac{1}{2} \norm{(\Delta+1)(\n-\ein_3)}^2_{L^2(\R^2)} \ge  b_*>0
\]
where $r_*$ denotes the first positive zero of the Bessel function $J_1$ and
\[
b_* := \frac{8\pi J_0(r_*)^2}{1-J_0(r_*)^2}>0
\]
is an explicit constant independent of $\eps$.  Together with the classical topological lower bound \cite{BelavinPolyakov1975}, this yields
\[
\cF_\eps(\n) \ge b_*+8\pi\bra*{1-\frac{\eps}{2}}|Q(\n)|, \quad\text{if}\quad Q(\n)\ne 0.
\]
Thus every nonzero-degree component pays the fixed cost $b_*$, even at the degenerate endpoint. Consequently, a splitting into two nonzero-degree components costs at least $2b_*$ at $\eps=2$. An explicit degree-$-1$ trial map lies strictly below this barrier, and a comparison of the $\eps$-dependence propagates the resulting binding gap throughout $0\le\eps\le 2$.

The $H^1$-decompositions used in baby-Skyrme, Faddeev and chiral skyrmion models \cite{LinYang2004, LinYang2007, Melcher2014} do not provide the $H^2$-estimates needed in the concentration-compactness argument \cite{Lions1984} in the present fourth-order setting. We therefore develop a sphere-valued $H^2$ cutoff construction that yields exact degree splitting and asymptotic energy splitting. Combined with the threshold-stable topological energy barrier, this construction provides the binding inequality needed to rule out dichotomy.

The residual formulation also reveals a sharp topological distinction at the spectral threshold $H=1/4$: nonzero-degree configurations retain a positive residual energy barrier, whereas degree-zero configurations admit spreading Weyl sequences with vanishing energy.

\begin{theorem}\label{thm:threshold}
The following statements hold.
\begin{enumerate}[(i)]
\item If $0<H<1/4$, then $I_q(H)=-\infty$ for every $q \in \Z$.
\item At $H=1/4$, if $Q(\m) \neq 0$, then $E_{1/4}(\m)\ge  b_*/2$.
\item There exists a family $ (\bw_R)_{R \gg 1}\subset \cA_0$, indexed by a spreading scale $R$, such that
\[
 \liminf_{R\to\infty}\norm{\bw_R-\ein_3}_{H^2}>0,
 \qquad E_{1/4}(\bw_R)=O(R^{-2}),
 \]
 and, for every fixed radius $A>0$,
 \[
 \sup_{y\in\R^2}\int_{B_A(y)}
\bra*{ |\bw_R-\ein_3|^2+|\nabla \bw_R|^2+|D^2 \bw_R|^2 }\dif x \to 0
\quad\text{as }R\to\infty.
 \]

 \item The function $H \mapsto I_{-1}(H)$ is continuous from the coercive side at the threshold:
 \[
 \lim_{H\downarrow1/4}I_{-1}(H)
 =I_{-1}(1/4)\ge b_*/2>0.
 \]
\end{enumerate}
\end{theorem}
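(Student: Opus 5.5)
We take the four parts in order, each time working directly with the quadratic form $E_H(\m)=\frac12\int(|\xi|^4-|\xi|^2+H)|\widehat\bu|^2\,\dif\xi$ with $\bu=\m-\ein_3$, or with its rescaled residual version.

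\emph{Part (i).} For $0<H<1/4$ the symbol $(|\xi|^2-\tfrac12)^2+H-\tfrac14$ is negative near the circle $|\xi|=1/\sqrt2$. We build sphere-valued maps whose $\bu$ concentrates in frequency there. Fix a $\m_0\in\cA_q$ of finite energy, supported (up to $\ein_3$) in a unit ball. Far away from it, add a degree-zero perturbation $\m=(\sqrt{1-\delta^2|\bv|^2}\,\ein_3$-component, $\delta\bv)$, where $\bv=\phi(x/R)(\cos(x_1/\sqrt2),\sin(x_1/\sqrt2))$ and $\delta$ is small so that $|\delta\bv|\le 1/2$. The horizontal part contributes $\frac{\delta^2}{2}R^2\bigl(H-\tfrac14+O(R^{-1})\bigr)\|\phi\|_{L^2}^2$. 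The vertical correction $m_3-1=O(\delta^2|\bv|^2)$ contributes only $O(\delta^4R^2)$. Choosing $\delta$ small, fixed, the total is $\le C-c\,\delta^2R^2\to-\infty$. Since the perturbation is a degree-zero piece with disjoint support, the degree stays $q$.

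\emph{Part (ii).} At $H=1/4$ we have $\eps=2$. By the exact scaling \eqref{eq:exact}, $E_{1/4}(\m)=\tfrac12\cF_2(\n)=\tfrac14\|(\Delta+1)(\n-\ein_3)\|^2_{L^2}$. So it suffices to prove the residual barrier $\frac12\|(\Delta+1)\bu\|^2\ge b_*$ whenever $Q\neq0$, which is the key lemma announced in the introduction. Its proof goes as follows. Surjectivity gives a point $x_0$ with $\n(x_0)=-\ein_3$, so $u_3(x_0)=-2$. Set $f=(\Delta+1)u_3$. The circle mean $M(r)$ of $u_3$ around $x_0$ solves the radial Helmholtz ODE with forcing equal to the circle mean of $f$. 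Variation of constants gives
\[
M(r)=-2J_0(r)+\int_0^r K(r,s)\bar f(s)\,\dif s .
\]
Evaluating at $r=r_*$ and bounding the forcing term by Cauchy--Schwarz then yields the explicit constant. Here $r_*$ is a zero of $J_1$, so $J_0'(r_*)=0$ and $J_0(r_*)$ is extremal.

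The estimate needs a lower bound on $M(r_*)$, and I would try to get it from $u_3\ge -2$ combined with decay. This is the main obstacle. Concretely, one must control $|M(r_*)+2J_0(r_*)|^2$ via $\|f\|_{L^2(B_{r_*})}^2$ with the sharp constant $(1-J_0(r_*)^2)/(8\pi)$. A cleaner route may be to test $f$ against the radial Helmholtz solution $J_0(|x-x_0|)$ on the whole disk, or on $\R^2$ with an optimal truncation.

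\emph{Part (iii).} Use the same degree-zero construction as in (i) at $H=1/4$, with fixed amplitude $\delta$ and frequency exactly $1/\sqrt2$, but normalized so that $\|\bw_R-\ein_3\|_{H^2}\sim1$. That is, take amplitude $\delta/R$ with the profile $\phi(x/R)$ spread over a ball of radius $R$. The symbol vanishes to second order on the circle, and $\widehat\bu$ concentrates at distance $O(1/R)$ from it, so the horizontal energy is $O(R^{-2})$ relative to the norm. The vertical correction is $O(R^{-4}\cdot R^2)=O(R^{-2})$. The local $H^2$ mass on any ball $B_A$ is $O(A^2R^{-2})\to0$.

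\emph{Part (iv).} We argue through the rescaled problem. Since $I_{-1}(H)=\sqrt H\,\widehat I_{-1}(\eps)$, we need $\widehat I_{-1}(\eps)\to\widehat I_{-1}(2)$ as $\eps\uparrow2$. The infimum of the affine-in-$\eps$ functionals $\cF_\eps$ is concave and nonincreasing in $\eps$, hence upper semicontinuous. Monotonicity gives $\widehat I(\eps)\ge\widehat I(2)$. For the reverse inequality, take $\n$ with $\cF_2(\n)\le\widehat I(2)+\eta$; then $\cF_\eps(\n)\to\cF_2(\n)$. The lower bound $\ge b_*/2$ follows from (ii).
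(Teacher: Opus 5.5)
\textbf{The gap is in (ii).} Reducing via \eqref{eq:exact} to the residual barrier $\frac12\|(\Delta+1)(\n-\ein_3)\|_{L^2}^2\ge b_*$ is exactly the paper's route. That barrier is Lemma~\ref{la:south-cost}, proved before this theorem, and once it is available (ii) is one line. Your own proof of the barrier, however, stops at the decisive step, and you misdiagnose the obstacle.

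You do not need a lower bound on $M(r_*)$, nor $u_3\ge-2$, nor any decay. The sphere constraint gives $u_3=n_3-1\le0$ pointwise, hence $M(r)\le0$ for every $r$. Your representation gives $M(r_*)=2\alpha+(\text{forcing})$ with $\alpha=-J_0(r_*)>0$. What matters is the \emph{sign} $J_0(r_*)<0$ (interlacing of the zeros of $J_0$ and $J_1$), not merely $J_0'(r_*)=0$. Consequently the forcing term must be $\le-2\alpha$: the residual has to pull the circle mean of $u_3$ back from $+2\alpha$ to a nonpositive value.

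You also still need the exact kernel norm, and the constant you quote is wrong. For $G_r(t)=\frac{\pi}{2}\bigl(J_0(t)Y_0(r)-Y_0(t)J_0(r)\bigr)$ one has
$\frac{\dif}{\dif t}\bigl[\frac{t^2}{2}(G_r^2+G_r'^2)\bigr]=tG_r^2$, together with $G_r(r)=0$, $G_r'(r)=-1/r$, and $tG_r'(t)\to-J_0(r)$ as $t\to0$. Hence $\int_{B_r}G_r^2=\pi(1-J_0(r)^2)$, and
$|M(r)+2J_0(r)|^2\le\frac{1-J_0(r)^2}{4\pi}\|f\|_{L^2(B_r(x_0))}^2$.
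The Cauchy--Schwarz step is attained by $f=G_r(|\cdot-x_0|)$, so the constant $(1-J_0(r_*)^2)/(8\pi)$ you quote cannot hold. With the correct $4\pi$, the gap $2\alpha$ gives exactly $\frac12\|f\|^2\ge\frac{8\pi\alpha^2}{1-\alpha^2}=b_*$. Since $\|f\|_{L^2}\le\|\cR(\n)\|_{L^2}$, working with the third component alone is fine.

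Your fallback of testing $f$ against $J_0(|x-x_0|)$ would not work. It never sees the point value $u_3(x_0)=-2$, which enters only through the $Y_0$ part of $G_r$. It also leaves the boundary term $2\pi r_*J_0(r_*)M'(r_*)$ of unknown sign, which the condition $G_r(r)=0$ is designed to remove.

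\textbf{Parts (i), (iii) and (iv)} are essentially the paper's arguments: the helical mode $k=1/\sqrt2$ at fixed small amplitude on an expanding region, glued to a compactly supported degree-$q$ map; the amplitude-$R^{-1}$ spread profile at the endpoint; and monotonicity in $\eps$ plus a fixed near-optimal competitor.

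There is one slip in (iv). A concave nonincreasing function can drop at an endpoint, so concavity does not give upper semicontinuity at $\eps=2$. Upper semicontinuity holds because $\widehat I_{-1}$ is an infimum of affine functions of $\eps$. That is precisely the direct argument you then give, so your conclusion stands.
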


\begin{remark}
\begin{enumerate}[(1)]
\item The equality in Theorem~\ref{thm:threshold} (iv) concerns only the value of the infimum and does not imply attainment of $I_{-1}(1/4)$. Moreover, part (iii) exhibits a loss of $H^2$ compactness at the threshold that cannot be restored by translations. This loss of compactness does not by itself decide whether the endpoint infimum is attained, and the present paper does not address this question. 
\item For $0<H<1/4$, the uniform state is no longer the appropriate ground state. The natural low-field formulation instead uses a conical spiral background together with a correspondingly renormalized energy \cite{SpeightWinyard2020}. Note that Theorem~\ref{thm:threshold}(i) should not be interpreted as a nonexistence statement for skyrmions relative to the low-field background. 
\end{enumerate}
\end{remark}

The preceding theorem identifies the sharp lower boundary of the uniform-vacuum coercive regime. 
At the opposite end of this regime, Theorem~\ref{thm:existence} guarantees the existence of degree-$-1$ minimizers for every $H>1/4$, making it natural to investigate their energy, size, and profiles as $H\to\infty$, equivalently $\eps=H^{-1/2} \downarrow 0$. Recalling \eqref{eq:exact}, we write
\[
\cF_\eps(\n) = \cF_0(\n) - \eps D(\n)
\qquad\text{where }
D(\n) =\frac{1}{2}\int_{\R^2}|\nabla \n|^2 \dif x.
\]
The limiting variational problem at $\eps=0$ and the set of its minimizers are
\[
\widehat{I}_{-1}(0) = \inf \set*{\cF_0(\n), \, \n \in \cA_{-1}}, \qquad \cG=\set*{\n \in \cA_{-1}:\, \cF_0(\n)=\widehat{I}_{-1}(0)}.
\]
By Theorem~\ref{thm:existence} with $\eps=0$, the set $\cG$ is nonempty and compact in $H^2$ modulo translations. 
The following theorem establishes strong-field profile selection and the first-order energy expansion.
\begin{theorem}\label{thm:strong-field}
The supremum defining
\[
D_*:= \sup\set*{D(\n), \, \n \in \cG}
\]
is attained, and the selected set $\cG_*:=\set*{\n \in \cG: \, D(\n)=D_*}$ is nonempty. If $\m_H$ is a minimizer of $I_{-1}(H)$, then
\[
\inf_{a \in \R^2, \n \in \cG_*} \norm{\m_H(a + H^{-1/4} \cdot )- \n}_{H^2} \to 0 \quad\text{as}\quad H \to \infty,
\]
and
\[
I_{-1}(H) = \sqrt{H} \widehat{I}_{-1}(0) - D_* + o(1).
\]
\end{theorem}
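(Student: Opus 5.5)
The plan is a standard $\Gamma$-convergence / first-order selection argument in the rescaled variables, where $\eps=H^{-1/2}\downarrow 0$ and $\n_\eps(x)=\m_H(H^{-1/4}x)$ minimizes $\cF_\eps$ over $\cA_{-1}$ by \eqref{eq:exact}.

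\textbf{Step 1: $D_*$ is attained.} Theorem~\ref{thm:existence} with $\eps=0$ says every minimizing sequence of $\widehat I_{-1}(0)$ is precompact in $H^2$ modulo translations. Take $\n_k\in\cG$ with $D(\n_k)\to D_*$. This is a minimizing sequence for $\cF_0$, so after translations it converges in $H^2$ to some $\n_*\in\cA_{-1}$ with $\cF_0(\n_*)=\widehat I_{-1}(0)$, i.e. $\n_*\in\cG$. Since $D$ is continuous in $H^2$ and translation invariant, $D(\n_*)=D_*$. In particular $D_*<\infty$, because $D\le C\norm{\n-\ein_3}_{H^2}^2$ and $\cG$ is bounded in $H^2$ by coercivity of $\cF_0$.

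\textbf{Step 2: two-sided energy bounds.} For the upper bound, test $\widehat I_{-1}(\eps)$ with any $\n\in\cG_*$:
\[
\widehat I_{-1}(\eps)\le \widehat I_{-1}(0)-\eps D_*.
\]
For the lower bound, use $\cF_0(\n_\eps)\ge \widehat I_{-1}(0)$:
\[
\widehat I_{-1}(\eps)=\cF_0(\n_\eps)-\eps D(\n_\eps)\ge \widehat I_{-1}(0)-\eps D(\n_\eps).
\]
Together these give
\[
0\le \cF_0(\n_\eps)-\widehat I_{-1}(0)\le \eps\bigl(D(\n_\eps)-D_*\bigr),
\qquad D(\n_\eps)\ge D_*.
\]
For a uniform bound on $D(\n_\eps)$, note that for $\eps\le 1$ coercivity of $\cF_1$ (Theorem~\ref{thm:existence}, with $\cF_\eps\ge \frac{2-\eps}{2}\cdot$const$\cdot\norm{\cdot}_{H^2}^2$ from Plancherel) together with $\cF_\eps(\n_\eps)\le \widehat I_{-1}(0)$ bounds $\norm{\n_\eps-\ein_3}_{H^2}$, and hence $D(\n_\eps)$, uniformly in $\eps$. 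It follows that $\cF_0(\n_\eps)\to\widehat I_{-1}(0)$, so $(\n_\eps)$ is a minimizing sequence for the $\eps=0$ problem.

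\textbf{Step 3: compactness and selection.} Argue by contradiction along any sequence $\eps_k\to0$. By Theorem~\ref{thm:existence} at $\eps=0$, a subsequence of translates $\n_{\eps_k}(a_k+\cdot)$ converges in $H^2$ to some $\n_0\in\cG$. Continuity of $D$ gives $D(\n_0)=\lim D(\n_{\eps_k})\ge D_*$, and since $\n_0\in\cG$ also $D(\n_0)\le D_*$, so $\n_0\in\cG_*$. This yields the distance statement:
\[
\inf_{a,\n\in\cG_*}\norm{\n_\eps(a+\cdot)-\n}_{H^2}\to0.
\]
Recalling that $\n_\eps(x)=\m_H(H^{-1/4}x)$, this is exactly the claimed convergence $\m_H(a+H^{-1/4}\cdot)\to\cG_*$ modulo translations. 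Every subsequence also has $D(\n_\eps)\to D_*$, so the full family does.

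\textbf{Step 4: energy expansion.} Combining the bounds of Step 2,
\[
\widehat I_{-1}(0)-\eps D(\n_\eps)\le \widehat I_{-1}(\eps)\le \widehat I_{-1}(0)-\eps D_*,
\]
hence $\widehat I_{-1}(\eps)=\widehat I_{-1}(0)-\eps D_*+o(\eps)$. Multiplying by $\sqrt H=\eps^{-1}$ gives
\[
I_{-1}(H)=\sqrt H\,\widehat I_{-1}(0)-D_*+o(1).
\]

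\textbf{Main obstacle.} The delicate point is Step 3: it depends on the $\eps=0$ compactness of Theorem~\ref{thm:existence} applying to the family $(\n_\eps)$, which is minimizing for $\cF_0$ but consists of minimizers of different functionals. This is legitimate because that theorem covers arbitrary minimizing sequences. The other ingredient there is the uniform $H^2$ bound of Step 2, which keeps $D(\n_\eps)$ bounded and so makes the sandwich inequality conclusive.
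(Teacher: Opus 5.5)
Your proposal is correct and follows essentially the same route as the paper. It attains $D_*$ via $\eps=0$ compactness (Lemma~\ref{lem:Gstar}), uses the selection sandwich $0\le \cF_0(\n_\eps)-\widehat I_{-1}(0)\le \eps\bigl(D(\n_\eps)-D_*\bigr)$, obtains compactness because $(\n_\eps)$ is an $\cF_0$-minimizing sequence, and rescales back via \eqref{eq:exact}. The differences are cosmetic: you get the $\cF_0$-minimizing property from the sandwich plus a uniform $H^2$ bound rather than from $\cF_0(\n_\eps)\le \widehat I_{-1}(\eps)/(1-\eps/2)$, and you place limits in $\cG_*$ directly from $D(\n_\eps)\ge D_*$ instead of first proving $D(\n_\eps)\to D_*$ by contradiction. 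The coercivity you invoke comes from \eqref{eq:F0-comparison}, not from Theorem~\ref{thm:existence}.
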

Thus the leading-order strong-field profiles are governed by the limiting functional $\cF_0$, rather than by the Dirichlet energy. This differs from the conformal strong-field limits of DMI models, in which the Dirichlet energy determines the leading-order behavior and shrinking harmonic maps, hence Belavin--Polyakov profiles, emerge in the limit \cite{DoringMelcher2017,BernandMantelMuratovSimon2021, KomineasMelcherVenakides2020,GustafsonWang2021}. 

Beyond profile selection, quantized concentration of topological charge densities has also been established for continuum and lattice baby Skyrme type energies in a distinct strong-anisotropy regime \cite{Briani2026}. 
In the present model, returning to the original spatial scale, the strong profile convergence, combined with a Pohozaev identity, yields concentration of the topological charge and energy and identifies the characteristic core scale $H^{-1/4}$.

\begin{theorem}\label{thm:concentration}
Let $H_k\to\infty$ and let $\m_{H_k}$ be a minimizer of $I_{-1}(H_k)$. After passing to a subsequence, there exist centers $a_k\in\R^2$ and a profile $\n_*\in\cG_*$ such that
\[
 \m_{H_k}(a_k+H_k^{-1/4}\,\cdot)-\ein_3  \longrightarrow \n_*-\ein_3  \quad\text{strongly in }H^2(\R^2).
\]
With $\mathscr{T}_a \nu = (x \mapsto x-a)_{\#}\nu$, we have
\[
\mathscr{T}_{a_k} \bra*{q(\m_{H_k})\dif x} \stackrel{*}{\rightharpoonup} -4\pi \delta_0.
\]
Moreover, the fourth-order and potential energy contributions concentrate separately:
\begin{align*}
\mathscr{T}_{a_k}\bra*{\frac{|\Delta \m_{H_k}|^2}{2\sqrt{H_k}}\dif x}
 &\stackrel{*}{\rightharpoonup}\frac{\widehat{I}_{-1}(0)}{2}\delta_0,\\
\mathscr{T}_{a_k}\bra*{\sqrt{H_k}(1-m_{H_k,3})\dif x }
 &\stackrel{*}{\rightharpoonup} \frac{\widehat{I}_{-1}(0)}{2}\delta_0.
\end{align*}
The full energy density satisfies
\[
\mathscr{T}_{a_k}\bra*{ \frac{1}{\sqrt{H_k}}\pra*{\frac{1}{2} |\Delta\m_{H_k}|^2 -\frac{1}{2}|\nabla \m_{H_k}|^2 + H_k(1-m_{H_k,3})}\dif x }
 \stackrel{*}{\rightharpoonup} \widehat{I}_{-1}(0) \delta_0.
\]

For $\lambda\in(0,1)$, define the potential-energy concentration radius at level $\lambda$ by
\[
 R_{k,\lambda} :=
 \inf\set*{r>0:  \int_{B_r(a_k)}(1-m_{H_k,3})\dif x  \ge  \lambda\int_{\R^2}(1-m_{H_k,3})\dif x }.
\]
Then there exist $0<c_\lambda<C_\lambda<\infty$ such that
\[
 c_\lambda H_k^{-1/4}
 \le R_{k,\lambda}
 \le C_\lambda H_k^{-1/4}
\]
for all sufficiently large $k$.
\end{theorem}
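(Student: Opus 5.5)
The plan is to derive everything from Theorem~\ref{thm:strong-field}, and then obtain the radius bounds from an explicit change of variables. Write $\n_k(y):=\m_{H_k}(a_k+H_k^{-1/4}y)$ and $\eps_k=H_k^{-1/2}$.

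\textbf{Step 1: strong convergence of the rescaled profiles.} By Theorem~\ref{thm:strong-field} there are $a_k$ and $\tilde\n_k\in\cG_*$ with $\norm{\n_k-\tilde\n_k}_{H^2}\to0$. Since $\cG_*$ is a closed subset of $\cG$ and $\cG$ is compact in $H^2$ modulo translations (Theorem~\ref{thm:existence} at $\eps=0$), we may absorb a further translation into $a_k$. Passing to a subsequence then gives $\tilde\n_k\to\n_*\in\cG_*$ in $H^2$. Closedness of $\cG_*$ follows because $D$ and $\cF_0$ are continuous in $H^2$. This yields the first claim.

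\textbf{Step 2: change of variables for the densities.} For a test function $\varphi\in C_c(\R^2)$ we substitute $x=a_k+H_k^{-1/4}y$ and use the scaling relations
\[
\frac{|\Delta\m|^2}{2\sqrt{H_k}}\dif x=\tfrac12|\Delta\n_k|^2\dif y,\qquad \sqrt{H_k}(1-m_3)\dif x=(1-n_{k,3})\dif y,\qquad q(\m)\dif x=q(\n_k)\dif y.
\]
Hence each pushed-forward measure tested against $\varphi$ equals $\int \varphi(H_k^{-1/4}y)\rho_k(y)\dif y$, where $\rho_k$ is the corresponding density of $\n_k$. By Step 1, $\rho_k\to\rho_*$ in $L^1$: for $q$ this uses $\nabla\n_k\to\nabla\n_*$ in $L^2$ together with $L^\infty$ bounds, and for $|\n_k-\ein_3|^2$ the convergence is clear. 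Since $\varphi(H_k^{-1/4}y)\to\varphi(0)$ boundedly and pointwise, the limit of each measure is $\varphi(0)\int\rho_*$. This gives $-4\pi\delta_0$ for the charge, since $Q(\n_*)=-1$. For the remaining two densities the masses are $\frac12\int|\Delta\n_*|^2$ and $\frac12\int|\n_*-\ein_3|^2$.

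The Dirichlet part contributes $\frac{1}{\sqrt{H_k}}\frac12|\nabla\m|^2\dif x=\eps_k\frac12|\nabla\n_k|^2\dif y$, whose total mass $\eps_k D(\n_k)\to0$. Therefore the full energy density converges to the sum of the other two limits, which is $\cF_0(\n_*)\delta_0=\widehat I_{-1}(0)\delta_0$.

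\textbf{Step 3: equipartition via a Pohozaev identity.} It remains to show that the two masses from Step 2 are equal. Since $\n_*$ minimizes $\cF_0$ in $\cA_{-1}$ and dilations preserve the degree, the map $R\mapsto\cF_0(\n_*(\cdot/R))=\frac{1}{2R^2}\int|\Delta\n_*|^2+\frac{R^2}{2}\int|\n_*-\ein_3|^2$ is minimized at $R=1$. Differentiating at $R=1$ gives $\int|\Delta\n_*|^2=\int|\n_*-\ein_3|^2$, so each mass equals $\widehat I_{-1}(0)/2$. This step is the one that must be checked carefully: one has to confirm that the dilated maps stay in $\cA_{-1}$ and that the scaling exponents are exactly as stated. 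Both points are routine.

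\textbf{Step 4: the concentration radius.} Scaling gives
\[
\int_{B_r(a_k)}(1-m_{H_k,3})\dif x=H_k^{-1/2}\int_{B_{rH_k^{1/4}}(0)}(1-n_{k,3})\dif y,
\]
so $R_{k,\lambda}=H_k^{-1/4}\rho_{k,\lambda}$, where $\rho_{k,\lambda}$ is the level-$\lambda$ radius for the densities $f_k=1-n_{k,3}$. These densities converge in $L^1$ to $f_*=1-n_{*,3}$, and $\int f_*>0$.

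For the upper bound, choose $\rho_1$ with $\int_{B_{\rho_1}}f_*\ge\frac{1+\lambda}{2}\int f_*$. By $L^1$ convergence, $\rho_{k,\lambda}\le\rho_1$ for large $k$.

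For the lower bound, choose $\rho_0>0$ with $\int_{B_{\rho_0}}f_*\le\frac{\lambda}{2}\int f_*$; this is possible by absolute continuity of the integral. Again by $L^1$ convergence, $\rho_{k,\lambda}\ge\rho_0$ for large $k$.

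Since the statement holds along the chosen subsequence, this completes the proof.
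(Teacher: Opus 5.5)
Your proof is correct and follows essentially the same route as the paper: subsequential $H^2$ convergence of the rescaled profiles into $\cG_*$, the $L^1$-collapse principle after the change of variables $x=a_k+H_k^{-1/4}y$, a Derrick-type scaling argument for equipartition, and $L^1$-stability of the level-$\lambda$ radius. The differences are minor. You obtain the profile from Theorem~\ref{thm:strong-field} together with compactness of $\cG$ modulo translations, whereas the paper cites Proposition~\ref{prop:profile-compactness} and Theorem~\ref{thm:first-order}. You also apply the dilation argument directly to the limiting $\cF_0$-minimizer $\n_*$, instead of deriving the Pohozaev identity $\int|\Delta\m_H|^2=2H\int(1-m_{H,3})$ for each $\m_{H_k}$ and passing to the limit; this is slightly shorter but does not record that exact balance at finite $H$.
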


The paper is organized as follows.  In Section \ref{sec:existence-proof}, we prove existence and compactness of minimizers by concentration-compactness, using a sphere-valued $H^2$ decomposition and a threshold-stable binding inequality.  In Section \ref{sec:threshold}, we analyze the coercivity threshold.  In Section \ref{sec:strong-field}, we develop the strong-field asymptotic theory, including the variational limit, profile selection, Pohozaev balance, concentration measures, and the core-scale estimate.

\section{Existence via concentration-compactness}\label{sec:existence-proof}

We prove Theorem~\ref{thm:existence} by concentration-compactness. The proof has three main ingredients. First, the residual decomposition yields coercivity locally uniformly for $\eps\in[0,2)$ and reduces the argument to excluding vanishing and dichotomy. Second, a sphere-valued $H^2$ cutoff construction gives exact degree splitting and asymptotic energy splitting. Third, a uniform positive binding gap excludes splittings into two nonzero-degree components, while coercivity eliminates nontrivial degree-zero remainders. These ingredients yield compactness modulo translations and attainment of the infimum.

Expanding the square and integrating by parts, we obtain
\begin{equation}\label{eq:Feps-residual}
\cF_\eps(\n)=\frac{1}{2} \norm{(\Delta+1)(\n-\ein_3)}^2_{L^2(\R^2)} + \bra*{1-\frac{\eps}{2}}\norm{\nabla \n}^2_{L^2(\R^2)}
\end{equation}
Moreover, Young's inequality gives
\begin{equation}\label{eq:interpolation}
\norm{\nabla \n}^2_{L^2(\R^2)} 
\le \norm{\n-\ein_3}_{L^2} \norm{\Delta \n}_{L^2} \le \frac{1}{2} \bra*{\norm{\n-\ein_3}_{L^2}^2 + \norm{\Delta \n}_{L^2}^2} = \cF_0(\n).
\end{equation}
Hence,
\begin{equation}\label{eq:F0-comparison}
\bra*{1-\frac{\eps}{2}} \cF_0(\n) \le \cF_\eps(\n) \le \cF_0(\n), \quad 0 \le \eps \le 2.
\end{equation}
In particular, $\cF_\eps$ is uniformly $H^2$-coercive for $\eps$ ranging over any compact subset of $[0,2)$. 
Fix $\eps \in [0,2)$ and let $(\n_k)_{k \in \N}\subset \cA_{-1}$ be a minimizing sequence for $\widehat{I}_{-1}(\eps)$. By coercivity \eqref{eq:F0-comparison}, $(\n_k-\ein_3)$ is bounded in $H^2(\R^2;\R^3)$. Hence, after passing to a subsequence, there exists some $\n_* \in \cA$ such that $\n_k-\ein_3\rightharpoonup \n_*-\ein_3$ weakly in $H^2$. Local Rellich compactness shows that the sphere constraint passes to weak $H^2$ limits. Weak lower semicontinuity of $\cF_\eps$ follows from the residual representation \eqref{eq:Feps-residual}, and thus
\[
\cF_\eps(\n_*) \le \liminf_{k \to \infty} \cF_\eps(\n_k) = \widehat{I}_{-1}(\eps).
\]
Consequently, the direct method would be complete once one proves that $Q(\n_*)=-1$. The possible loss of topology under weak convergence is addressed by the concentration-compactness argument.

For the minimizing sequence $(\n_k)_{k \in \N} \subset \cA_{-1}$, we consider the $H^2$-mass density associated with $\n_k - \ein_3$
\begin{equation}\label{eq:rho-density}
\rho_k = |\n_k - \ein_3|^2 + |\nabla \n_k|^2 + |D^2 \n_k|^2 \in L^1(\R^2).
\end{equation}
After passing to a further subsequence, the total mass converges:
\[
\int_{\R^2} \rho_k(x) \dif x \longrightarrow M,
\]
for some constant $M> 0$. Indeed, the classical topological lower bound gives
\[
 \int_{\R^2} |\nabla \n_k|^2 \dif x \ge 8\pi |Q(\n_k)| = 8\pi,
\]
and therefore $M \ge 8\pi>0$. We write $B_R(x)=\set{y \in \R^2, \, |x-y| <R}$ for the open disk of radius $R>0$ centered at $x \in \R^2$, and $B_R=B_R(0)$. Then, according to Lions' concentration-compactness lemma \cite{Lions1984}, one of the following three situations must occur for a subsequence of $(\rho_k)_{k \in \N}$: 

\begin{enumerate}[(a)]
\item Compactness: There exists a sequence $(x_k)_{k \in \N} \subset \R^2$ such that, for every $\eta>0$, there is a radius $R>0$ such that
\[
\int_{B_R(x_k)} \rho_k(x) \dif x \ge M- \eta
\]
for all sufficiently large $k$.
\item Vanishing: For every $R>0$
\[
\lim_{k \to \infty} \bra*{\sup_{y \in \R^2} \int_{B_R(y)} \rho_k(x) \dif x } = 0.
\]
\item Dichotomy: there is $\lambda \in (0,M)$ such that, for every $\eta>0$, one can find radius $R>0$, centers $x_k \in \R^2$, and radii $S_k \to \infty$ such that 
\[
\abs*{\int_{B_R(x_k)} \rho_k \dif x - \lambda} < \eta, \qquad
\abs*{\int_{\R^2 \setminus B_{S_k}(x_k)} \rho_k \dif x - (M-\lambda)} < \eta
\]
and
\[
\int_{B_{S_k}(x_k) \setminus B_R(x_k)} \rho_k \dif x < \eta
\]
for all sufficiently large $k$.
\end{enumerate}
Vanishing can be excluded directly. Indeed, the local Sobolev estimate
\[
\norm{\n_k - \ein_3}_{L^\infty(B_1(y))} \le C \norm{\n_k - \ein_3}_{H^2(B_2(y))}, 
\]
where $C$ is independent of $k$ and $y$, shows that $\n_k\to \ein_3$ uniformly on $\R^2$. Hence, for sufficiently large $k$, the image of $\n_k$ is contained in the northern hemisphere, which implies $Q(\n_k)=0$, contradicting the assumption $Q(\n_k)=-1$.
This direct exclusion of vanishing is a benefit of the local $H^2$ control $H^2_{\mathrm{loc}}(\R^2)\hookrightarrow L^\infty_{\mathrm{loc}}(\R^2)$. It remains to exclude dichotomy.

\subsection{A sphere-valued \texorpdfstring{$H^2$}{H2} decomposition}\label{sec:decomposition}

The overall splitting strategy is adapted from the annular modification introduced by Lin and Yang for the two-dimensional Skyrme model \cite[Lemma~6.1]{LinYang2004}; see also \cite[Lemma~5.1 and Theorem~5.2]{LinYang2007}. In the present fourth-order setting, however, the cutoff must be
controlled in $H^2$, including through the sphere projection. We first select a fixed-width annulus of vanishing $H^2$-mass, then cut the inner and outer clusters off to $\ein_3$ through sphere-valued maps, and finally verify exact degree splitting and asymptotic energy splitting.

\begin{lemma}\label{la:small-annulus}
Suppose that the density sequence $(\rho_k)_{k \in \N}$ defined in \eqref{eq:rho-density} satisfies the dichotomy alternative. After translating the sequence and passing to a subsequence, not relabeled, there exist $r_k \to \infty$ and $\beta >0$ such that
\begin{equation}\label{eq:annulus-small}
\int_{B_{r_k+3} \setminus B_{r_k-2}} \rho_k \dif x \longrightarrow 0,
\end{equation}
while
\begin{equation}\label{eq:two-positive-masses}
\int_{B_{r_k-2}} \rho_k \dif x \ge \beta, \qquad 
\int_{\R^2 \setminus B_{r_k+3}} \rho_k \dif x \ge \beta.
\end{equation}
\end{lemma}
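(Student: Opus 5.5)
The plan is to extract the annulus from the dichotomy alternative by a diagonal argument over $\eta=1/j$, followed by a pigeonhole selection of a fixed-width sub-annulus. Take $\lambda\in(0,M)$ from dichotomy, and fix $\beta:=\frac12\min\{\lambda,M-\lambda\}>0$. For each $j\in\N$, apply the dichotomy with $\eta_j=\min\{1/j,\beta/2\}$. This gives a radius $R_j$, centers $x_k^{(j)}$, radii $S_k^{(j)}\to\infty$, and an index $K_j$ such that for $k\ge K_j$:
\begin{itemize}
\item the mass in $B_{R_j}(x_k^{(j)})$ is within $\eta_j$ of $\lambda$;
\item the mass outside $B_{S_k^{(j)}}(x_k^{(j)})$ is within $\eta_j$ of $M-\lambda$;
\item the annulus $B_{S_k^{(j)}}\setminus B_{R_j}$ carries mass less than $\eta_j$.
\end{itemize}
We may increase $K_j$ so that $K_j$ is strictly increasing and $S_k^{(j)}\ge R_j+ 2j(5)+10$ for $k\ge K_j$. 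The total mass is eventually close to $M$, say $\int\rho_k\le M+1$.

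Next we select a subsequence. Set $k_j:=K_j$, translate $\n_{k_j}$ by $x_{k_j}^{(j)}$ so the centers become $0$, and relabel. The annulus $A_j:=B_{S_{j}}\setminus B_{R_j}$ (with $S_j:=S^{(j)}_{k_j}$) then has mass $<\eta_j\to0$ and radial width at least $10j$. We do not even need to subdivide: take $r_j:=R_j+2+j$. Then $B_{r_j+3}\setminus B_{r_j-2}\subset A_j$, because $r_j-2\ge R_j$ and $r_j+3\le S_j$. This gives \eqref{eq:annulus-small}, and $r_j\ge j\to\infty$.

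For the two positive masses, note that $B_{r_j-2}\supset B_{R_j}$, which contains mass at least $\lambda-\eta_j\ge\lambda-\beta/2\ge\beta$. Also $\R^2\setminus B_{r_j+3}\supset\R^2\setminus B_{S_j}$, which contains mass at least $M-\lambda-\eta_j\ge\beta$. This gives \eqref{eq:two-positive-masses}. Translation invariance of $\rho_k$ (and of the energy and degree) justifies recentering.

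The only delicate point is bookkeeping in the diagonal argument. The dichotomy statement provides different $R$, $x_k$, $S_k$ for each $\eta$. We must therefore pick, for each $j$, a single index $k_j$ at which the $j$-th configuration is valid and $S_{k_j}^{(j)}$ is already large compared with $R_j+j$. This is possible since $S_k^{(j)}\to\infty$ as $k\to\infty$ for fixed $j$. If one prefers a version where a genuine pigeonhole is needed (e.g. if only $S_k\to\infty$ without a quantitative gap), one splits the wide annulus into $\sim j$ disjoint annuli of width $5$ and picks the one of least mass, which is at most $\eta_j/j$. Either way the conclusion follows along the subsequence $(\n_{k_j})$.
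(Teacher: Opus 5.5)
Your proposal is correct and follows essentially the same route as the paper. Both arguments choose diagonal indices $k_j$ with $S_{j,k_j}-R_j$ growing in $j$, recenter at $x_{j,k_j}$, and place the fixed-width annulus $B_{r_j+3}\setminus B_{r_j-2}$ inside the low-mass annulus $B_{S_j}\setminus B_{R_j}$. The paper takes $r_j=\frac12(R_j+S_j)$ and $\beta=\frac14\min\{\lambda,M-\lambda\}$, whereas you take $r_j=R_j+j+2$ and $\beta=\frac12\min\{\lambda,M-\lambda\}$; these choices work equally well, and your closing pigeonhole remark is unnecessary.
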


\begin{proof}
Let $\lambda \in (0,M)$ be the mass parameter in the dichotomy alternative and set
\[
\beta = \frac{1}{4} \min\set*{\lambda, M-\lambda}>0.
\]
Choose $\eta_j \to 0$ with $\eta_j \in (0,\beta]$. For each $j$, dichotomy provides an inner radius $R_j$, centers $x_{j,k}$, and outer radii $S_{j,k} \to \infty$ such that, for all sufficiently large $k$,
\[
 \int_{B_{S_{j,k}}(x_{j,k})\setminus B_{R_j}(x_{j,k})} \rho_k\dif x < \eta_j
\]
and
\[
 \int_{B_{R_j}(x_{j,k})}\rho_k\dif x > \lambda-\eta_j,\qquad 
 \int_{\R^2\setminus B_{S_{j,k}}(x_{j,k})}\rho_k\dif x > M-\lambda-\eta_j.
\]
Choose successively $k_j > k_{j-1}$ so large that the above inequalities hold and $S_{j,k_j} -R_j \ge 2j+6$. For each $j$, translate the diagonal subsequence by the corresponding centers $x_{j,k_j}$, without changing notation. Set
\[
S_j = S_{j,k_j}, \qquad r_j = \frac{1}{2}(R_j + S_j).
\]
Then $r_j \ge j+3$, and hence $r_j \to \infty$. The gap $S_j - R_j \ge 2j+6$ implies
\[
R_j \le r_j -2 < r_j + 3 \le S_j,
\]
for all sufficiently large $j$. Consequently,
\[
\int_{B_{r_j+3} \setminus B_{r_j-2}} \rho_{k_j} \dif x \le \int_{B_{S_j} \setminus B_{R_j}} \rho_{k_j} \dif x < \eta_j \longrightarrow 0,
\]
while the two complementary regions satisfy
\[
\int_{B_{r_j-2}} \rho_{k_j} \dif x \ge \int_{B_{R_j}} \rho_{k_j} \dif x > \lambda - \eta_j > \beta,
\]
and
\[
\int_{\R^2 \setminus B_{r_j+3}} \rho_{k_j} \dif x \ge \int_{\R^2 \setminus B_{S_j}} \rho_{k_j} \dif x > M-\lambda - \eta_j > \beta.
\]
\end{proof}

The separating annulus identifies a region where an annular cutoff can be used to separate the inner and outer parts of the map. Its extra width accommodates both the actual transition region and the buffered neighborhood needed for the $H^2$ estimates. The next lemma implements this cutoff and controls its $H^2$ cost on that buffered neighborhood.

\begin{lemma}\label{la:projection}
There exist constants $C, \eta_0>0$, independent of $r$ and $\n$, such that whenever $r \ge 3$ and  $\n \in \cA$ satisfy
\begin{equation}\label{eq:cutoff-smallness}
\eta:=\int_{B_{r+3} \setminus B_{r-2}} \bra*{|\n-\ein_3|^2 + |\nabla \n|^2 +|D^2 \n|^2} \dif x \le \eta_0,
\end{equation}
there exist $\ni, \no \in \cA$ such that
\[
\ni = \left\{ \begin{array}{l} \n \quad\text{on}\quad B_r, \\ \ein_3 \quad\text{on}\quad \R^2 \setminus B_{r+1}, \end{array} \right. \qquad
\no = \left\{ \begin{array}{l} \ein_3 \quad\text{on}\quad B_r, \\ \n \quad\text{on}\quad \R^2 \setminus B_{r+1}. \end{array} \right.
\]
Moreover, with $A_r := B_{r+2} \setminus B_{r-1}$,
\begin{equation}\label{eq:cutoff-H2-bound}
\norm{\ni - \ein_3}^2_{H^2(A_r)} + \norm{\no - \ein_3}^2_{H^2(A_r)} \le C\eta.
\end{equation}
\end{lemma}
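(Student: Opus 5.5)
We build $\ni$ and $\no$ by cutting $\n-\ein_3$ off with smooth radial functions and then projecting back onto the sphere. The smallness \eqref{eq:cutoff-smallness} guarantees that on the annulus the map stays uniformly close to $\ein_3$, so the projection is well defined and smooth there.

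\textbf{Step 1: uniform closeness on the annulus.} Cover $B_{r+3}\setminus B_{r-2}$ by unit disks $B_1(y)$ with $B_2(y)\subset B_{r+3}\setminus B_{r-2}$. This is possible for $|y|\in[r-1,r+2]$, hence on all of $A_r$ after slightly enlarging; alternatively use half-disks and a Sobolev extension on Lipschitz pieces with uniform constants. The local embedding $H^2(B_2(y))\hookrightarrow L^\infty(B_1(y))$, with constant independent of $y$ and $r$ by translation invariance, then gives
\[
\norm{\n-\ein_3}_{L^\infty(A_r')}\le C\eta^{1/2}
\]
on a slightly enlarged annulus $A_r'\supset \overline{A_r}$. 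Choosing $\eta_0$ small forces $|\n-\ein_3|\le 1/2$ there. In particular, for any $t\in[0,1]$ we have $|\ein_3+t(\n-\ein_3)|\ge 1/2$.

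\textbf{Step 2: definition of the cutoffs.} Fix a smooth $\chi:\R\to[0,1]$ with $\chi=1$ on $(-\infty,0]$, $\chi=0$ on $[1,\infty)$, and set $\varphi_r(x)=\chi(|x|-r)$. Its derivatives up to order two are bounded independently of $r\ge3$, since $|x|\ge r-1\ge2$ on its transition region. Put $\bu=\n-\ein_3$ and define
\[
\ni=\frac{\ein_3+\varphi_r\bu}{|\ein_3+\varphi_r\bu|},\qquad \no=\frac{\ein_3+(1-\varphi_r)\bu}{|\ein_3+(1-\varphi_r)\bu|}.
\]
Outside $B_{r+1}\setminus B_r$ these formulas reduce to $\n$ or $\ein_3$, as required, using $|\n|=1$. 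Inside that region the denominators are at least $1/2$ by Step 1. Hence both maps are $\St$-valued and equal $\ein_3$ far away.

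\textbf{Step 3: the $H^2$ estimate.} Set $P(v)=v/|v|$, which is smooth with bounded derivatives of all orders on $\{|v|\ge1/2\}$. Write $\ni-\ein_3=P(\ein_3+\bv)-P(\ein_3)$ with $\bv=\varphi_r\bu$. Then:
\begin{itemize}
\item $|\ni-\ein_3|\le C|\bv|$;
\item $|\nabla \ni|\le C|\nabla\bv|$;
\item $|D^2\ni|\le C(|D^2\bv|+|\nabla\bv|^2)$.
\end{itemize}
The Leibniz rule with bounded $\varphi_r$-derivatives gives $|\bv|+|\nabla\bv|+|D^2\bv|\le C(|\bu|+|\nabla\bu|+|D^2\bu|)$. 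The one genuinely nonlinear term is $\int_{A_r}|\nabla\bv|^4$. We control it by the Gagliardo–Nirenberg/Ladyzhenskaya inequality on the unit disks of the cover, $\norm{f}_{L^4(B_1)}^2\le C\norm{f}_{L^2(B_2)}\norm{f}_{H^1(B_2)}$, applied to $f=\nabla\bv$ and summed over a cover with bounded overlap. This yields
\[
\int_{A_r}|\nabla\bv|^4\le C\eta^2\le C\eta_0\,\eta .
\]
The same argument applies to $\no$ with $1-\varphi_r$.

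Globally $\ni-\ein_3\in H^2$: it vanishes outside $B_{r+1}$ and coincides with $\bu$ on $B_r$. The same holds for $\no$. Therefore $\ni,\no\in\cA$.

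The main obstacle is the uniformity in $r$ of both the Sobolev embedding and the $L^4$ interpolation on the annulus. Working on a fixed family of unit disks covering $A_r$ inside the enlarged annulus of \eqref{eq:cutoff-smallness} resolves this, because all constants are then those of a single reference disk.
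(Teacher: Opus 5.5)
Your proposal is correct and follows essentially the paper's proof: the same normalized cutoff maps $(\ein_3+\chi_r\bu)/|\ein_3+\chi_r\bu|$, the same $L^\infty$ smallness on the annulus to keep the denominators at least $1/2$, and the same chain-rule bounds, in which the quartic term $|\nabla\bv|^4$ is the only nonlinear cost. The one difference is in how the estimates are localized: the paper uses a single cutoff $\zeta_r\in C_c^\infty(B_{r+3}\setminus B_{r-2})$ equal to $1$ on $A_r$ together with the global embeddings $H^2(\R^2)\hookrightarrow L^\infty$ and $H^2(\R^2)\hookrightarrow W^{1,4}$, which avoids your disk-covering bookkeeping (there the centers should satisfy $|y|\in[r,r+1]$, not $[r-1,r+2]$; with that choice the unit disks already cover $A_r$, so no enlargement is needed).
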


\begin{proof}
Choose a smooth radial function $\chi_r: \R^2 \to [0,1]$ such that $\chi_r=1$ on $B_r$ and $\chi_r=0$ on $\R^2 \setminus B_{r+1}$, with first and second derivatives bounded uniformly in $r$. Denote $\bu=\n-\ein_3$ and set
\begin{equation}\label{eq:cutoff-map}
\ni = \frac{\ein_3 + \chi_r \bu}{|\ein_3 + \chi_r \bu|}, \qquad 
\no = \frac{\ein_3 + (1-\chi_r) \bu}{|\ein_3 + (1-\chi_r) \bu|}.
\end{equation}
We first derive the pointwise control needed to justify these definitions. 
Take $\zeta_r \in C_c^\infty(B_{r+3} \setminus B_{r-2})$ equal 1 on $A_r$, with first and second derivatives bounded uniformly in $r$. The product rule and \eqref{eq:cutoff-smallness} imply
\begin{equation}\label{eq:zeta-u}
\norm{\zeta_r \bu}_{H^2(\R^2)} \le C \eta^{1/2}
\end{equation}
By the embedding $H^2(\R^2) \hookrightarrow L^\infty(\R^2)$, 
\[
\norm{\bu}_{L^\infty(A_r)} \le \norm{\zeta_r \bu}_{L^\infty(\R^2)} \le C \norm{\zeta_r \bu}_{H^2(\R^2)} \le C \eta^{1/2}.
\]
Since $B_{r+1}\setminus B_r\subset A_r$, reducing $\eta_0$ ensures that the denominators in \eqref{eq:cutoff-map} are at least $1/2$ on the transition annulus $B_{r+1}\setminus B_r$. Outside this annulus, the vectors being normalized are either $\n$ or $\ein_3$, and hence have unit length. Thus \eqref{eq:cutoff-map} is well-defined on the whole plane. 

We next estimate the $H^2$ cost of projections on $A_r$. Let
\[
P(\bv) = \frac{\ein_3 + \bv}{|\ein_3 + \bv|} - \ein_3.
\]
On $|\bv| \le 1/2$, $P$ is smooth and
\[
|P(\bv)| \le C|\bv|, \quad |\nabla(P(\bv))| \le C|\nabla \bv|, \quad 
|D^2(P(\bv))| \le C(|D^2 \bv|+|\nabla \bv|^2).
\]
For $\bv = \chi_r \bu$ or $\bv = (1-\chi_r)\bu$, the product rule, the uniform derivative bounds
for $\chi_r$, $A_r \subset B_{r+3} \setminus B_{r-2}$ and \eqref{eq:cutoff-smallness} yield
\[
\norm{\bv}_{H^2(A_r)} \le C \eta^{1/2}.
\]
Using the same cutoff $\zeta_r$ and \eqref{eq:zeta-u}, we obtain
\[
\norm{\zeta_r \bv}_{H^2(\R^2)}  \le C \norm{\zeta_r \bu}_{H^2(\R^2)} \le C \eta^{1/2}.
\]
The Sobolev embedding $H^2(\R^2) \hookrightarrow W^{1,4}(\R^2)$ therefore implies
\[
\norm{|\nabla \bv|^2}_{L^2(A_r)} = \norm{\nabla \bv}_{L^4(A_r)}^2 \le C \norm{\zeta_r \bv}_{H^2(\R^2)}^2 \le C \eta.
\]
Combining these bounds and taking $\eta_0 \le 1$, we get
\[
\norm{P(\bv)}_{H^2(A_r)} \le C(\eta^{1/2}+\eta) \le C \eta^{1/2}
\]
and thus \eqref{eq:cutoff-H2-bound}. 

It remains only to verify that $\ni, \no \in \cA$. Since $\bu \in H^2$ and the cutoffs have bounded derivatives through order two, the product rule gives $\chi_r \bu, (1-\chi_r) \bu \in H^2$. The denominator bound, the embedding $H^2(\R^2)\hookrightarrow W^{1,4}(\R^2)$, and the chain rules for the normalization map then yield $\ni-\ein_3, \no-\ein_3 \in H^2(\R^2;\R^3)$. Since both maps are sphere-valued, they belong to $\cA$. This completes the proof.
\end{proof}

The preceding lemmas provide the separating radii and a $H^2$-estimate to isolate the inner and outer clusters. The following proposition records the resulting split maps and their degree, energy, and density properties. 

\begin{proposition}\label{prop:splitting}
Let $(\n_k)_{k \in \N}$ and its associated density sequence $(\rho_k)_{k \in \N}$ satisfy the assumptions of Lemma \ref{la:small-annulus}, and let $(r_k)_{k \in \N}$ and $\beta>0$ be provided by that lemma. Then, for all sufficiently large $k$, there exist maps $\ni_k, \no_k \in \cA$ satisfying
\begin{equation}\label{eq:agreement}
\ni_k = \left\{ \begin{array}{l} \n_k \quad\text{on}\quad B_{r_k}, \\ \ein_3 \quad\text{on}\quad \R^2 \setminus B_{r_k+1}, \end{array} \right. \qquad
\no_k = \left\{ \begin{array}{l} \ein_3 \quad\text{on}\quad B_{r_k}, \\ \n_k \quad\text{on}\quad \R^2 \setminus B_{r_k+1}. \end{array} \right.
\end{equation}
Their degrees split exactly:
\begin{equation}\label{eq:degree-split}
Q(\ni_k)+Q(\no_k)=Q(\n_k).
\end{equation}
Moreover,
\begin{equation}\label{eq:energy-split}
\sup_{0 \le \eps \le 2} \abs*{\cF_\eps(\ni_k) + \cF_\eps(\no_k) - \cF_\eps(\n_k) } \longrightarrow 0.
\end{equation}
If $\rho_k^{\mathrm{in}}$ and $\rho_k^{\mathrm{out}}$ denote the densities \eqref{eq:rho-density} associated with $\ni_k$ and $\no_k$, respectively, then
\begin{equation}\label{eq:cut-density}
\int_{B_{r_k-2}} \rho_k^{\mathrm{in}} \dif x \ge \beta, \qquad 
\int_{\R^2 \setminus B_{r_k+3}} \rho_k^{\mathrm{out}} \dif x \ge \beta.
\end{equation}
\end{proposition}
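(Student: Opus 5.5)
The maps are obtained by applying Lemma~\ref{la:projection} to $\n=\n_k$ with $r=r_k$. By \eqref{eq:annulus-small}, the annular quantity $\eta_k$ of \eqref{eq:cutoff-smallness} tends to zero. Hence $\eta_k\le\eta_0$ and $r_k\ge3$ for all large $k$, so the lemma yields $\ni_k,\no_k\in\cA$ with the agreement properties \eqref{eq:agreement} and the bound
\[
\norm{\ni_k-\ein_3}^2_{H^2(A_{r_k})}+\norm{\no_k-\ein_3}^2_{H^2(A_{r_k})}\le C\eta_k\to0 .
\]
The density bounds \eqref{eq:cut-density} are immediate. On $B_{r_k-2}\subset B_{r_k}$ we have $\ni_k=\n_k$, and on $\R^2\setminus B_{r_k+3}$ we have $\no_k=\n_k$. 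The densities therefore coincide with $\rho_k$ on these sets, and \eqref{eq:two-positive-masses} applies.

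For the degree splitting \eqref{eq:degree-split}, split the integral of $q$ into three regions.
\begin{itemize}
\item On $B_{r_k}$, $q(\ni_k)=q(\n_k)$ and $q(\no_k)=0$, since $\no_k$ is constant there.
\item On $\R^2\setminus B_{r_k+1}$ the roles are reversed: $q(\no_k)=q(\n_k)$ and $q(\ni_k)=0$.
\item It remains to treat the transition annulus $T_k=B_{r_k+1}\setminus B_{r_k}$. There $|q(\bw)|\le|\nabla\bw|^2$, so the contributions of $\n_k,\ni_k,\no_k$ are bounded by $\norm{\nabla\cdot}_{L^2(A_{r_k})}^2\le C\eta_k\to0$.
\end{itemize}
Thus $4\pi\bigl(Q(\ni_k)+Q(\no_k)-Q(\n_k)\bigr)\to0$. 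All three degrees are integers, so the difference vanishes for large $k$.

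For the energy splitting \eqref{eq:energy-split}, write the integrand of $\cF_\eps$ as
\[
f_\eps(\bw)=\tfrac12\bigl(|\Delta\bw|^2+|\bw-\ein_3|^2-\eps|\nabla\bw|^2\bigr).
\]
The density of $\cF_\eps$ for the pair $(\ni_k,\no_k)$ agrees pointwise with that of $\n_k$ off $T_k$. On $B_{r_k}$ the outer map contributes $f_\eps(\ein_3)=0$, and the symmetric statement holds outside $B_{r_k+1}$. Since $0\le\eps\le2$, we have $|f_\eps(\bw)|\le C\bigl(|D^2\bw|^2+|\bw-\ein_3|^2+|\nabla\bw|^2\bigr)$ with $C$ independent of $\eps$. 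Hence
\[
\sup_{0\le\eps\le2}\bigl|\cF_\eps(\ni_k)+\cF_\eps(\no_k)-\cF_\eps(\n_k)\bigr|
\le C\Bigl(\norm{\ni_k-\ein_3}^2_{H^2(T_k)}+\norm{\no_k-\ein_3}^2_{H^2(T_k)}+\int_{T_k}\rho_k\Bigr).
\]
This is at most $C\eta_k$ by \eqref{eq:cutoff-H2-bound} and $T_k\subset A_{r_k}$, which gives \eqref{eq:energy-split}.

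One technical point remains. $Q$ is defined for maps in $\cA$, and the local identities $q(\ni_k)=q(\n_k)$ on open sets need the a.e. equality of the derivatives. This holds because the maps coincide on open sets and lie in $H^2_{\mathrm{loc}}$. The integrability of $q$ on the annulus follows from $\nabla\bw\in L^2$. The main burden, the $H^2$ control of the sphere projection, is already carried by Lemma~\ref{la:projection}. The only delicate step here is the integrality argument for the degree, which needs the annular error to be strictly less than $4\pi$; this holds for all large $k$.
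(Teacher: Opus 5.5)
Your proposal is correct and follows the paper's proof essentially step for step: you apply Lemma~\ref{la:projection} with $r=r_k$, confine the degree and energy discrepancies to the transition annulus $T_k\subset A_{r_k}$ where the $H^2$ cutoff bound makes them small, use integrality of the degree, and read off \eqref{eq:cut-density} from the exact agreement. The only difference is your cruder pointwise bound $|q(\bw)|\le|\nabla\bw|^2$ in place of the paper's constant $\tfrac12$, which does not affect the argument.
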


\begin{proof}
We denote
\[
T_k = B_{r_k+1} \setminus B_{r_k}, \qquad A_k = B_{r_k+2} \setminus B_{r_k-1}.
\]
Thus $T_k \subset A_k \subset B_{r_k+3} \setminus B_{r_k-2}$. These three nested annuli play different roles: $B_{r_k+3} \setminus B_{r_k-2}$ is the low-$H^2$-mass annulus supplied by Lemma \ref{la:small-annulus}, $T_k$ is the region where the cutoff varies and the maps are actually modified, and $A_k$ is a buffered neighborhood of $T_k$ on which the $H^2$ cost of the modification is estimated.

The smallness \eqref{eq:annulus-small} in Lemma \ref{la:small-annulus} permits application of Lemma \ref{la:projection} to $\n_k$ with $r=r_k$. It yields maps $\ni_k, \no_k \in \cA$ satisfying \eqref{eq:agreement} and
\begin{equation}\label{eq:cut-maps-small}
\norm{\ni_k - \ein_3}_{H^2(A_{r_k})}^2 + \norm{\no_k - \ein_3}_{H^2(A_{r_k})}^2 + \norm{\n_k - \ein_3}_{H^2(A_{r_k})}^2 \longrightarrow 0.
\end{equation}
The three degree integrals differ only on the transition annulus $T_k$. For every $\n \in \cA$,
\[
|\n \cdot (\partial_1 \n \times \partial_2 \n)| \le \frac{1}{2}|\nabla \n|^2.
\]
It follows from $T_k \subset A_k$ and \eqref{eq:cut-maps-small} that
\begin{align*}
 |Q(\ni_k)+Q(\no_k)-Q(\n_k)| 
\le  \frac{1}{8\pi} \int_{T_k} |\nabla \ni_k|^2 + |\nabla \no_k|^2 + |\nabla \n_k|^2 \dif x \to 0.
\end{align*}
The expression on the left is always an integer, so it vanishes for all sufficiently large $k$, proving \eqref{eq:degree-split}. 

Although $\cF_\eps$ is globally nonnegative for $0 \le \eps \le 2$, its local density is not pointwise nonnegative because of the Dirichlet energy term. For any measurable set $\Omega \subset \R^2$, let $\cF_\eps(\n;\Omega)$ denote the integral defining $\cF_\eps(\n)$, with the integration domain restricted to $\Omega$. We then use the following local estimate: 
\begin{align*}
&\abs*{\cF_\eps(\n; \Omega)} = \abs*{\frac{1}{2} \int_\Omega |\Delta \n|^2 + |\n-\ein_3|^2 - \eps|\nabla \n|^2 \dif x} \\
& \le \frac{1}{2} \int_\Omega |\Delta \n|^2 + |\n-\ein_3|^2 + \eps|\nabla \n|^2 \dif x
 \le C \norm{\n-\ein_3}_{H^2(\Omega)}^2
\end{align*}
for a constant $C>0$ independent of $\eps$.
By the exact agreement \eqref{eq:agreement}, $\cF_\eps(\ni_k) + \cF_\eps(\no_k) - \cF_\eps(\n_k)$ is supported in $T_k$. The preceding local estimate, the inclusion $T_k \subset A_k$ and \eqref{eq:cut-maps-small} imply that
\begin{align*}
&\abs*{\cF_\eps(\ni_k) + \cF_\eps(\no_k) - \cF_\eps(\n_k)}\\
& \le C \bra*{\norm{\ni_k - \ein_3}_{H^2(A_{r_k})}^2 + \norm{\no_k - \ein_3}_{H^2(A_{r_k})}^2 + \norm{\n_k - \ein_3}_{H^2(A_{r_k})}^2}\\
& \longrightarrow 0.
\end{align*}
This proves \eqref{eq:energy-split}. Finally, $B_{r_k-2} \subset B_{r_k}$ and $\R^2 \setminus B_{r_k+3} \subset \R^2 \setminus B_{r_k+1}$. The agreement properties \eqref{eq:agreement} therefore give
\[
\int_{B_{r_k-2}} \rho^{\mathrm{in}}_k \dif x = \int_{B_{r_k-2}} \rho_k \dif x \ge \beta, \quad
\int_{\R^2 \setminus B_{r_k+3} } \rho^{\mathrm{out}}_k \dif x = \int_{\R^2 \setminus B_{r_k+3} } \rho_k \dif x \ge \beta,
\]
where the inequalities follow from \eqref{eq:two-positive-masses} in Lemma \ref{la:small-annulus}. This is \eqref{eq:cut-density}.
\end{proof}

\subsection{A threshold-stable binding inequality}\label{sec:binding}
 
By Proposition \ref{prop:splitting}, it remains to exclude dichotomy into two nonzero-degree components. We obtain the required binding gap by combining a uniform lower bound for the Helmholtz residual with an explicit degree-$-1$ comparison map.

\subsubsection{Helmholtz circle means and the south-pole cost}

Define the Helmholtz residual by
\[
\cR(\n):=(\Delta+1)(\n-\ein_3).
\]
Thus $\cR(\n)$ measures the failure of $\n-\ein_3$ to satisfy the homogeneous Helmholtz equation. The homogeneous part of the circle-mean identity is the classical Helmholtz mean-value formula \cite{Kuznetsov2021}; the inhomogeneous representation and its $L^2$-estimate are derived here.
\begin{lemma}\label{la:circle-mean}
For $\n \in \cA$, $x_0 \in \R^2$ and $r>0$, we denote the mean value of $\bs n$ over the circle $\partial B_r(x_0)$ by
\[
\widetilde{\bs n}(x_0,r) = \frac{1}{2\pi} \int_0^{2\pi} \n(x_0 + r(\cos \varphi, \sin \varphi)) \dif \varphi.
\]
Then
\begin{equation}\label{eq:circle-mean}
\widetilde{\bs n}(x_0,r) = J_0(r) \n(x_0) + (1-J_0(r)) \ein_3 + \widetilde{\bs e}(x_0,r),
\end{equation}
where the error term $\widetilde{\bs e}(x_0,r)$ is generated by $\cR(\n)$,
\begin{equation}\label{eq:circle-error}
\widetilde{\bs e}(x_0,r)=\frac{1}{2\pi} \int_{B_r(x_0)} G_r(|y-x_0|) \cR(\n)(y) \dif y,
\end{equation}
and $G_r$ is the associated Green kernel given by
\[
G_r(t)=\frac{\pi}{2}\pra*{J_0(t)Y_0(r) - Y_0(t)J_0(r) },
\]
where $J_0$ and $Y_0$ are the Bessel functions of order zero of the first and second kind, respectively.
Moreover,
\begin{equation}\label{eq:kernel-norm}
\abs*{\widetilde{\bs e}(x_0,r)} \le \bra*{\frac{1-J_0(r)^2}{4\pi}}^{1/2} \norm{\cR(\n)}_{L^2(B_r(x_0))}.
\end{equation}
\end{lemma}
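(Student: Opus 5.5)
We work with $\bu=\n-\ein_3\in H^2(\R^2;\R^3)$, so that $(\Delta+1)\bu=\cR(\n)$. The identity \eqref{eq:circle-mean} then amounts to
\[
\widetilde{\bu}(x_0,r)=J_0(r)\bu(x_0)+\widetilde{\bs e}(x_0,r),
\]
since $\widetilde{\bs n}=\ein_3+\widetilde{\bu}$ and $\ein_3+J_0(r)\bu(x_0)=J_0(r)\n(x_0)+(1-J_0(r))\ein_3$. By translation invariance we may take $x_0=0$. By density of $C_c^\infty$ in $H^2$, both sides are continuous in $\bu$ with respect to the $H^2$ norm: point values and circle means are controlled by the $H^2\hookrightarrow C^0$ embedding, and the error term by the $L^2$ bound proved below. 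It therefore suffices to prove the identity for smooth $\bu$. We argue componentwise, so let $u$ be scalar and $f=(\Delta+1)u$.

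The main step is a Green identity on $B_r$ against the radial function $v(y)=G_r(|y|)$. Since $J_0$ and $Y_0$ both solve Bessel's equation of order zero, $(\Delta+1)v=0$ on $B_r\setminus\{0\}$. The boundary condition $v=G_r(r)=0$ on $\partial B_r$ holds by construction. Near the origin, $Y_0(t)\sim\frac{2}{\pi}\log t$, so
\[
v(y)\approx-\tfrac{\pi}{2}J_0(r)\cdot\tfrac{2}{\pi}\log|y|=-J_0(r)\log|y|,
\]
that is, $v$ has the singularity of $-J_0(r)\log|y|$. On the outer circle we compute $\partial_\nu v=G_r'(r)$. Using $J_0'=-J_1$, $Y_0'=-Y_1$ and the Wronskian $J_1Y_0-J_0Y_1=2/(\pi r)$, we get
\[
G_r'(r)=\tfrac{\pi}{2}\bigl(-J_1(r)Y_0(r)+Y_1(r)J_0(r)\bigr)=-\tfrac{1}{r}.
\]

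We now apply Green's second identity on $B_r\setminus B_\delta$ and let $\delta\to0$:
\[
\int_{B_r\setminus B_\delta}\bigl(v(\Delta+1)u-u(\Delta+1)v\bigr)=\int_{\partial(B_r\setminus B_\delta)}(v\partial_\nu u-u\partial_\nu v).
\]
On the outer boundary, $v=0$ and $\partial_\nu v=-1/r$, so the contribution is $\frac1r\int_{\partial B_r}u=2\pi\,\widetilde u(r)$. On the inner circle, with the sign of the inward-pointing normal relative to the origin, the logarithmic singularity contributes $-2\pi J_0(r)u(0)$ in the limit. Hence $\int_{B_r}G_r f=2\pi(\widetilde u(r)-J_0(r)u(0))$, which is exactly \eqref{eq:circle-mean}--\eqref{eq:circle-error}. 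The one genuinely delicate point is keeping these signs and normalizations straight; a quick sanity check is $f=0$, $u=J_0(|y|)$, which must reproduce the classical mean-value formula.

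For \eqref{eq:kernel-norm}, Cauchy--Schwarz gives
\[
|\widetilde{\bs e}|\le\tfrac{1}{2\pi}\Bigl(2\pi\int_0^r G_r(t)^2t\,dt\Bigr)^{1/2}\|\cR(\n)\|_{L^2(B_r)},
\]
so the claim reduces to the identity $\int_0^rG_r(t)^2t\,dt=\frac{\pi}{2}(1-J_0(r)^2)$, i.e. $\|G_r\|_{L^2(B_r)}^2=\pi^2(1-J_0(r)^2)$. This identity is the computational obstacle. To prove it, we would use the Lommel indefinite integral
\[
\int t\,\mathcal C_0^2\,dt=\tfrac{t^2}{2}\bigl(\mathcal C_0^2+\mathcal C_1^2\bigr)
\]
for the cylinder function $\mathcal C_0(t)=\frac{2}{\pi}G_r(t)$, whose derivative is $\mathcal C_0'=-\mathcal C_1$. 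At $t=r$ we have $\mathcal C_0(r)=0$ and $\mathcal C_1(r)=-\mathcal C_0'(r)=\frac{2}{\pi r}$ by the computation $G_r'(r)=-1/r$ above; together these give $\frac{r^2}{2}\cdot\frac{4}{\pi^2r^2}=\frac{2}{\pi^2}$. As $t\to0$, the term $\frac{t^2}{2}\mathcal C_0^2$ vanishes, while $\mathcal C_1(t)\sim J_0(r)\,Y_1(t)\sim-\frac{2J_0(r)}{\pi t}$ yields the limit $\frac{2J_0(r)^2}{\pi^2}$. The difference is $\frac{2}{\pi^2}(1-J_0(r)^2)$, and multiplying by $\pi^2/4$ gives the claimed identity.
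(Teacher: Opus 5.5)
Your argument is correct in substance, but you reach the representation formula by a different route than the paper. The paper averages $(\Delta+1)\bu=\cR(\n)$ over circles to obtain the radial Bessel ODE for $\widetilde{\bu}$ with $\widetilde{\bu}'(0)=0$. It discards the $Y_0$ mode by regularity at $r=0$ and gets $G_r$ from variation of parameters using the Wronskian.

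You instead apply Green's second identity on $B_r\setminus B_\delta$ against $v=G_r(|y|)$. The circle mean comes from the outer flux $G_r'(r)=-1/r$, and the term $J_0(r)\bu(x_0)$ comes from the logarithmic singularity $-J_0(r)\log|y|$ at the center. I checked your boundary signs and limits, and they are right. Your route makes the name ``Green kernel'' literal and avoids justifying the averaged ODE and its data at $r=0$. The paper's route stays one-dimensional and shows directly why only the $J_0$ mode survives.

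Your kernel-norm step and density argument coincide with the paper's. Lommel's integral $\int t\,\mathcal{C}_0^2\,dt=\frac{t^2}{2}(\mathcal{C}_0^2+\mathcal{C}_1^2)$ is exactly the paper's identity $\frac{d}{dt}\bigl[\frac{t^2}{2}(G_r^2+G_r'^2)\bigr]=tG_r^2$. It uses the same endpoint data: $G_r(r)=0$, $G_r'(r)=-1/r$, $tG_r(t)\to0$ and $tG_r'(t)\to -J_0(r)$.

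The constants in your last paragraph need fixing, because as written the chain does not close.
\begin{itemize}
\item The identity you say the claim reduces to, $\int_0^r G_r(t)^2t\,dt=\frac{\pi}{2}(1-J_0(r)^2)$ (equivalently $\|G_r\|_{L^2(B_r)}^2=\pi^2(1-J_0(r)^2)$), is false by a factor of $\pi$.
\item Even if it held, Cauchy--Schwarz would only give the weaker constant $\frac12(1-J_0(r)^2)^{1/2}$, not the stated one.
\item Your Lommel computation actually yields $\frac{\pi^2}{4}\cdot\frac{2}{\pi^2}(1-J_0(r)^2)=\frac12(1-J_0(r)^2)$, so $\|G_r\|_{L^2(B_r)}^2=\pi(1-J_0(r)^2)$.
\item With this value, $\frac{1}{2\pi}\sqrt{\pi(1-J_0(r)^2)}=\sqrt{(1-J_0(r)^2)/(4\pi)}$, which is exactly the constant in the statement.
\end{itemize}
Separately, the correct asymptotic is $\mathcal{C}_1(t)\sim -J_0(r)Y_1(t)\sim +\frac{2J_0(r)}{\pi t}$. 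Your sign slip there is harmless, since only $\mathcal{C}_1^2$ enters.
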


\begin{proof}
First we assume that $\bu=\n-\ein_3$ is smooth and take $x_0=0$ by translation. Define
\[
\widetilde{\bs u}(r)=\frac{1}{2\pi} \int_0^{2\pi} \bu(r \cos \varphi, r \sin \varphi) \dif \varphi
\]
and
\[
\widetilde{\cR}(r)=\frac{1}{2\pi} \int_0^{2\pi} \cR(\n)(r \cos \varphi, r \sin \varphi) \dif \varphi.
\]
Averaging the equation $(\Delta+1)\bu=\cR(\n)$ over $\partial B_r(x_0)$ gives
\[
\widetilde{\bs u}''(r) + \frac{1}{r}\widetilde{\bs u} '(r) + \widetilde{\bs u}(r)=\widetilde{\cR}(r), \quad \widetilde{\bs u}(0)=\n(0)-\ein_3, \quad \widetilde{\bs u}'(0)=0.
\]
The fundamental solutions of the homogeneous equation are $J_0$ and $Y_0$, and their Wronskian is (see \cite[(10.5.2)]{NISTDLMF})
\[
J_0(r) Y'_0(r)-J'_0(r)Y_0(r)=\frac{2}{\pi r}.
\]
Regularity of $\widetilde{\bs u}$ at $r=0$ excludes a homogeneous $Y_0$-term. Therefore, variation of parameters gives
\begin{align*}
\widetilde{\bs u}(r) &= J_0(r) \bu(0) + \frac{\pi}{2} \int_0^r \pra*{J_0(t)Y_0(r) - Y_0(t)J_0(r)} \widetilde{\cR}(t)\, t \dif t\\
&=J_0(r) \bu(0) + \int_0^r G_r(t) \widetilde{\cR}(t)\, t \dif t\\
&=J_0(r) \bu(0) + \frac{1}{2\pi} \int_{B_r(0)}G_r(|y|) \cR(\n)(y) \dif y.
\end{align*}
Since $\widetilde{\bs n}(0,r)=\ein_3 + \widetilde{\bs u}(r)$ and $\bu(0)=\n(0)-\ein_3$, this proves \eqref{eq:circle-mean} and \eqref{eq:circle-error} for smooth $\n$.

Next we compute the norm of the kernel. Its definition and the Bessel Wronskian imply
\begin{equation}\label{eq:gr1}
G_r''(t) + \frac{1}{t} G_r'(t)  + G_r(t) = 0, \qquad G_r(r)=0, \quad G'_r(r)=-\frac{1}{r}.
\end{equation}
The standard expansion of $J_0$ and $Y_0$ at the origin gives, for any fixed $r>0$,
\begin{equation}\label{eq:gr2}
t G_r(t) \to 0, \quad t G'_r(t) \to -J_0(r) \quad\text{as } t \to 0.
\end{equation}
Using \eqref{eq:gr1} for $G_r$  gives
\[
\frac{\dif}{\dif t} \pra*{\frac{t^2}{2} (G_r(t)^2 + G'_r(t)^2) } = t G_r(t)^2.
\]
Combining with \eqref{eq:gr2} yields
\begin{align*}
\int_{B_r} G_r(|y|)^2 \dif y &= 2\pi\int_0^r G_r(t)^2 t \dif t \\
&= 2\pi\bra*{\frac{r^2}{2}\pra*{G_r(r)^2+G'_r(r)^2} - \frac{1}{2} \lim_{t \to 0} \pra*{(t G_r(t))^2 + (tG'_r(t))^2}}\\
& = \pi \bra*{1-J_0(r)^2}.
\end{align*}
The Cauchy--Schwarz inequality therefore gives
\begin{align*}
|\widetilde{\bs e}(0,r)| & \le \frac{1}{2\pi} \bra*{\int_{B_r} G_r(|y|)^2 \dif y}^{1/2} \bra*{\int_{B_r} |\cR(\n)(y)|^2 \dif y}^{1/2} \\
& \le \frac{1}{2\pi} \bra*{\pi \bra*{1-J_0(r)^2}}^{1/2}\norm{\cR(\n)}_{L^2(B_r)},
\end{align*}
which is \eqref{eq:kernel-norm}.

The general case follows by mollification, since the preceding derivation uses only the linear Helmholtz equation and does not require the sphere constraint. Take $\bu_\delta = \bu * \eta_\delta \to \bu$ in $H^2 $, while $(\Delta+1) \bu_\delta \to (\Delta+1) \bu$ in $L^2$. Since $G_r \in L^2(B_r)$, all terms in the circle-mean identity pass to the limit.
\end{proof}

The next lemma converts the circle-mean estimate into a lower bound for the energy $\cF_\eps$. Recall that $r_*$ is the first positive zero of $J_1$, and set $\alpha=-J_0(r_*)$.  By the interlacing properties of the positive zeros of $J_0$ and $J_1$, we have $J_0(r_*)<0$, and hence $\alpha\in(0,1)$.
\begin{lemma}\label{la:south-cost}
 If $\n \in \cA$ and $Q(\n) \neq 0$, then
 \begin{equation}\label{eq:south-cost}
 \frac{1}{2}\norm{\cR(\n)}^2_{L^2(\R^2)} \ge  b_* = \frac{8\pi \alpha^2}{1-\alpha^2}.
 \end{equation}
 Moreover, for $0 \le \eps \le 2$,
 \begin{equation}\label{eq:lower-bound}
 \cF_\eps(\n) \ge  b_* + 8\pi \bra*{1-\frac{\eps}{2}}|Q(\n)| \quad\text{if } Q(\n) \neq 0.
 \end{equation}
\end{lemma}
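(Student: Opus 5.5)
Since $Q(\n)\neq 0$, the continuous extension of $\n$ to $\R^2\cup\{\infty\}\cong\St$ has nonzero degree and is therefore surjective. Because $\n(\infty)=\ein_3\neq-\ein_3$, there is a point $x_0\in\R^2$ with $\n(x_0)=-\ein_3$. The plan is to apply Lemma~\ref{la:circle-mean} at this $x_0$ with the radius $r=r_*$, and to compare the third components of the two sides of \eqref{eq:circle-mean}.

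On the left-hand side we have $|\widetilde{\bs n}(x_0,r_*)|\le 1$, since it is the average of unit vectors, and in particular its third component is at least $-1$. On the right-hand side, the non-error part equals
\[
J_0(r_*)\,\n(x_0)+(1-J_0(r_*))\ein_3=(-\alpha)(-\ein_3)+(1+\alpha)\ein_3=(1+2\alpha)\ein_3 .
\]
This overshoots the unit ball by $2\alpha$, and the error term has to absorb that excess:
\[
|\widetilde{\bs e}(x_0,r_*)|\ge 1+2\alpha-1=2\alpha .
\]
The estimate \eqref{eq:kernel-norm} then gives
\[
4\alpha^2\le \frac{1-\alpha^2}{4\pi}\,\norm{\cR(\n)}_{L^2(\R^2)}^2 ,
\]
so that $\frac12\norm{\cR(\n)}^2\ge 8\pi\alpha^2/(1-\alpha^2)=b_*$, which is \eqref{eq:south-cost}. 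The constants match exactly, which suggests this is the intended route. The choice $r=r_*$ is natural here because it maximizes $|J_0|$ away from the origin, and hence optimizes $\alpha^2/(1-\alpha^2)$.

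For \eqref{eq:lower-bound} we insert \eqref{eq:south-cost} into the residual decomposition \eqref{eq:Feps-residual}. We bound the Dirichlet term using the classical Belavin--Polyakov inequality $\norm{\nabla\n}_{L^2}^2\ge 8\pi|Q(\n)|$, already quoted in the concentration-compactness discussion, and note that $1-\eps/2\ge0$ for $\eps\le2$.

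The only delicate step is the pointwise use of $\n(x_0)$ in Lemma~\ref{la:circle-mean}. This is justified because $\n$ has a continuous representative, by the $H^2\hookrightarrow C^{0,\gamma}$ embedding, and the identity holds for that representative after the mollification argument already given. Surjectivity is used on this continuous representative. The sign fact $J_0(r_*)<0$ is taken from the interlacing remark preceding the statement.
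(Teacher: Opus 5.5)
Your proposal is correct and follows the paper's proof essentially step for step. Surjectivity gives a point $x_0$ with $\n(x_0)=-\ein_3$, and the circle-mean identity at $r_*$ produces $(1+2\alpha)\ein_3+\widetilde{\bs e}(x_0,r_*)$; the bound $|\widetilde{\bs n}|\le 1$ with the reverse triangle inequality then forces $|\widetilde{\bs e}|\ge 2\alpha$, after which \eqref{eq:kernel-norm}, \eqref{eq:Feps-residual} and the Belavin--Polyakov bound finish the argument. (Your aside that the third component is at least $-1$ is not the inequality being used. What matters is $|\widetilde{\bs n}|\le1$, or equivalently that the third component is at most $1$, and your actual derivation does use that.)
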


\begin{proof}
A continuous map $f: \St \to \St$ of nonzero degree is surjective. Indeed, if $\bs p \notin f(\St)$, then $f$ factors through the contractible set $\St \setminus \set{\bs p}$. Hence $f$ is null-homotopic and therefore has degree zero. Applying this to $\n$ gives $\n(x_0)=-\ein_3$ for some $x_0 \in \R^2$. Taking $r=r_*$ in Lemma \ref{la:circle-mean} and using $J_0(r_*)=-\alpha$, we obtain
\[
\widetilde{\bs n}(x_0,r_*) = (1+2\alpha) \ein_3 + \widetilde{\bs e}(x_0,r_*).
\]
Since $|\n|=1$, the triangle inequality gives
\[
|\widetilde{\bs n}(x_0,r_*) | \le \frac{1}{2\pi}\int_0^{2\pi}|\n|\dif \varphi =1.
\]
Hence, by the reverse triangle inequality,
\[
1\ge |(1+2\alpha)\ein_3+\widetilde{\bs e}(x_0,r_*)|
\ge 1+2\alpha-|\widetilde{\bs e}(x_0,r_*)|,
\]
and therefore $|\widetilde{\bs e}(x_0,r_*)|\ge2\alpha$. Combining this with \eqref{eq:kernel-norm} yields
\[
\norm{\cR(\n)}_{L^2(\R^2)}^2 \ge \norm{\cR(\n)}_{L^2(B_{r_*}(x_0))}^2 \ge \frac{16\pi \alpha^2}{1-\alpha^2}.
\]
This gives \eqref{eq:south-cost}. The residual representation \eqref{eq:Feps-residual} and the classical topological lower bound $\norm{\nabla \n}_{L^2(\R^2)}^2 \ge 8\pi |Q(\n)| $ give \eqref{eq:lower-bound}.
\end{proof}

The south-pole value $\n(x_0)=-\ein_3$ and radius $r_*$ are chosen to optimize the lower bound: the antipodal value $-\ein_3$ maximizes the distance of the homogeneous mean from the unit ball, while $r=r_*$ maximizes $|J_0(r)|$ among radii for which $J_0(r)<0$. Other such choices would give valid but weaker residual bounds.

\subsubsection{An upper bound for \texorpdfstring{$\widehat{I}_{-1}(\eps)$}{I-1(eps)}}

We next derive an upper bound for $\widehat{I}_{-1}(\eps)$ by means of an explicit radial test map. Combined with the lower bound in Lemma \ref{la:south-cost}, this gives the strict binding inequality required for compactness.

Define
\[
\bw_\rho(r,\varphi) = \begin{pmatrix}
\sin f(r/\rho) \cos\varphi \\
\sin f(r/\rho) \sin\varphi\\
\cos f(r/\rho)
\end{pmatrix}
\quad\text{where}\quad
f(t) = \left\{ 
\begin{array}{ll}
 \pi(1-t)^2, & 0 \le t \le 1,\\
 0, & t \ge 1.
\end{array}
 \right.
\]
Near the origin, $\sin f(t)=2\pi t -\pi t^2+O(t^3)$. 
At $r=\rho$, the map and the first derivative match the constant background $\ein_3$. Consequently, $\bw_\rho-\ein_3\in H^2(\R^2;\R^3)$ is compactly supported, while the radial degree formula gives $Q(\bw_\rho)=-1$. 

Define
\[
e(t)=f'(t)^2 + \frac{\sin^2 f(t)}{t^2}, \qquad \tau(t)=f''(t) + \frac{f'(t)}{t}-\frac{\sin f(t) \cos f(t)}{t^2}.
\]
Define the following energy quantities associated with $\bw_1$:
\begin{align*}
D_0&=\frac{1}{2}\int_{\R^2}|\nabla \bw_1|^2\dif x =\pi\int_0^1e(t)t\dif t, \\
 A_0&=\int_{\R^2}|\Delta \bw_1|^2\dif x 
 =2\pi\int_0^1\bigl(\tau(t)^2+e(t)^2\bigr)t\dif t, \\
 P_0&=\int_{\R^2}|\bw_1-\ein_3|^2\dif x =4\pi\int_0^1(1-\cos f(t))t\dif t.
\end{align*}
It follows from scaling that, for any $\rho>0$, 
\[
\cF_\eps(\bw_\rho) = \frac{1}{2\rho^2} A_0 - \eps D_0 + \frac{\rho^2}{2} P_0.
\]
The positive terms are minimized at
\begin{equation}\label{eq:trial-energy}
\rho_* = \bra*{\frac{A_0}{P_0}}^{1/4}, \quad\text{with}\quad
\cF_\eps(\bw_{\rho_*}) = \sqrt{A_0 P_0} - \eps D_0.
\end{equation}
Elementary one-dimensional estimates for the preceding integrals, together with bounds on the first negative minimum of $J_0$, give
\[
14.85<D_0<12\pi, \quad A_0 < 968, \quad P_0<1.59, \quad 0.402<\alpha<0.403.
\]
These estimates imply
\begin{equation}\label{eq:endpoint-gap}
\sqrt{A_0 P_0} - 2D_0 < 9.54 < 9.68 < 2 b_*.
\end{equation}
We define
\[
\delta_*:= 2 b_*-\bra*{\sqrt{A_0P_0}-2D_0}>0.
\]
Thus the radial test map lies strictly below the endpoint lower bound for any nontrivial degree splitting. The following proposition shows that this gap persists uniformly for $0 \le \eps \le 2$.

\begin{proposition}
For every $0 \le \eps \le 2$,
\begin{equation}\label{eq:strict-binding}
\widehat{I}_{-1}(\eps) \le \cF_\eps(\bw_{\rho_*}) \le 2 b_*+24\pi\left(1-\frac\eps2\right)-\delta_*.
\end{equation}
\end{proposition}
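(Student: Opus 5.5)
The first inequality is immediate: $\bw_{\rho_*}\in\cA_{-1}$, since $\bw_{\rho_*}-\ein_3$ is compactly supported in $H^2$ and has degree $-1$ by the radial degree formula. Hence $\widehat{I}_{-1}(\eps)\le\cF_\eps(\bw_{\rho_*})$. For the second inequality I will use the exact expression \eqref{eq:trial-energy}, namely $\cF_\eps(\bw_{\rho_*})=\sqrt{A_0P_0}-\eps D_0$. This is affine in $\eps$ with slope $-D_0$. The target bound $2b_*+24\pi(1-\eps/2)-\delta_*$ is also affine in $\eps$, with slope $-12\pi$.

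Both sides are therefore affine, so it suffices to compare them at the endpoint $\eps=2$ and then compare slopes. At $\eps=2$ the left side is $\sqrt{A_0P_0}-2D_0$. By the definition of $\delta_*$ this equals $2b_*-\delta_*$, which is exactly the right side at $\eps=2$, so equality holds there. For general $\eps\in[0,2]$, write $\eps=2-2s$ with $s=1-\eps/2\in[0,1]$. Then
\[
\sqrt{A_0P_0}-\eps D_0=\bigl(\sqrt{A_0P_0}-2D_0\bigr)+2sD_0=2b_*-\delta_*+2sD_0 .
\]
The right side is $2b_*-\delta_*+24\pi s$. The claim therefore reduces to $2sD_0\le24\pi s$, that is, $D_0\le12\pi$. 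This is one of the recorded numerical estimates ($D_0<12\pi$).

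Positivity of $\delta_*$ comes from \eqref{eq:endpoint-gap}. The argument above does not need it, but it is what makes the bound useful later.

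The only genuine content is therefore the numerical input, and that is where I expect the main obstacle. One part is the bound $D_0<12\pi$. This follows since $f'(t)^2=4\pi^2(1-t)^2$ and $\sin^2f/t^2\le f^2/t^2$ are explicitly integrable on $[0,1]$. A cruder estimate is also available: $D_0=\pi\int_0^1 e(t)t\dif t$, with $\int_0^1 4\pi^2(1-t)^2t\dif t=\pi^2/3$, plus a bounded second term, which stays well below $12\pi$. The other part, the endpoint gap \eqref{eq:endpoint-gap}, requires rigorous bounds on $A_0$, $P_0$, $D_0$ and on $\alpha=-J_0(r_*)$ with $r_*\approx3.8317$. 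I will take these as established in the preceding text and not redo them.
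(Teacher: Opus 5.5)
Your argument is correct and is the paper's own proof: the substitution $s=1-\eps/2$ is exactly the identity $2b_*+24\pi(1-\eps/2)-\cF_\eps(\bw_{\rho_*})=\delta_*+(2-\eps)(12\pi-D_0)$ used there, closed by the recorded bound $D_0<12\pi$. One side remark is wrong, though nothing depends on it: because $f(0)=\pi$, the bound $\sin^2 f\le f^2$ does not make $\sin^2 f/t$ integrable at $t=0$, and a correct crude estimate is $\sin^2 f=\sin^2(\pi-f)\le\min\{1,4\pi^2t^2\}$, which gives $D_0\le\pi\bigl(\pi^2/3+\tfrac12+\ln(2\pi)\bigr)<18<12\pi$.
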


\begin{proof}
Since $\bw_{\rho_*} \in \cA_{-1}$, \eqref{eq:trial-energy} gives
\[
\widehat{I}_{-1} (\eps)\le \cF_\eps(\bw_{\rho_*}) = \sqrt{A_0 P_0} -\eps D_0.
\]
In a nontrivial topological splitting of degree $-1$, the two components have degrees $q$ and $-1-q$ for some $q \in \Z \setminus \set*{0,-1}$. Applying \eqref{eq:lower-bound} to both components and using $|q|+|-1-q| \ge 3$ gives the lower bound
\[
2 b_* + 8\pi \bra*{1-\frac{\eps}{2}} \bra*{|q|+|-1-q|} \ge 2 b_* + 24\pi \bra*{1-\frac{\eps}{2}} .
\]
The difference between the splitting barrier and the energy of the trial map is
\[
2 b_* + 24\pi \bra*{1-\frac{\eps}{2}} - \cF_\eps(\bw_{\rho_*}) 
\ge \delta_*+(2-\eps)(12\pi-D_0) > 0
\]
since $D_0<12\pi$. This proves \eqref{eq:strict-binding}.
\end{proof}

\subsection{Compactness and attainment}\label{sec:existence}

We now combine the preceding results into a compactness theorem which is useful both at fixed field and in the strong-field limit. 

\begin{theorem}\label{thm:uniform-compactness}
Let $K\Subset[0,2)$, and suppose that
$\eps_k\in K$, $\n_k\in\cA_{-1}$, and
\begin{equation}\label{eq:almost-minimizers}
\cF_{\eps_k}(\n_k)-\widehat I_{-1}(\eps_k)\longrightarrow0.
\end{equation}
Then every subsequence has a further subsequence along which there exist a parameter $\eps_*\in K$, translations $a_k\in\R^2$, and a map
$\n_*\in\cA_{-1}$ such that
\[
\eps_k\longrightarrow\eps_*,
\qquad
\n_k(\,\cdot+a_k)-\ein_3
\longrightarrow
\n_*-\ein_3
\quad\text{strongly in }H^2(\R^2;\R^3).
\]
Every limit obtained in this way satisfies
\begin{equation}\label{eq:limit-minima}
Q(\n_*)=-1,
\qquad
\cF_{\eps_*}(\n_*)=\widehat I_{-1}(\eps_*),
\end{equation}
and, along the same subsequence,
\[
\widehat I_{-1}(\eps_k)
\longrightarrow
\widehat I_{-1}(\eps_*).
\]
\end{theorem}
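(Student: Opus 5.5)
We run the concentration-compactness scheme of this section, now with varying parameter $\eps_k$. The key preliminary is that $\eps\mapsto\widehat I_{-1}(\eps)$ is continuous on $[0,2)$. Since $\cF_\eps=\cF_0-\eps D$, the function $\widehat I_{-1}$ is an infimum of affine functions of $\eps$, hence concave and upper semicontinuous on $[0,2]$, and finite there. A concave function on $[0,2]$ is continuous on the open interval $(0,2)$. At $\eps=0$ upper semicontinuity gives one inequality. For the other, note that for almost-minimizers with $\eps$ small, $\cF_0(\n)\le \cF_\eps(\n)/(1-\eps/2)$ by \eqref{eq:F0-comparison}, so $D(\n)\le\cF_0(\n)/2$ is bounded. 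Hence $\widehat I_{-1}(0)\le \widehat I_{-1}(\eps)+C\eps$, which gives continuity at $0$.

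Fix the subsequence and pass to $\eps_k\to\eps_*\in K$, using compactness of $\overline K\subset[0,2)$. If $K$ is not closed, the limit lies in its closure, which is still a compact subset of $[0,2)$; this suffices for everything below. By \eqref{eq:F0-comparison} and \eqref{eq:strict-binding}, $\cF_0(\n_k)\le(1-\max K/2)^{-1}\sup\widehat I_{-1}$, so $\n_k-\ein_3$ is bounded in $H^2$. We form $\rho_k$ as in \eqref{eq:rho-density} with mass $M\ge8\pi$ and apply Lions' lemma. Vanishing is excluded exactly as before, via uniform convergence to $\ein_3$, which contradicts $Q=-1$.

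\textbf{Exclusion of dichotomy.} Lemma~\ref{la:small-annulus} and Proposition~\ref{prop:splitting} produce $\ni_k,\no_k$ with $Q(\ni_k)+Q(\no_k)=-1$. The uniform-in-$\eps$ statement \eqref{eq:energy-split} gives $\cF_{\eps_k}(\ni_k)+\cF_{\eps_k}(\no_k)=\cF_{\eps_k}(\n_k)+o(1)=\widehat I_{-1}(\eps_k)+o(1)$. We then distinguish two cases.
\begin{enumerate}[(1)]
\item Both degrees are nonzero. Then \eqref{eq:lower-bound} gives the lower bound $2b_*+24\pi(1-\eps_k/2)$, which contradicts \eqref{eq:strict-binding} because of the fixed gap $\delta_*$.
\item One degree is zero, say the component carrying degree $-1$ is $\boldsymbol{v}_k$ and the other is $\boldsymbol{z}_k$ with $Q(\boldsymbol{z}_k)=0$. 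Then $\cF_{\eps_k}(\boldsymbol{v}_k)\ge\widehat I_{-1}(\eps_k)$, so $\cF_{\eps_k}(\boldsymbol{z}_k)\le o(1)$. Coercivity \eqref{eq:F0-comparison} then gives $\|\boldsymbol{z}_k-\ein_3\|_{H^2}\to0$, contradicting \eqref{eq:cut-density}, which gives mass at least $\beta$.
\end{enumerate}
Hence compactness holds. Choose $a_k=x_k$; by tightness and the weak limit we get $\n_k(\cdot+a_k)-\ein_3\rightharpoonup\n_*-\ein_3$ in $H^2$, with strong $L^2$ convergence, and $\n_*$ is sphere-valued.

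\textbf{Identification of the limit.} Tightness gives $\rho_k$ small outside $B_R$ uniformly in $k$. Local strong convergence in $H^1$, from Rellich on balls together with the $W^{1,4}$ bound, gives $Q(\n_k)\to Q(\n_*)$: the integrand $q$ is controlled by $|\nabla\n|^2$, so the tails are uniformly small, and $q$ converges in $L^1_{\mathrm{loc}}$ because it is quadratic in gradients with a bounded factor. Thus $Q(\n_*)=-1$.

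\textbf{Minimality and energy convergence.} The residual form \eqref{eq:Feps-residual} gives weak lower semicontinuity jointly in $(\eps,\n)$. Indeed, $\|\nabla\n_k\|_{L^2}\to\|\nabla\n_*\|_{L^2}$ by the tightness and local compactness just described, and the residual term is weakly lower semicontinuous. Therefore
\[\widehat I_{-1}(\eps_*)\le\cF_{\eps_*}(\n_*)\le\liminf\cF_{\eps_k}(\n_k)=\lim\widehat I_{-1}(\eps_k)=\widehat I_{-1}(\eps_*),\]
where the last equality uses continuity. This proves \eqref{eq:limit-minima} and the convergence of the infima.

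\textbf{Upgrade to strong convergence.} Equality forces $\|\cR(\n_k)\|_{L^2}\to\|\cR(\n_*)\|_{L^2}$. Together with weak convergence, this gives $\cR(\n_k)\to\cR(\n_*)$ strongly in $L^2$. The operator $(\Delta+1)$ is not invertible, so $H^2$ strong convergence does not follow directly. We recover it from convergence of the full quadratic form. Since $\|\nabla\n_k\|_{L^2}$ also converges, $\cF_0(\n_k)=\tfrac12\|\cR\|^2+\|\nabla\n_k\|^2\to\cF_0(\n_*)$. Now $\cF_0$ is, up to the factor $\tfrac12$, the square of a Hilbert norm equivalent to the $H^2$ norm, namely $\tfrac12(\|\Delta\bu\|^2+\|\bu\|^2)$. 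Convergence of this norm together with weak convergence yields strong $H^2$ convergence.

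The main obstacle is expected to be the degree-zero remainder case of dichotomy when $\eps_k$ varies. It requires the energy splitting to hold uniformly in $\eps$, which is exactly what \eqref{eq:energy-split} supplies, and it requires the coercivity constant to be uniform over $K$.
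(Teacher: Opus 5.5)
Your proof is correct. It follows the same concentration-compactness skeleton as the paper: a uniform $H^2$ bound, vanishing excluded via $H^2_{\mathrm{loc}}\hookrightarrow L^\infty$, dichotomy excluded by the splitting together with the barrier gap $\delta_*$ and uniform coercivity on $K$, and then strong $H^1$ convergence and continuity of the degree. The genuine difference is how you identify the limiting energy.

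You show a priori that $\eps\mapsto\widehat I_{-1}(\eps)$ is continuous on $[0,2)$. It is a finite infimum of the affine maps $\eps\mapsto\cF_0(\n)-\eps D(\n)$, hence concave. You then close the chain $\widehat I_{-1}(\eps_*)\le\cF_{\eps_*}(\n_*)\le\liminf_k\cF_{\eps_k}(\n_k)=\widehat I_{-1}(\eps_*)$.

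The paper instead obtains $\widehat I_{-1}(\eps_k)\to\widehat I_{-1}(\eps_*)$ a posteriori. The lower bound comes from the admissible compactness limit $\n_*$ together with $\cF_{\eps_*}(\tilde{\n}_k)-\cF_{\eps_k}(\tilde{\n}_k)=(\eps_k-\eps_*)D(\tilde{\n}_k)\to0$. The upper bound comes from testing with a fixed $\bw\in\cA_{-1}$.

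Your route exploits the affine structure $\cF_\eps=\cF_0-\eps D$ and gives more than is needed. You get continuity of the infimum independently of any compactness, and even one-sided derivatives, which is consistent with the first-order expansion in Theorem~\ref{thm:first-order}. The paper's route needs no concavity, only continuity of $\cF_\eps$ in $\eps$ uniformly on $H^2$-bounded sets.

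Your upgrade step uses the $\cF_0$-norm $\tfrac12(\|\Delta\bu\|^2+\|\bu\|^2)$ together with the separately known convergence of $\|\nabla\bu_k\|_{L^2}$. The paper uses the form $B_{\eps_*}$ instead. This is the same Hilbert-space argument in a slightly different norm. Yours has the small advantage that the equivalence of the norm to the $H^2$ norm does not depend on $\eps_*<2$.
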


\begin{proof}
Fix an arbitrary subsequence. By \eqref{eq:F0-comparison}, \eqref{eq:strict-binding}, and \eqref{eq:almost-minimizers}, the sequence $(\n_k-\ein_3)$ is uniformly bounded in $H^2(\R^2;\R^3)$, and hence the total masses $\int_{\R^2}\rho_k\,\dif x$ are uniformly bounded. Since $K$ is compact, after passing to a further subsequence, not relabeled, we may assume that
\[
\eps_k\longrightarrow\eps_*\in K,
\qquad
\int_{\R^2}\rho_k\,\dif x\longrightarrow M
\]
for some $M\ge0$.
The topological lower bound gives $M\ge 8\pi$, while the local $H^2$ argument following the concentration-compactness alternatives excludes vanishing.

Suppose that dichotomy occurs. By Proposition \ref{prop:splitting}, we obtain split maps $\ni_k$ and $\no_k$.  By \eqref{eq:energy-split}, uniform coercivity on $K$, and the classical topological lower bound, their integer degrees are uniformly bounded. Hence, after passing to a further subsequence, we may assume that
\[
 Q(\ni_k)=q,\qquad Q(\no_k)=-1-q,
\]
for some $q \in \Z$.
If $q \notin \set{0,-1}$, both pieces have nonzero degree and $|q|+|-1-q| \ge 3$. Hence the energy splitting \eqref{eq:energy-split} and the energy barrier \eqref{eq:lower-bound} give
\begin{align*}
\cF_{\eps_k}(\n_k) & \ge 2 b_* + 8\pi \bra*{1-\frac{\eps_k}{2}}(|q|+|-1-q|)+o(1)\\
& \ge 2 b_* + 24\pi\bra*{1-\frac{\eps_k}{2}} +o(1).
\end{align*}
By \eqref{eq:strict-binding},
\[
2 b_*+24\pi\left(1-\frac{\eps_k}{2}\right)
\ge \widehat I_{-1}(\eps_k)+\delta_*.
\]
Consequently,
\[
\cF_{\eps_k}(\n_k)
\ge \widehat I_{-1}(\eps_k)+\delta_*+o(1),
\]
which contradicts \eqref{eq:almost-minimizers}.

If $q=0$, then $Q(\no_k)=-1$, and hence $\cF_{\eps_k}(\no_k) \ge \widehat I_{-1}(\eps_k)$.
On the other hand, Proposition \ref{prop:splitting} gives $ \|\ni_k - \ein_3\|_{H^2}^2\ge\beta$. Since $K\Subset[0,2)$, the coercivity estimate \eqref{eq:F0-comparison} is uniform on $K$, and consequently,
\[
 \cF_{\eps_k}(\ni_k) \ge c_K\|\ni_k-\ein_3\|_{H^2}^2 \ge c_K\beta>0.
\]
Energy splitting \eqref{eq:energy-split} therefore yields
\[
 \cF_{\eps_k}(\n_k) \ge\widehat I_{-1}(\eps_k)+c_K\beta+o(1),
\]
contradicting \eqref{eq:almost-minimizers}. The case $q = -1$ is identical after interchanging the pieces. Thus dichotomy is also impossible. 

The compactness alternative therefore holds. Choose translations $a_k\in\R^2$, set
\[
\tilde{\n}_k:=\n_k(\cdot+a_k), \quad \bu_k:=\tilde{\n}_k-\ein_3,
\]
and note that the densities associated with $\tilde{\n}_k$ are tight. After passing to a further subsequence, there exists $\bu_* \in H^2(\R^2;\R^3)$ such that
\begin{equation}\label{eq:weak-H2}
\bu_k \rightharpoonup \bu_* \quad\text{weakly in } H^2(\R^2;\R^3).
\end{equation}
Set $\n_*:=\bu_*+\ein_3$. The Rellich-Kondrachov theorem gives
\[
\bu_k \longrightarrow \bu_* \quad\text{strongly in } H^1(B_R) 
\]
for every fixed $R>0$. Tightness makes the $H^1$-tails of $\bu_k$ uniformly small, while
weak lower semicontinuity gives the corresponding tail bound for $\bu_*$. Combining this with the local convergence yields
\[
\bu_k \longrightarrow \bu_* \quad\text{strongly in } H^1(\R^2;\R^3).
\]
Since $\tilde{\n}_k \to \n_*$ strongly in $L^2$ and $|\tilde{\n}_k|=1$ a.e., we have $|\n_*|=1$ a.e. The degree is continuous under strong $H^1$-convergence, while translations preserve degree. Thus $Q(\n_*)=\lim_{k \to \infty} Q(\tilde{\n}_k) =-1$, so that $\n_*\in\cA_{-1}$.

Finally, we identify the limiting energy in order to upgrade the weak convergence in $H^2$ to strong convergence. The strong $H^1$-convergence gives $D(\tilde{\n}_k) \to D(\n_*)$. Consequently,
\[
\cF_{\eps_*}(\tilde{\n}_k) - \cF_{\eps_k}(\tilde{\n}_k) = (\eps_k - \eps_*) D(\tilde{\n}_k) \to 0.
\]
By translation invariance and \eqref{eq:almost-minimizers}, it follows that
\[
\cF_{\eps_*}(\tilde{\n}_k) = \widehat I_{-1}(\eps_k)+o(1).
\]
Weak lower semicontinuity therefore gives
\[
\widehat I_{-1}(\eps_*) \le \cF_{\eps_*}(\n_*)
\le \liminf_{k\to\infty} \cF_{\eps_*}(\tilde{\n}_k)
= \liminf_{k\to\infty}\widehat I_{-1}(\eps_k).
\]
Conversely, for every fixed $\bw\in\cA_{-1}$,
\[
\limsup_{k\to\infty}\widehat I_{-1}(\eps_k)
\le \lim_{k\to\infty}\cF_{\eps_k}(\bw) 
= \cF_{\eps_*}(\bw).
\]
Taking the infimum over $\bw\in\cA_{-1}$, we obtain
\[
\limsup_{k\to\infty}\widehat I_{-1}(\eps_k)
\le \widehat I_{-1}(\eps_*).
\]
Thus \eqref{eq:limit-minima} holds and $\cF_{\eps_*}(\tilde{\n}_k) \to \cF_{\eps_*}(\n_*) = \widehat I_{-1}(\eps_*)$.

It remains to upgrade the convergence. Define the symmetric bilinear form
\[
B_{\eps_*}(\bv,\bw)
:= \int_{\R^2} \Delta\bv\cdot\Delta\bw + \bv\cdot\bw - \eps_*\nabla\bv\cdot\nabla\bw \,\dif x.
\]
Since $\eps_*\in K\Subset[0,2)$, the interpolation estimate \eqref{eq:interpolation} implies that
\begin{equation}\label{eq:B_coercive}
B_{\eps_*}(\bv,\bv) \ge c_K\|\bv\|_{H^2(\R^2)}^2
\end{equation}
for every $\bv\in H^2(\R^2;\R^3)$, with $c_K>0$. Thus $B_{\eps_*}$ defines a norm equivalent to the standard $H^2$-norm. The weak convergence \eqref{eq:weak-H2} gives
\[
B_{\eps_*}(\bu_k,\bu_*) \longrightarrow B_{\eps_*}(\bu_*,\bu_*),
\]
while convergence of the energies gives
\[
B_{\eps_*}(\bu_k,\bu_k)
= 2\cF_{\eps_*}(\tilde{\n}_k) \longrightarrow
2\cF_{\eps_*}(\n_*) = B_{\eps_*}(\bu_*,\bu_*).
\]
Hence
\begin{align*}
B_{\eps_*}(\bu_k-\bu_*,\bu_k-\bu_*)
= B_{\eps_*}(\bu_k,\bu_k)-2B_{\eps_*}(\bu_k,\bu_*)+B_{\eps_*}(\bu_*,\bu_*)
\longrightarrow 0.
\end{align*}
By coercivity \eqref{eq:B_coercive}, $\bu_k\to\bu_*$ strongly in $H^2(\R^2;\R^3)$. This completes the proof.
\end{proof}

\begin{proof}[Proof of Theorem~\ref{thm:existence}]
For fixed $0\le\eps<2$, apply Theorem~\ref{thm:uniform-compactness} to a minimizing sequence for $\widehat I_{-1}(\eps)$ with $\eps_k\equiv\eps$. This gives attainment of $\widehat I_{-1}(\eps)$ and strong $H^2$ compactness of minimizing sequences modulo translations.

For $0<\eps<2$, set $H=\eps^{-2}$. The assertions for $E_H$ follow from the degree-preserving dilation $\m(x)=\n(H^{1/4}x)$ and the exact identities \eqref{eq:exact}.
\end{proof}

\section{The coercivity threshold}\label{sec:threshold}

This section describes the variational problem at and around the threshold $H=1/4$. We first quantify the degeneration of $H^2$-coercivity as $H\downarrow1/4$. We then construct degree-zero Weyl sequences that exhibit the loss of endpoint tightness and use the same critical oscillatory mode to prove that the energy is unbounded below for $0<H<1/4$. Finally, we establish a positive endpoint energy barrier in every nonzero degree class and prove right-continuity of $I_{-1}(H)$ at the threshold.

At the endpoint, completing the square in the original variables gives
\begin{equation}\label{eq:endpoint-residual}
E_{1/4}(\m) = \frac{1}{2} \norm*{\bra*{\Delta+\frac{1}{2}} (\m - \ein_3) }_{L^2(\R^2)}^2.
\end{equation}

\subsection{Coercivity and endpoint Weyl sequences}

The precise coercivity estimate is as follows.
\begin{proposition}\label{prop:best-coercivity}
For every $H>1/4$ and every $\m\in\cA$,
\[
E_H(\m) \ge c_H \norm{\m-\ein_3}_{H^2(\R^2)}^2,
\]
where
\begin{equation}\label{eq:best-coercivity}
 c_H=\frac{1}{2}+\frac{H}{3}-\frac{1}{3}\sqrt{H^2+3}>0.
\end{equation}
Moreover, as $H\downarrow1/4$,
\begin{equation}\label{eq:coercivity-asymptotic}
 c_H=\frac{2}{7}\bra*{H-\frac{1}{4}} +O\bra*{\bra*{H-\frac{1}{4}}^2}.
\end{equation}
\end{proposition}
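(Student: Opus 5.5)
The plan is to reduce the estimate to a one-variable minimization on the Fourier side, exactly as in the Plancherel computation of the introduction. Set $\bu=\m-\ein_3\in H^2(\R^2;\R^3)$. From $1-m_3=\frac12|\bu|^2$ we get
\[
E_H(\m)=\frac12\int_{\R^2}\bra*{|\xi|^4-|\xi|^2+H}|\widehat\bu(\xi)|^2\dif\xi .
\]
I use the $H^2$-norm convention $\norm{\bu}_{H^2}^2=\norm{\bu}_{L^2}^2+\norm{\nabla\bu}_{L^2}^2+\norm{D^2\bu}_{L^2}^2$, which is the convention behind the density $\rho_k$ in \eqref{eq:rho-density}. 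Since $\sum_{i,j}|\xi_i\xi_j|^2=|\xi|^4$, this norm equals $\int(1+|\xi|^2+|\xi|^4)|\widehat\bu|^2\dif\xi$. Hence the best constant is
\[
c_H=\frac12\inf_{s\ge0} g(s),\qquad g(s)=\frac{s^2-s+H}{s^2+s+1},\quad s=|\xi|^2 .
\]
It then suffices to show this infimum equals $2c_H$ with $c_H$ as in \eqref{eq:best-coercivity}. Optimality is not needed for the inequality, but it explains the formula.

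To compute the infimum, I look for the largest $\gamma$ such that $s^2-s+H-\gamma(s^2+s+1)\ge0$ for all $s\ge0$. This is the quadratic $(1-\gamma)s^2-(1+\gamma)s+(H-\gamma)$. The critical $\gamma$ is the one at which it has a double root:
\[
(1+\gamma)^2=4(1-\gamma)(H-\gamma)\iff 3\gamma^2-(6+4H)\gamma+4H-1=0 .
\]
The discriminant of this equation in $\gamma$ is $16(H^2+3)$. Its smaller root is
\[
\gamma_H=1+\tfrac{2H}{3}-\tfrac23\sqrt{H^2+3}=2c_H .
\]
Two checks are needed.
\begin{itemize}
\item[(a)] $\gamma_H\in(0,1)$. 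Positivity is equivalent to $3+2H>2\sqrt{H^2+3}$, i.e.\ $9+12H>12$, i.e.\ $H>1/4$. The bound $\gamma_H<1$ is equivalent to $H<\sqrt{H^2+3}$.
\item[(b)] The double root $s_0=\frac{1+\gamma_H}{2(1-\gamma_H)}$ is positive, so it lies in the admissible range $s\ge0$.
\end{itemize}
Given these, the leading coefficient $1-\gamma_H$ is positive, so the quadratic with parameter $\gamma_H$ is a nonnegative perfect square. This gives $g\ge\gamma_H$ on $[0,\infty)$, with equality at $s_0$. The endpoint values $g(0)=H$ and $g(\infty)=1$ are therefore irrelevant. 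The step needing most care is choosing the smaller root and confirming (a) and (b). Choosing the larger root would give a value exceeding $g(\infty)=1$, so it cannot be the infimum.

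Integrating $g(|\xi|^2)\ge 2c_H$ against $(1+|\xi|^2+|\xi|^4)|\widehat\bu|^2$ yields the inequality. For \eqref{eq:coercivity-asymptotic}, note that $c_{1/4}=\frac12+\frac1{12}-\frac13\cdot\frac74=0$. Moreover,
\[
\frac{\dif c_H}{\dif H}=\frac13-\frac{H}{3\sqrt{H^2+3}},
\]
which at $H=1/4$ equals $\frac13-\frac1{21}=\frac27$. The map $H\mapsto c_H$ is smooth near $1/4$, so Taylor's theorem gives the stated expansion with an $O((H-\frac14)^2)$ remainder.
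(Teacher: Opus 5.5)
Your proposal is correct and follows the paper's proof. Plancherel, with the same $H^2$ norm $\int(1+|\xi|^2+|\xi|^4)|\widehat{\bu}|^2\dif\xi$, reduces the estimate to computing $\inf_{s\ge0}(s^2-s+H)/(s^2+s+1)=2c_H$, and a Taylor expansion at $H=1/4$ then gives the asymptotics. The only difference is cosmetic: you find the minimum through the double-root condition for $(1-\gamma)s^2-(1+\gamma)s+(H-\gamma)$ rather than by differentiating the quotient, and your double root $s_0=\frac{1+\gamma_H}{2(1-\gamma_H)}$ equals the paper's minimizer $\frac12\bigl(H-1+\sqrt{H^2+3}\bigr)$.
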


\begin{proof}
Let $\m\in\cA$ and write $\bu=\m-\ein_3$. Since $|\m|=1$, $|\bu|^2=2(1-m_3)$. Plancherel's theorem therefore gives
\begin{align*}
 2E_H(\m)
=\int_{\R^2}\bra*{ |\Delta \bu|^2-|\nabla \bu|^2+H|\bu|^2}\dif x
=\int_{\R^2} \bra*{ |\xi|^4-|\xi|^2+H} |\widehat{\bu}(\xi)|^2\dif \xi.
\end{align*}
In this proof, we use the equivalent $H^2$ norm
\[
 \norm{\bv}_{H^2}^2=\int_{\R^2}  (1+|\xi|^2+|\xi|^4)|\widehat \bv(\xi)|^2\dif\xi.
\]
Writing $s=|\xi|^2$, it follows that
\[
 2E_H(\m)\ge \bra*{\inf_{s \ge 0} \frac{s^2-s+H}{1+s+s^2}} \norm{\m-\ein_3}_{H^2(\R^2)}^2.
\]
The quotient attains its minimum at $(H-1+\sqrt{H^2+3})/2$. Substitution gives \eqref{eq:best-coercivity}, and Taylor expansion at $H=1/4$ gives \eqref{eq:coercivity-asymptotic}.
\end{proof}

Proposition~\ref{prop:best-coercivity} shows that the $H^2$-coercivity constant collapses as $H\downarrow1/4$. The following construction makes this loss of endpoint coercivity explicit.

\begin{proposition}\label{prop:weyl}
There exist $R_0,c_0,C>0$ and a family $(\bw_R)_{R \ge R_0} \subset \cA_0$ such that
\begin{equation}\label{eq:weyl-energy}
\|\bw_R-\ein_3\|_{H^2(\R^2)}\ge c_0, \qquad
0\le E_{1/4}(\bw_R)\le CR^{-2}.
\end{equation}
Moreover, for every $A>0$, there exists a constant $C_A>0$, independent of $R$, such that
\begin{equation}\label{eq:weyl-vanishing}
\sup_{y \in \R^2} \int_{B_A(y)} \bra*{|\bw_R - \ein_3|^2 + |\nabla \bw_R|^2 +|D^2 \bw_R|^2} \dif x \le C_A R^{-2}.
\end{equation}
At fields of the form $H=1/4+\delta$, the same family satisfies
\begin{equation}\label{eq:weyl-delta}
 0 \le E_{1/4+\delta}(\bw_R) \le C\bra*{R^{-2}+\delta},
\end{equation}
for every $\delta \ge 0$ and every $R \ge R_0$.
\end{proposition}

\begin{proof}
Fix a nonzero function $\chi \in C_c^\infty(\R^2)$ and set $k=1/\sqrt{2}$. Define
\[
a_R(x)=\frac{1}{R} \chi\bra*{\frac{x}{R}}, \qquad v_R(x)=a_R(x)(\cos(k x_1), \sin(k x_1))
\]
and
\[
b_R(x) = \sqrt{1-a_R^2(x) }-1, \qquad \bw_R(x) = \ein_3 + \bra*{v_R(x), b_R(x)}.
\]
For large $R>0$, $\bw_R \in \cA_0$, since its image lies in the northern hemisphere. A change of variables gives $\norm{v_R}_{L^2(\R^2)} = \norm{\chi}_{L^2(\R^2)}>0$. Consequently,
\[
\|\bw_R-\ein_3\|_{H^2(\R^2)} \ge \norm{\chi}_{L^2(\R^2)} =: c_0>0.
\]
For $\ell=0,1,2$,
\[
|D^{\ell} v_R(x)| \le C R^{-1}, \qquad |D^{\ell} b_R(x)| \le C R^{-2}.
\]
Therefore, integration over any fixed-radius ball gives \eqref{eq:weyl-vanishing}.
Direct differentiation gives
\[
\norm{(\Delta+k^2)v_R}_{L^2(\R^2)} \le 2k \norm{\partial_1 a_R}_{L^2(\R^2)}  + \norm{\Delta a_R}_{L^2(\R^2)} \le C R^{-1},
\]
and the chain rule gives
\[
\norm{(\Delta+k^2)b_R}_{L^2(\R^2)}\le C R^{-1}.
\]
Since $k^2=1/2$, the endpoint identity \eqref{eq:endpoint-residual} gives
\[
0 \le 2E_{1/4}(\bw_R) =\norm*{\bra*{\Delta+\frac{1}{2}}v_R}_{L^2(\R^2)}^2 + \norm*{\bra*{\Delta+\frac{1}{2}}b_R}_{L^2(\R^2)}^2 \le C R^{-2}.
\]
This completes the proof of \eqref{eq:weyl-energy}. Finally,
\[
E_{1/4+\delta}(\bw_R) = E_{1/4}(\bw_R) + \delta \int_{\R^2} 1- w_{R,3} \dif x,
\]
and
\[
\int_{\R^2} 1- w_{R,3} \dif x = \int_{\R^2} 1-\sqrt{1-a_R^2} \dif x \le \int_{\R^2} a_R^2 \dif x = \norm{\chi}^2_{L^2(\R^2)}.
\]
After enlarging $C$ if necessary, this proves \eqref{eq:weyl-delta}.
\end{proof}

For all sufficiently small $\delta>0$, taking $R=\delta^{-1/2}$ in \eqref{eq:weyl-delta} gives $ E_{1/4+\delta}(\bw_R) \le 2C\delta \to 0$ as $\delta \downarrow 0$. At the same time, \eqref{eq:weyl-energy} and \eqref{eq:weyl-vanishing} show that the total $H^2$ mass remains bounded away from zero, while its mass in every fixed-radius ball tends to zero uniformly with respect to its center. Thus the coercivity estimate used to exclude degree-zero remainders in Section~\ref{sec:existence} is not uniform as $H\downarrow1/4$.

\subsection{Instability below the threshold}

The Weyl family constructed in Proposition~\ref{prop:weyl} is the vanishing-amplitude endpoint analogue of the conical-spiral background below the threshold. Both are built from the helical mode $k=1/\sqrt{2}$, whose quadratic energy vanishes at $H=1/4$ and becomes negative for $H<1/4$. By filling an expanding region with a fixed small amplitude of this mode, we obtain energies tending to $-\infty$.

\begin{proposition}\label{prop:below-threshold}
If $0<H<1/4$, then $I_q(H)=-\infty$ for every $q\in\Z$.
\end{proposition}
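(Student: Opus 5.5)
The plan is to mimic the Weyl construction of Proposition~\ref{prop:weyl}, but with a \emph{fixed} small amplitude on an expanding region, and then to attach a fixed degree-$q$ configuration far away.

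\textbf{Degree-zero building block.} Fix $\chi\in C_c^\infty(\R^2)$ with $0\le\chi\le1$ and $\chi=1$ on $B_1$, a small amplitude $a_0\in(0,1/2]$, and $k=1/\sqrt2$. Set
\[
a_R(x)=a_0\chi(x/R),\qquad v_R=a_R(\cos(kx_1),\sin(kx_1)),\qquad b_R=\sqrt{1-a_R^2}-1,
\]
and $\bw_R=\ein_3+(v_R,b_R)$. The image lies in the northern hemisphere, so $\bw_R\in\cA_0$. Write $\bu=\bw_R-\ein_3$ and use the formula from the introduction,
\[
E_H(\bw_R)=\tfrac12\int|\Delta\bu|^2-|\nabla\bu|^2+H|\bu|^2 .
\]

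\textbf{Estimating the energy.} On $B_R$ we have $a_R\equiv a_0$. There $b_R$ is constant, so its derivatives vanish, and $v_R$ is an exact helical mode. This gives $|\Delta v_R|^2=k^4a_0^2$ and $|\nabla v_R|^2=k^2a_0^2$. Moreover $|\bu|^2=2(1-w_3)=2(1-\sqrt{1-a_0^2})\le a_0^2+Ca_0^4$. Hence the energy density on $B_R$ is at most
\[
\tfrac12 a_0^2\bigl(k^4-k^2+H\bigr)+Ca_0^4=\tfrac12a_0^2\bigl(H-\tfrac14\bigr)+Ca_0^4 .
\]
Choose $a_0$ so small that this is at most $-\tfrac14a_0^2(\tfrac14-H)=:-\sigma<0$. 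On the transition annulus $B_{2R}\setminus B_R$ (taking $\operatorname{supp}\chi\subset B_2$), all derivatives of $a_R$ are $O(R^{-1})$, so the energy density is bounded uniformly in $R$. The annulus has area $O(R^2)$, but it can be controlled as follows.

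The main obstacle is that both the bulk term and the annulus term scale like $R^2$. To handle this, use a cutoff $\chi_R$ equal to $1$ on $B_R$ with transition width $1$ rather than $R$: $\chi_R=1$ on $B_R$, $\chi_R=0$ off $B_{R+1}$, with derivatives bounded uniformly in $R$. The annulus then has area $O(R)$ and uniformly bounded density, so it contributes $O(R)$. Since $a_R\le a_0\le 1/2$, the chain-rule bounds on $b_R$ remain uniform. Altogether,
\[
E_H(\bw_R)\le -\sigma\pi R^2+CR\to-\infty .
\]
This settles $q=0$.

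\textbf{Arbitrary degree.} For $q\neq0$, fix any $\m_q\in\cA_q$ with $\m_q-\ein_3$ compactly supported, for instance a radial map of the same type as the trial map $\bw_\rho$ with $f(0)=|q|\pi$, composed with a reflection if needed. Translate $\m_q$ so its support is disjoint from $B_{R+1}$, say centered at $(3R,0)$. The glued map equals $\bw_R$ near $B_{R+1}$, equals the translated $\m_q$ near its support, and equals $\ein_3$ elsewhere. It lies in $\cA$, its degree is $0+q=q$ because $Q$ is additive over disjoint supports, and its energy is $E_H(\bw_R)+E_H(\m_q)\to-\infty$. Therefore $I_q(H)=-\infty$ for every $q\in\Z$.
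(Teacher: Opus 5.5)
Your construction is essentially the paper's own proof. It uses the same helical mode $k=1/\sqrt2$ with a fixed small amplitude on $B_R$, and a width-one cutoff so that the transition annulus costs only $O(R)$ against a bulk gain of order $-R^2$. It then glues a compactly supported degree-$q$ map far away through the common value $\ein_3$. Your bound $\tfrac12a_0^2(H-\tfrac14)+Ca_0^4$ is just a Taylor version of the paper's exact bulk density $-a^2/8+H(1-\sqrt{1-a^2})$.

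One correction: the example you give for $\m_q$ does not have degree $q$. For a radial map $(\sin f\cos\varphi,\sin f\sin\varphi,\cos f)$ with $f=0$ at the outer radius, one has $Q=\tfrac12(\cos f(0)-1)$. So $f(0)=|q|\pi$ gives degree $-1$ for odd $|q|$ and degree $0$ for even $|q|$. The degree is governed by the azimuthal winding, not by how many times $f$ wraps. A correct choice is $|q|$ translated copies of the paper's trial map $\bw_\rho\in\cA_{-1}$ with disjoint supports, which has degree $-|q|$, composed with a reflection when $q>0$. Only the existence of some compactly supported element of $\cA_q$ is needed, and the paper simply asserts it. So this slip does not affect the rest of your argument.
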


\begin{proof}
Fix $H<1/4$. Choose $\eta_R \in C_c^\infty(B_{R+1})$ satisfying
\[
\eta_R(x)\equiv 1 \text{ on } B_R, \quad 0 \le \eta_R(x) \le 1, \quad\text{and }
\|D\eta_R\|_{L^\infty}+\|D^2\eta_R\|_{L^\infty}\le C
\]
with $C$ independent of $R$. For $k=1/\sqrt{2}$ and $a \in (0,1/2)$, define
\[
\m_R = \bra*{a \eta_R(x) \cos(k x_1), \, a \eta_R(x) \sin(k x_1), \, \sqrt{1-a^2 \eta_R(x)^2} }.
\]
Its image lies in the northern hemisphere, so $Q(\m_R)=0$. On $B_R$, the energy density of $\m_R$ is
\[
e_{H,a} = -\frac{a^2}{8} + H(1-\sqrt{1-a^2}).
\]
Since 
\[
\frac{e_{H,a}}{a^2} = -\frac{1}{8}+\frac{H}{1+\sqrt{1-a^2}} \longrightarrow -\frac{1}{8}+\frac{H}{2} = \frac{1}{2}\bra*{H-\frac{1}{4}} \quad\text{as } a \to 0,
\]
we can choose $a$ so small that $e_{H,a} <0$. The transition annulus has area $O(R)$ and uniformly bounded derivatives of $\m_R$. Thus
\[
E_H(\m_R) \le \pi R^2 e_{H,a} + CR \longrightarrow -\infty.
\]
For arbitrary $q \in \Z$, choose a smooth map in $\cA_q$ that equals $\ein_3$ outside a compact set.  After translating this map so that its nonconstant region is disjoint from that of $\m_R$, glue both maps through their common constant value $\ein_3$. The resulting map has degree $q$, and its energy tends to $-\infty$.
\end{proof}

\subsection{The endpoint topological barrier and right continuity}

Thus the degree-zero sequences in Proposition \ref{prop:weyl} may have arbitrarily small endpoint energy, whereas every nonzero-degree configuration has positive energy. Despite the loss of compactness caused by these sequences, the degree-$-1$ infimum remains continuous as the threshold is approached from the coercive side.

\begin{proposition}\label{prop:right-continuous}
The endpoint infimum is finite and
\[
\lim_{H \downarrow 1/4} I_{-1}(H)=I_{-1}(1/4) \ge\frac{b_*}{2}>0
\]
\end{proposition}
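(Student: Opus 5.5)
The plan has three parts: a lower bound at the endpoint, monotonicity in $H$, and a limsup via test maps.

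\emph{Lower bound.} We work directly in the original variables. Let $\m\in\cA_{-1}$ and set $\tilde\n(x)=\m(\sqrt2\,x)$; then $Q(\tilde\n)=-1$. The endpoint identity \eqref{eq:endpoint-residual} gives
\[
E_{1/4}(\m)=\tfrac12\norm{(\Delta+\tfrac12)(\m-\ein_3)}_{L^2}^2 .
\]
Under the dilation, $\Delta_x$ becomes $\tfrac12\Delta_y$ and $\dif x=2\dif y$, so
\[
E_{1/4}(\m)=\tfrac12\cdot\tfrac14\cdot 2\,\norm{\cR(\tilde\n)}_{L^2}^2=\tfrac14\norm{\cR(\tilde\n)}^2_{L^2}.
\]
Lemma~\ref{la:south-cost} gives $\tfrac12\norm{\cR(\tilde\n)}^2_{L^2}\ge b_*$, hence $E_{1/4}(\m)\ge b_*/2$. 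Equivalently, $E_{1/4}(\m)=\tfrac12\cF_2(\n)$ with $\n(x)=\m(x/\sqrt2)$, and \eqref{eq:lower-bound} at $\eps=2$ gives the same bound. Taking the infimum, $I_{-1}(1/4)\ge b_*/2>0$. Finiteness follows because any fixed $\bw\in\cA_{-1}$, for instance the radial trial map, has finite $E_{1/4}(\bw)$.

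\emph{Monotonicity.} Since $E_H(\m)=E_{1/4}(\m)+(H-1/4)\int(1-m_3)\dif x$ and $1-m_3\ge0$, the map $H\mapsto E_H(\m)$ is nondecreasing for each $\m$. Hence $I_{-1}(H)$ is nondecreasing in $H$, and $I_{-1}(H)\ge I_{-1}(1/4)$ for all $H>1/4$. The right limit $L:=\lim_{H\downarrow1/4}I_{-1}(H)$ therefore exists and satisfies $L\ge I_{-1}(1/4)$.

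\emph{Upper bound.} Fix $\eta>0$ and pick $\m\in\cA_{-1}$ with $E_{1/4}(\m)\le I_{-1}(1/4)+\eta$. Since $\m-\ein_3\in H^2$, the quantity $P(\m):=\int(1-m_3)\dif x=\tfrac12\norm{\m-\ein_3}_{L^2}^2$ is finite. Then
\[
I_{-1}(H)\le E_H(\m)=E_{1/4}(\m)+(H-\tfrac14)P(\m)\le I_{-1}(1/4)+\eta+(H-\tfrac14)P(\m).
\]
Letting $H\downarrow1/4$ and then $\eta\to0$ gives $L\le I_{-1}(1/4)$. Combined with the monotonicity step, this yields $L=I_{-1}(1/4)$, and the first step gives the positivity.

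The only point needing care is the bookkeeping of constants in the dilation, since the factor decides whether the bound is $b_*/2$ or something else. I expect the route through $\cF_2$ and \eqref{eq:exact} with $\eps=2$, i.e.\ $H=1/4$ and $\sqrt H=1/2$, to be the cleanest check, and it confirms $b_*/2$. Unlike the coercive regime, no compactness is needed: the almost-minimizer $\m$ is fixed before $H$ varies, so the degeneration of coercivity in Proposition~\ref{prop:best-coercivity} plays no role.
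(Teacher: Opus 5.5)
Your proposal is correct and follows the paper's proof essentially step by step. The three steps match: the dilation $y\mapsto\sqrt2\,y$ reducing the endpoint bound to Lemma~\ref{la:south-cost}, monotonicity in $H$ from the nonnegative Zeeman term, and the $\limsup$ bound via a fixed near-minimizer of $E_{1/4}$. One small slip: in your ``equivalently'' line the dilation is inverted. At $H=1/4$, \eqref{eq:exact} requires $\n(y)=\m(\sqrt2\,y)$, which is your $\tilde\n$, not $\n(x)=\m(x/\sqrt2)$. Your main computation uses the correct direction, so the argument is unaffected.
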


\begin{proof}
The construction leading to \eqref{eq:trial-energy} provides a comparison map $\bw_{\rho_*} \in \cA_{-1}$ with finite $\cF_2$-energy. Consequently,
\[
\widehat I_{-1}(2)\le \cF_2(\bw_{\rho_*}) <\infty.
\]
Applying the scaling identity \eqref{eq:exact} at $H=1/4$, where $\eps=2$, gives 
\[
I_{-1}(1/4)=\frac{1}{2}\widehat{I}_{-1}(2)<\infty.
\]
Now let $\m\in\cA$ satisfy $Q(\m)\neq0$, and define $\n(y)=\m(\sqrt{2}\,y)$.
Then $\n \in \cA$, $Q(\n)=Q(\m)$ and \eqref{eq:exact} gives
\[
E_{1/4}(\m)=\frac{1}{2}\cF_2(\n).
\]
Lemma~\ref{la:south-cost} therefore gives
\begin{equation}\label{eq:endpoint-bound}
E_{1/4}(\m)
\ge \frac{ b_*}{2}
=\frac{4\pi\alpha^2}{1-\alpha^2}>0.
\end{equation}
For every $H>1/4$ and every $\m\in\cA_{-1}$,
\[
E_H(\m) = E_{1/4}(\m) +\bra*{H-\frac{1}{4}}  \int_{\R^2}(1-m_3)\dif x,
\]
and hence
\[
I_{-1}(H)\ge I_{-1}(1/4).
\]
Conversely, for any $\eta>0$, choose $\m_\eta\in\cA_{-1}$ such that
\[
E_{1/4}(\m_\eta)<I_{-1}(1/4)+\eta.
\]
For $H>1/4$,
\[
I_{-1}(H) \le E_H(\m_\eta) = E_{1/4}(\m_\eta) +\bra*{H-\frac14} \int_{\R^2}(1-m_{\eta,3})\dif x.
\]
Therefore,
\[
\limsup_{H\downarrow1/4}I_{-1}(H) \le E_{1/4}(\m_\eta) <I_{-1}(1/4)+\eta.
\]
Letting $\eta\downarrow0$ proves
\[
\lim_{H\downarrow1/4}I_{-1}(H)=I_{-1}(1/4).
\]
\end{proof}

\begin{proof}[Proof of Theorem~\ref{thm:threshold}]
Part (i) is Proposition~\ref{prop:below-threshold}, part (ii) follows from \eqref{eq:endpoint-bound},  art (iii) is Proposition~\ref{prop:weyl} and and part (iv) follows from Proposition~\ref{prop:right-continuous}. 
\end{proof}

\section{The strong-field asymptotics}\label{sec:strong-field}

We now turn to the strong-field regime $H \to \infty$. Under the degree-preserving dilation $\m(x)=\n(H^{1/4}x)$, we have
\[
E_H(\m) = \sqrt{H}\cF_\eps(\n), \quad \cF_\eps(\n)=\cF_0(\n)-\eps D(\n), \quad \eps=H^{-1/2},
\]
where
\[
D(\n)=\frac12\int_{\R^2}|\nabla\n|^2\dif x.
\]
Recall that $\cG$ denotes the set of degree-$-1$ minimizers of $\cF_0$. By Theorem~\ref{thm:existence}, $\cG$ is nonempty and compact in $H^2$ modulo translations.

The identity $\cF_\eps=\cF_0-\eps D$ shows that the rescaled problem is a small perturbation of the limiting functional $\cF_0$. The comparison \eqref{eq:F0-comparison} turns minimizers of the perturbed problems into minimizing sequences for $\cF_0$, to which the strong $H^2$-compactness modulo translations established in Theorem~\ref{thm:existence} applies. These yield the leading-order limit, first-order profile selection, and the concentration and core-size laws in the original variables.

\subsection{Leading-order asymptotics and compactness}

We first identify the leading-order minimum and the limiting profiles of the corresponding rescaled minimizers.

\begin{proposition}\label{prop:leading}
As $\eps \downarrow 0$,
\begin{equation}\label{eq:Ihat-leading}
\widehat{I}_{-1}(\eps) \longrightarrow \widehat{I}_{-1}(0).
\end{equation}
Consequently, as $H \to \infty$,
\begin{equation}\label{eq:I-leading}
\frac{I_{-1}(H)}{\sqrt{H}}  \longrightarrow \widehat{I}_{-1}(0).
\end{equation}
\end{proposition}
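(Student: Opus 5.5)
The plan is a two-sided sandwich for $\widehat I_{-1}(\eps)$, followed by a direct translation through the scaling identity \eqref{eq:exact}.

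\emph{Upper bound.} Since $D\ge 0$, we have $\cF_\eps=\cF_0-\eps D\le \cF_0$ pointwise on $\cA$. Taking the infimum over $\cA_{-1}$ gives $\widehat I_{-1}(\eps)\le \widehat I_{-1}(0)$ for every $\eps\in[0,2]$.

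\emph{Lower bound.} The comparison \eqref{eq:F0-comparison} gives $\cF_\eps(\n)\ge (1-\eps/2)\cF_0(\n)$ for every $\n\in\cA$. Taking the infimum over $\cA_{-1}$ yields
\[
\widehat I_{-1}(\eps)\ge \bra*{1-\frac{\eps}{2}}\widehat I_{-1}(0).
\]
We need $\widehat I_{-1}(0)$ to be finite, and this follows from the trial map bound \eqref{eq:strict-binding} at $\eps=0$. Combining the two bounds,
\[
0\le \widehat I_{-1}(0)-\widehat I_{-1}(\eps)\le \frac{\eps}{2}\widehat I_{-1}(0)\longrightarrow 0 .
\]
This proves \eqref{eq:Ihat-leading}, and in fact gives the rate $O(\eps)$.

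Alternatively, one could invoke the last assertion of Theorem~\ref{thm:uniform-compactness} with $K=[0,1]$. However, the elementary sandwich above is cleaner and avoids any subsequence extraction.

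\emph{Translation to $I_{-1}(H)$.} For $H>1/4$ we set $\eps=H^{-1/2}$, so that $\eps\downarrow 0$ exactly as $H\to\infty$. By \eqref{eq:exact}, $I_{-1}(H)/\sqrt H=\widehat I_{-1}(H^{-1/2})$, and \eqref{eq:I-leading} then follows directly from \eqref{eq:Ihat-leading}.

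There is no real obstacle here. The only point requiring care is that the upper bound uses the sign $D\ge0$, while the lower bound relies on the interpolation inequality \eqref{eq:interpolation} through \eqref{eq:F0-comparison}.
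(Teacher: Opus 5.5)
Your proof is correct and is essentially the paper's argument: both take the infimum of the two-sided comparison \eqref{eq:F0-comparison} over $\cA_{-1}$ to get $(1-\eps/2)\widehat I_{-1}(0)\le\widehat I_{-1}(\eps)\le\widehat I_{-1}(0)$, then let $\eps\downarrow0$ and transfer to $I_{-1}(H)$ via \eqref{eq:exact} with $\eps=H^{-1/2}$. Your explicit remark that $\widehat I_{-1}(0)<\infty$ by the trial map $\bw_{\rho_*}$ is a small point the paper leaves implicit.
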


\begin{proof}
Taking infimum in \eqref{eq:F0-comparison} over $\cA_{-1}$ gives
\[
\bra*{1-\frac{\eps}{2}} \widehat{I}_{-1}(0) \le \widehat{I}_{-1}(\eps) \le \widehat{I}_{-1}(0).
\]
Letting $\eps\downarrow0$ gives \eqref{eq:Ihat-leading}, while \eqref{eq:I-leading} follows from \eqref{eq:exact} with $\eps=H^{-1/2}$.
\end{proof}

Energy convergence alone does not control the minimizing profiles because of translation invariance. The comparison \eqref{eq:F0-comparison} instead turns $\cF_{\eps_k}$-minimizers into an $\cF_0$-minimizing sequence, yielding strong compactness modulo translations.

\begin{proposition}\label{prop:profile-compactness}
Let $\eps_k \downarrow 0$ and let $\n_k \in \cA_{-1}$ minimize $\widehat{I}_{-1}(\eps_k)$. Then
\begin{equation}\label{eq:nk-conv}
\cF_0(\n_k) \to \widehat{I}_{-1}(0)
\end{equation}
Consequently, after passing to a subsequence, there exist translations $y_k \in \R^2$ and $\n_* \in \cG$ such that
\begin{equation}\label{eq:profile-H2}
\n_k(\cdot + y_k) - \ein_3 \longrightarrow \n_* - \ein_3 \quad\text{strongly in } H^2.
\end{equation}
Moreover, the full sequence satisfies
\begin{equation}\label{eq:distance-G}
 \inf_{\substack{y\in\R^2,\,\bs g\in\cG}}  \norm{\n_k(\,\cdot+y)-\bs g}_{H^2}\longrightarrow 0.
\end{equation}
\end{proposition}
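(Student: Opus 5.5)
The plan is to sandwich $\cF_0(\n_k)$ using the comparison \eqref{eq:F0-comparison}, and then feed the resulting $\cF_0$-minimizing sequence into Theorem~\ref{thm:uniform-compactness} with $\eps_k\equiv0$.

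\textbf{Energy convergence.} Since $\n_k$ minimizes $\widehat I_{-1}(\eps_k)$, the left inequality in \eqref{eq:F0-comparison} gives
\[
\bra*{1-\frac{\eps_k}{2}}\cF_0(\n_k)\le \cF_{\eps_k}(\n_k)=\widehat I_{-1}(\eps_k)\le \widehat I_{-1}(0).
\]
Also $\cF_0(\n_k)\ge \widehat I_{-1}(0)$, because $\n_k\in\cA_{-1}$. Dividing by $1-\eps_k/2\to1$ therefore squeezes $\cF_0(\n_k)$ to $\widehat I_{-1}(0)$, which is \eqref{eq:nk-conv}. Alternatively, one can write $\cF_0(\n_k)=\widehat I_{-1}(\eps_k)+\eps_k D(\n_k)$ and use the bound $D(\n_k)\le\frac12\cF_0(\n_k)\le C$ coming from \eqref{eq:interpolation}, together with Proposition~\ref{prop:leading}.

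\textbf{Compactness.} By \eqref{eq:nk-conv}, $(\n_k)$ is a minimizing sequence for $\widehat I_{-1}(0)$. Apply Theorem~\ref{thm:uniform-compactness} with $K=\{0\}$ and $\eps_k\equiv0$; hypothesis \eqref{eq:almost-minimizers} is exactly \eqref{eq:nk-conv}. This yields a subsequence, translations $y_k$, and $\n_*\in\cA_{-1}$ with $\cF_0(\n_*)=\widehat I_{-1}(0)$, so $\n_*\in\cG$, and with strong $H^2$ convergence. That is \eqref{eq:profile-H2}.

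\textbf{Full-sequence statement \eqref{eq:distance-G}.} Argue by contradiction. If \eqref{eq:distance-G} fails, there is $\delta>0$ and a subsequence along which the infimum stays $\ge\delta$. This subsequence still consists of minimizers with $\eps_k\downarrow0$, so the previous step applies to it and produces a further subsequence with $\norm{\n_k(\cdot+y_k)-\n_*}_{H^2}\to0$ for some $\n_*\in\cG$. Choosing $y=y_k$ and $\bs g=\n_*$ in the infimum contradicts the lower bound $\delta$.

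The only delicate point is a notational one: $\norm{\n_k(\cdot+y)-\bs g}_{H^2}$ is understood as $\norm{(\n_k(\cdot+y)-\ein_3)-(\bs g-\ein_3)}_{H^2}$. No real obstacle arises here, since all the compactness work is contained in Theorem~\ref{thm:uniform-compactness}.
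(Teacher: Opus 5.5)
Your proposal is correct and follows the paper's proof essentially step for step. It uses the same sandwich via \eqref{eq:F0-comparison} (you bound $\widehat I_{-1}(\eps_k)\le\widehat I_{-1}(0)$ directly rather than citing Proposition~\ref{prop:leading}), the same appeal to strong $H^2$ compactness of $\cF_0$-minimizing sequences (Theorem~\ref{thm:uniform-compactness} with $\eps_k\equiv0$, which is exactly how the paper's Theorem~\ref{thm:existence} at $\eps=0$ is obtained), and the same subsequence-contradiction argument for \eqref{eq:distance-G}; just state explicitly that the comparison and the division by $1-\eps_k/2$ are used only for $k$ large enough that $\eps_k<2$.
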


\begin{proof}

For all sufficiently large $k$, we have $\eps_k<2$. Since $\n_k\in\cA_{-1}$, the comparison \eqref{eq:F0-comparison} and the minimality of $\n_k$ give
\begin{align*}
\widehat I_{-1}(0) \le \cF_0(\n_k)
\le\frac{\cF_{\eps_k}(\n_k)}{1-\eps_k/2}=\frac{\widehat I_{-1}(\eps_k)}{1-\eps_k/2}
\longrightarrow \widehat I_{-1}(0),
\end{align*}
where the last convergence follows from Proposition $\ref{prop:leading}$. Hence \eqref{eq:nk-conv}, so $(\n_k)_{k\in\N}$ is a minimizing sequence for $\cF_0$. The strong $H^2$-compactness modulo translations given by Theorem $\ref{thm:existence}$ at $\eps=0$ therefore yields \eqref{eq:profile-H2}.

We prove \eqref{eq:distance-G} by contradiction. If it failed, there would exist $\delta>0$ and a subsequence $(k_j)_{j\in\N}$ such that
\[
\inf_{\substack{y\in\R^2,\, \bs g\in\cG}} \norm{\n_{k_j}(\,\cdot+y)-\bs g}_{H^2} \geq\delta
\]
for every $j\in\N$. Since $(\n_{k_j})_{j\in\N}$ is still a minimizing sequence for $\cF_0$, Theorem~\ref{thm:existence} gives, after passing to a further subsequence, translations $z_j\in\R^2$ and a profile $\bs g_*\in\cG$ such that
\[
\n_{k_j}(\,\cdot+z_j)-\ein_3 \longrightarrow
\bs g_*-\ein_3 \quad\text{strongly in }H^2(\R^2;\R^3).
\]
Consequently,
\[
\inf_{\substack{y\in\R^2,\, \bs g\in\cG}} \norm{\n_{k_j}(\,\cdot+y)-\bs g}_{H^2}
\le \norm{\n_{k_j}(\,\cdot+z_j)-\bs g_*}_{H^2} \longrightarrow 0,
\]
contradicting the assumed lower bound $\delta$. This proves \eqref{eq:distance-G}.
\end{proof}

\subsection{First-order expansion and profile selection}\label{sec:first-order}

Although the rescaled minimizers approach $\cG$, the leading-order limit does not determine which elements of $\cG$ are selected. Since $\cF_\eps=\cF_0-\eps D$, the first-order correction favors profiles with larger Dirichlet energy. We first show that $D$ attains its maximum on $\cG$.

\begin{lemma}\label{lem:Gstar}
There exists $\bs g_* \in \cG$ such that
\[
D(\bs g_*)= D_*= \sup_{\bs g \in \cG} D(\bs g).
\]
\end{lemma}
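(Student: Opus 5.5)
The plan is to take a maximizing sequence for $D$ on $\cG$ and pass to a limit using the compactness modulo translations available for $\cF_0$-minimizing sequences. Two inputs make this work: $\cG$ is nonempty by Theorem~\ref{thm:existence} at $\eps=0$, and $D$ is bounded on $\cG$, since \eqref{eq:interpolation} gives
\[
D(\bs g)=\frac12\norm{\nabla \bs g}_{L^2}^2\le \frac12\cF_0(\bs g)=\frac12\widehat I_{-1}(0)
\]
for every $\bs g\in\cG$. Hence $D_*$ is finite. It is also positive, because the topological bound gives $D(\bs g)\ge 4\pi$.

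First, I choose $\bs g_k\in\cG$ with $D(\bs g_k)\to D_*$. Each $\bs g_k$ satisfies $\cF_0(\bs g_k)=\widehat I_{-1}(0)$, so $(\bs g_k)$ is trivially a minimizing sequence for $\widehat I_{-1}(0)$. I then apply Theorem~\ref{thm:uniform-compactness} with $\eps_k\equiv 0$, or equivalently the compactness part of Theorem~\ref{thm:existence}. After passing to a subsequence, this yields translations $a_k$ and a map $\bs g_*\in\cA_{-1}$ such that $\bs g_k(\cdot+a_k)-\ein_3\to \bs g_*-\ein_3$ strongly in $H^2$. The same theorem gives $\cF_0(\bs g_*)=\widehat I_{-1}(0)$, so $\bs g_*\in\cG$.

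Second, I use that $D$ is translation invariant and continuous under strong $H^1$ convergence, and hence under strong $H^2$ convergence. This gives
\[
D(\bs g_*)=\lim_{k\to\infty}D(\bs g_k(\cdot+a_k))=\lim_{k\to\infty}D(\bs g_k)=D_*.
\]
So the supremum is attained, and $\cG_*$ is nonempty.

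I do not expect a genuine obstacle here. The only point that needs care is that compactness holds only modulo translations, and this is harmless because both $D$ and membership in $\cG$ are translation invariant. The substantive input is the exclusion of dichotomy and vanishing at $\eps=0$, which is already provided by Theorem~\ref{thm:uniform-compactness}.
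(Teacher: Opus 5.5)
Your proposal is correct and follows essentially the same route as the paper. Both bound $D$ on $\cG$ by $\tfrac12\widehat I_{-1}(0)$ via the interpolation estimate, take a maximizing sequence in $\cG$, which is trivially $\cF_0$-minimizing, and apply the $\eps=0$ compactness modulo translations. Both then conclude by translation invariance and $H^2$-continuity of $D$. Your extra remark that $D_*\ge 4\pi$ is a harmless refinement of the paper's $D_*\ge 0$.
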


\begin{proof}
It follows from the interpolation \eqref{eq:interpolation} that,  for every $\bs g\in\cG$, 
\[
D(\bs g) =\frac{1}{2}\|\nabla\bs g\|_{L^2(\R^2)}^2
\le \frac{1}{2}\cF_0(\bs g) =\frac{1}{2}\widehat I_{-1}(0).
\]
Therefore, $0\le D_*\le\frac{1}{2}\widehat I_{-1}(0)<\infty$.
Choose $(\bs g_n) \subset \cG$ with $D(\bs g_n) \to \sup_{\cG} D$. Since $(\bs g_n)$ is a minimizing sequence for $\cF_0$, Theorem~\ref{thm:existence} gives translations $y_n\in\R^2$, a subsequence, and some $\bs g_*\in\cG$ such that
\[
\bs g_n(\cdot+y_n) - \ein_3 \longrightarrow \bs g_* - \ein_3 \quad\text{strongly in }H^2(\R^2;\R^3).
\]
Since $D$ is translation invariant and continuous under strong $H^2$-convergence, $D(\bs g_n) \to D(\bs g_*) = \sup_{\cG} D$.
\end{proof}

Consequently, the selected set $\cG_*=\set*{\bs g \in \cG, D(\bs g)=D_*}$ is nonempty.
We now show that every $\cF_\eps$ minimizer asymptotically approaches $\cG_*$ and derive the corresponding first-order energy expansion.

\begin{theorem}\label{thm:first-order}
As $\eps \downarrow 0$,
\begin{equation}\label{eq:Ihat-first}
\widehat{I}_{-1}(\eps)=\widehat{I}_{-1}(0)-\eps D_* + o(\eps).
\end{equation}
For each $\eps\in(0,2)$, let $\n_\eps$ be any minimizer of $\widehat I_{-1}(\eps)$. Then
\begin{equation}\label{eq:D-select}
D(\n_\eps) \longrightarrow D_*,
\end{equation}
\begin{equation}\label{eq:F0-rate}
\cF_0(\n_\eps)-\widehat{I}_{-1}(0) = o(\eps),
\end{equation}
and
\begin{equation}\label{eq:distance-Gstar}
\inf_{\substack{y\in\R^2 ,\, \bs g\in\cG_*}} \norm*{\n_\eps(\cdot+y)-\bs g}_{H^2} \longrightarrow 0.
\end{equation}
\end{theorem}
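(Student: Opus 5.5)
The proof is a standard first-order $\Gamma$-type expansion. It pairs an upper bound from testing with elements of $\cG_*$ against a lower bound that uses the profile compactness of Proposition~\ref{prop:profile-compactness}.

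\emph{Upper bound.} Fix $\bs g_*\in\cG_*$, which exists by Lemma~\ref{lem:Gstar}. Since $\bs g_*\in\cA_{-1}$,
\[
\widehat I_{-1}(\eps)\le \cF_\eps(\bs g_*)=\widehat I_{-1}(0)-\eps D_*.
\]

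\emph{Lower bound.} Let $\n_\eps$ be a minimizer. Minimality of $\n_\eps$ together with $\cF_0(\n_\eps)\ge\widehat I_{-1}(0)$ gives
\[
\widehat I_{-1}(0)-\eps D(\n_\eps)\le \cF_0(\n_\eps)-\eps D(\n_\eps)=\widehat I_{-1}(\eps)\le \widehat I_{-1}(0)-\eps D_*.
\]
Rearranging yields the two key inequalities
\begin{equation*}
D(\n_\eps)\ge D_*,\qquad 0\le \cF_0(\n_\eps)-\widehat I_{-1}(0)\le \eps\bigl(D(\n_\eps)-D_*\bigr).
\end{equation*}
Consequently everything reduces to proving $\limsup_{\eps\downarrow0}D(\n_\eps)\le D_*$. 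That bound gives \eqref{eq:D-select}, then \eqref{eq:F0-rate}, and finally \eqref{eq:Ihat-first} via $\widehat I_{-1}(\eps)=\cF_0(\n_\eps)-\eps D(\n_\eps)=\widehat I_{-1}(0)-\eps D_*+o(\eps)$.

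\emph{Selection.} To get $\limsup D(\n_\eps)\le D_*$, take any sequence $\eps_k\downarrow0$ along which $D(\n_{\eps_k})$ converges to the limsup. By Proposition~\ref{prop:profile-compactness}, after passing to a subsequence there are translations $y_k$ with $\n_{\eps_k}(\cdot+y_k)\to\n_*\in\cG$ strongly in $H^2$. Since $D$ is translation invariant and continuous in $H^2$, $D(\n_{\eps_k})\to D(\n_*)\le D_*$. Combined with $D(\n_\eps)\ge D_*$, this also shows $D(\n_*)=D_*$, i.e.\ every such limit profile lies in $\cG_*$.

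\emph{Distance to $\cG_*$.} Argue by contradiction as in the proof of \eqref{eq:distance-G}. Suppose a sequence $\eps_j\downarrow0$ stays at distance at least $\delta$ from the translates of $\cG_*$. Proposition~\ref{prop:profile-compactness} gives a further subsequence converging in $H^2$ modulo translations to some $\n_*\in\cG$, and by the preceding step $\n_*\in\cG_*$. This contradicts the lower bound $\delta$ and proves \eqref{eq:distance-Gstar}.

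The only delicate point is this use of strong $H^2$ compactness. Mere weak convergence would lose control of $D$, but Proposition~\ref{prop:profile-compactness} supplies strong convergence, so there is no real obstacle.
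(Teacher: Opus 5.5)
Your proposal is correct and follows the paper's proof essentially step for step. It uses the same comparison with $\bs g_*\in\cG_*$ to obtain $0\le \cF_0(\n_\eps)-\widehat I_{-1}(0)\le\eps\bigl(D(\n_\eps)-D_*\bigr)$, and the same appeal to Proposition~\ref{prop:profile-compactness} plus $H^2$-continuity of $D$ to force $D(\n_\eps)\to D_*$ (you via a limsup-realizing sequence, the paper by contradiction), followed by the same contradiction argument for \eqref{eq:distance-Gstar}.
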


\begin{proof}
Let $\bs g_* \in \cG_*$. The minimality of $\n_\eps$ gives
\[
\cF_0(\n_\eps) -\eps D(\n_\eps) = \cF_\eps(\n_\eps) = \widehat{I}_{-1}(\eps) \le  \cF_\eps(\bs g_*) = \widehat{I}_{-1}(0)-\eps D_*.
\]
Since $\cF_0(\n_\eps) \ge \widehat{I}_{-1}(0)$,
\begin{equation}\label{eq:key-selection}
0 \le \cF_0(\n_\eps) - \widehat{I}_{-1}(0) \le \eps (D(\n_\eps)-D_*).
\end{equation}
Equation \eqref{eq:key-selection} first implies that $D(\n_\eps)\geq D_*$. We claim that $D(\n_\eps)\to D_*$. Otherwise, there would exist $\delta>0$ and a sequence $\eps_k\downarrow0$ such that $D(\n_{\eps_k})\geq D_*+\delta$ for every $k$. By Proposition \ref{prop:profile-compactness}, after passing to a subsequence, there exist translations $y_k\in\R^2$ and some $\bs g\in\cG$ such that
\[
\n_{\eps_k}(\cdot+y_k)-\ein_3 \longrightarrow \bs g-\ein_3 \quad\text{strongly in }H^2.
\]
Since $D$ is translation invariant and continuous under strong $H^2$-convergence, it follows that
\[
D(\n_{\eps_k})\longrightarrow D(\bs g)\le D_*,
\]
which is a contradiction. This proves \eqref{eq:D-select}.

Moreover, if a translated subsequence of minimizers converges strongly in $H^2$ to some $\bs g\in\cG$, then \eqref{eq:D-select} and the continuity of $D$ give $D(\bs g)=D_*$.
Thus every such limit belongs to $\cG_*$. Finally, \eqref{eq:key-selection} and \eqref{eq:D-select} yield
\[
0\le \cF_0(\n_\eps)-\widehat I_{-1}(0) \le \eps\bigl(D(\n_\eps)-D_*\bigr) =o(\eps),
\]
which proves \eqref{eq:F0-rate}. Consequently,
\[
\widehat I_{-1}(\eps) =\cF_0(\n_\eps)-\eps D(\n_\eps) =\widehat I_{-1}(0)-\eps D_*+o(\eps),
\]
and hence \eqref{eq:Ihat-first} follows.

It remains to prove \eqref{eq:distance-Gstar}. If it failed, there would exist $\delta>0$, a sequence $\eps_k\downarrow0$, and corresponding minimizers $\n_{\eps_k}$ such that
\[
\inf_{\substack{y\in\R^2,\, \bs g\in\cG_*}} \norm{\n_{\eps_k}(\,\cdot+y)-\bs g}_{H^2} \geq\delta
\]
for every $k$.
By Proposition \ref{prop:profile-compactness}, after translations and extraction, $\n_{\eps_k}$ converges strongly in $H^2$ to some $\bs g\in\cG$. The preceding selection argument shows that every such limit belongs to $\cG_*$, giving a contradiction.
\end{proof}

We now translate these conclusions back to the original variables and prove Theorem~\ref{thm:strong-field}.
\begin{proof}[Proof of Theorem~\ref{thm:strong-field}]
The maximum $D_*$ is attained by Lemma \ref{lem:Gstar}. Taking
$\eps=H^{-1/2}$ in \eqref{eq:Ihat-first} and using \eqref{eq:exact}
gives
\[
I_{-1}(H)
=
\sqrt{H}\widehat{I}_{-1}(0)-D_*+o(1).
\]

To prove the profile statement, let $\m_H$ be a minimizer of
$I_{-1}(H)$ and define
\[
\n_H(z):=\m_H(H^{-1/4}z).
\]
By \eqref{eq:exact}, $\n_H$ minimizes
$\widehat{I}_{-1}(H^{-1/2})$. Hence
\eqref{eq:distance-Gstar} gives
\[
\inf_{\substack{y\in\R^2,\, \bs g\in\cG_*}}
\norm*{\n_H(\,\cdot+y)-\bs g}_{H^2}
\longrightarrow 0.
\]
For every $y\in\R^2$, setting $a=H^{-1/4}y$ yields
\[
\n_H(\,\cdot+y)
=
\m_H\bigl(a+H^{-1/4}\,\cdot\bigr).
\]
Since $y\mapsto a=H^{-1/4}y$ is a bijection of $\R^2$, the preceding
convergence is equivalent to
\[
\inf_{\substack{a\in\R^2,\, \bs g\in\cG_*}}
\norm*{\m_H(a+H^{-1/4}\,\cdot)-\bs g}_{H^2}
\longrightarrow 0.
\]
This proves the profile assertion.
\end{proof}

\subsection{Pohozaev balance, concentration, and the core scale}

First we apply Derrick's scaling argument \cite{Derrick1964} to obtain a Pohozaev identity. If $\m_H$ minimizes $E_H$ in $\cA_q$ for $q \in \Z$, then for the degree-preserving family $\m_{H,\lambda}(x)=\m_H(x/\lambda)$, we have
\[
0 = \left.\frac{\dif}{\dif \lambda} E_H(\m_{H,\lambda})\right|_{\lambda=1}
= -\int_{\R^2} |\Delta \m_H|^2 \dif x + 2H \int_{\R^2} (1-m_{H,3}) \dif x.
\]
Hence
\begin{equation}\label{eq:Pohozaev}
\int_{\R^2} |\Delta \m_H|^2 \dif x = 2H \int_{\R^2} (1-m_{H,3}) \dif x.
\end{equation}
We combine this exact identity with the profile-selection result. Let $H_k\to\infty$, let $\m_{H_k}$ be a minimizer of $I_{-1}(H_k)$, and set $\delta_k=H_k^{-1/4}$. After passing to a subsequence, not relabeled, there exist $a_k\in\R^2$ and $\n_*\in\cG_*$ such that
\[
\n_k(y):=\m_{H_k}(a_k+\delta_k y)
\]
satisfies
\begin{equation}\label{eq:uk-definition}
\n_k(y) - \ein_3 \longrightarrow \n_* - \ein_3  \quad\text{strongly in } H^2.
\end{equation}
After the change of variables $x=a_k+\delta_k y$, \eqref{eq:Pohozaev} becomes
\[
\int_{\R^2} |\Delta \n_k|^2 \dif y = 2 \int_{\R^2} (1-n_{k,3}) \dif y.
\]
Strong convergence in $H^2$ allows us to pass to the limit and obtain
\[
\int_{\R^2} |\Delta \n_*|^2 \dif y = 2 \int_{\R^2} (1-n_{*,3}) \dif y.
\]
Combining this with $\cF_0(\n_*) = \widehat{I}_{-1}(0)$ gives
\begin{equation}\label{eq:equipartition}
\int_{\R^2}\frac{1}{2} |\Delta \n_*|^2 \dif y =  
 \int_{\R^2} (1-n_{*,3}) \dif y = \frac{\widehat{I}_{-1}(0)}{2}.
\end{equation}
We next return to the original variables and formulate the concentration limits. For a finite signed Radon measure $\nu$, recall the translation notation
\[
\mathscr{T}_a\nu=(x\mapsto x-a)_\#\nu.
\]
The leading energy is of order $\sqrt{H_k}$, as shown in Proposition \ref{prop:leading}.  At this scale, the negative Dirichlet contribution is lower order, so we distinguish the nonnegative leading-order energy from the full signed energy. The corresponding concentration limits are collected in the following proposition.

\begin{proposition}\label{prop:charge-energy-concentration}
Let $H_k\to\infty$, let $\m_{H_k}$ minimize $I_{-1}(H_k)$. Suppose that there exist centers $a_k\in\R^2$ and a profile $\n_*\in\cG_*$ such that the rescaled maps
\[
\n_k(y):=\m_{H_k}(a_k+H_k^{-1/4}y)
\]
satisfy
\[
\n_k-\ein_3\longrightarrow\n_*-\ein_3 \qquad\text{strongly in }H^2(\R^2;\R^3).
\]
Then the following weak-$*$ limits hold.

The topological-charge measure concentrates with total weight $-4\pi$,
\begin{equation}\label{eq:charge-concentration}
\mathscr{T}_{a_k} \bra*{q(\m_{H_k}) \dif x} \stackrel{*}{\rightharpoonup} -4\pi\delta_0,
\end{equation}
while the normalized leading-order energy concentrates with total weight $\widehat{I}_{-1}(0)$,
\begin{equation}\label{eq:positive-concentration}
\mathscr{T}_{a_k} \bra*{\frac{1}{\sqrt{H_k}} \pra*{\frac{1}{2}|\Delta \m_{H_k}(x)|^2  +H_k(1-m_{H_k,3}(x))}\dif x} \stackrel{*}{\rightharpoonup} \widehat{I}_{-1}(0)\delta_0,
\end{equation}
and its fourth-order and potential components each contribute one half of this value:
\begin{equation}\label{eq:fourth-concentration}
\mathscr{T}_{a_k} \bra*{\frac{|\Delta \m_{H_k}(x)|^2}{2\sqrt{H_k}}\dif x} \stackrel{*}{\rightharpoonup} \frac{\widehat{I}_{-1}(0)}{2}\delta_0,
\end{equation}
\begin{equation}\label{eq:potential-concentration}
\mathscr{T}_{a_k} \bra*{\sqrt{H_k}(1-m_{H_k,3})\dif x} \stackrel{*}{\rightharpoonup} \frac{\widehat{I}_{-1}(0)}{2}\delta_0.
\end{equation}
Moreover, the full energy measure has the same limit as its leading-order part:
\begin{equation}\label{eq:full-concentration}
\mathscr{T}_{a_k} \bra*{\frac{1}{\sqrt{H_k}} \pra*{\frac{1}{2}|\Delta \m_{H_k}(x)|^2 -\frac{1}{2}|\nabla \m_{H_k}|^2 +H_k(1-m_{H_k,3}(x))}\dif x} \stackrel{*}{\rightharpoonup} \widehat{I}_{-1}(0)\delta_0.
\end{equation}
\end{proposition}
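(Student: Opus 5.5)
The plan is to push each measure forward to the rescaled variable $y=(x-a_k)/\delta_k$, $\delta_k=H_k^{-1/4}$, and test against $\phi\in C_c(\R^2)$. For a density $f_k(x)\dif x$ we have
\[
\int \phi\, \dif\,\mathscr{T}_{a_k}(f_k\dif x)=\int_{\R^2}\phi(x-a_k)f_k(x)\dif x=\int_{\R^2}\phi(\delta_k y)\,f_k(a_k+\delta_k y)\,\delta_k^2\dif y .
\]
Each density then becomes the corresponding density of $\n_k$ on the profile scale, with the normalizations matching exactly.

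\textbf{Scaling of the densities.} Since $q$ is quadratic in first derivatives and invariant under $q(\m)(a_k+\delta_k y)\delta_k^2=q(\n_k)(y)$, the charge measure becomes $\phi(\delta_k y)q(\n_k)(y)\dif y$. Likewise:
\begin{itemize}
\item $|\Delta\m_{H_k}|^2\delta_k^2/(2\sqrt{H_k})=\frac12|\Delta\n_k|^2$, because $|\Delta\m|^2$ scales by $\delta_k^{-4}$ and $\delta_k^{-2}/\sqrt{H_k}=1$ after the factor $\delta_k^2$; more precisely $\delta_k^{-4}\delta_k^2 H_k^{-1/2}=1$.
\item $\sqrt{H_k}(1-m_3)\delta_k^2=1-n_{k,3}$.
\item The Dirichlet density gives $\frac12|\nabla\m|^2\delta_k^2/\sqrt{H_k}=H_k^{-1/2}\cdot\frac12|\nabla\n_k|^2$.
\end{itemize}
So every statement reduces to showing $\int \phi(\delta_k y)g_k(y)\dif y\to\phi(0)\int g_*$, where $g_k\to g_*$ in $L^1(\R^2)$ is the relevant density of $\n_k$ or $\n_*$.

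\textbf{$L^1$ convergence of the densities.} Strong $H^2$ convergence gives $\Delta\n_k\to\Delta\n_*$ in $L^2$, hence $|\Delta\n_k|^2\to|\Delta\n_*|^2$ in $L^1$. We also have $1-n_{k,3}=\frac12|\n_k-\ein_3|^2\to\frac12|\n_*-\ein_3|^2$ in $L^1$. For the charge density, note that $\nabla\n_k\to\nabla\n_*$ in $L^2$ and $\n_k$ is uniformly bounded with $\n_k\to\n_*$ in $L^\infty$ by the embedding $H^2\hookrightarrow L^\infty$. Writing $q(\n_k)-q(\n_*)$ as a telescoping sum of trilinear terms, each bounded by $\|\n\|_\infty\|\nabla\n\|_2\|\nabla(\n_k-\n_*)\|_2$ or $\|\n_k-\n_*\|_\infty\|\nabla\n\|_2^2$, gives $L^1$ convergence.

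\textbf{Passing to the limit.} For $g_k\to g_*$ in $L^1$ and $\phi$ bounded and continuous, write
\[
\Bigl|\int\phi(\delta_k y)g_k-\phi(0)\int g_*\Bigr|\le\|\phi\|_\infty\|g_k-g_*\|_1+\int|\phi(\delta_k y)-\phi(0)||g_*|\dif y,
\]
and the last term tends to $0$ by dominated convergence since $\delta_k\to0$. The total weights then follow from earlier results:
\begin{itemize}
\item $\int q(\n_*)=4\pi Q(\n_*)=-4\pi$, since $\n_*\in\cA_{-1}$.
\item $\int\frac12|\Delta\n_*|^2=\int(1-n_{*,3})=\widehat I_{-1}(0)/2$ by the equipartition \eqref{eq:equipartition}.
\end{itemize}
This gives \eqref{eq:charge-concentration}, \eqref{eq:fourth-concentration} and \eqref{eq:potential-concentration}, and \eqref{eq:positive-concentration} is their sum.

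\textbf{Full energy measure.} For \eqref{eq:full-concentration}, the extra Dirichlet term has total variation $H_k^{-1/2}D(\n_k)\le H_k^{-1/2}\,\frac12\cF_0(\n_k)$ by \eqref{eq:interpolation}. This is bounded times $H_k^{-1/2}$, hence tends to $0$, and its contribution vanishes.

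The only mildly delicate point is the $L^1$ convergence of the charge density and the bookkeeping of the scaling exponents. Otherwise the argument is routine, because the strong $H^2$ convergence hypothesis does all the work.
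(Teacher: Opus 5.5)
Your proposal is correct and follows essentially the same route as the paper's proof. Both rescale to the profile variable, obtain $L^1$ convergence of the densities (including the trilinear estimate for $q$), and collapse them to $\delta_0$ via the triangle inequality plus dominated convergence. Both read off the weights from $Q(\n_*)=-1$ and the equipartition identity, and dispose of the Dirichlet term by the bound $H_k^{-1/2}D(\n_k)\to 0$.
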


\begin{proof}
The proof relies on the following elementary concentration principle. If $f_k \to f$ strongly in $L^1(\R^2)$ and $\delta_k \to 0$, then
\begin{equation}\label{eq:L1-collapse}
\int_{\R^2} \varphi(\delta_k y) f_k(y) \dif y \longrightarrow \varphi(0) \int_{\R^2}  f(y) \dif y,
\end{equation}
for any test function $\varphi \in C_0(\R^2)$.
Indeed,
\begin{align*}
&\abs*{ \int_{\R^2}\varphi(\delta_ky)f_k(y)\dif y  -\varphi(0)\int_{\R^2}f(y)\dif y}\\
\quad\le &\norm{\varphi}_{L^\infty(\R^2)}\norm{f_k-f}_{L^1(\R^2)}
 +\int_{\R^2}
   |\varphi(\delta_ky)-\varphi(0)|\,|f(y)|\dif y
 \longrightarrow0,
\end{align*}
where the second term converges to zero by dominated convergence.

For the charge density, strong $H^2$-convergence in \eqref{eq:uk-definition} implies that
\begin{align*}
&  \norm{q(\n_k)-q(\n_*)}_{L^1} \le \norm{\n_k - \n_*}_{L^\infty} \norm{\partial_1\n_k}_{L^2(\R^2)}\norm{\partial_2 \n_k}_{L^2}\\
& \quad + \norm{\partial_1 (\n_k-\n_*)}_{L^2} \norm{\partial_2 \n_k}_{L^2} + \norm{\partial_1 \n_*}_{L^2} \norm{\partial_2 (\n_k - \n_*)}_{L^2} \longrightarrow 0.
\end{align*}
The change of variables $x=a_k + \delta_k y$ gives
\[
\int_{\R^2} \varphi(x-a_k) q(\m_{H_k})(x) \dif x = \int_{\R^2} \varphi(\delta_k y) q(\n_k)(y) \dif y.
\]
Applying \eqref{eq:L1-collapse} with $f_k=q(\n_k)$ and $f=q(\n_*)$, and using $\int_{\R^2} q(\n_*)=-4\pi$ yields \eqref{eq:charge-concentration}.
For the energy components, set
\[
h_k(y)=\frac{1}{2}|\Delta \n_k(y)|^2+1-n_{k,3}(y), \quad
h_*(y)=\frac{1}{2}|\Delta \n_*(y)|^2+1-n_{*,3}(y).
\]
Strong convergence in $H^2$  and $1-n_3 = |\n-\ein_3|^2/2$ imply
\[
\frac{1}{2}|\Delta \n_k|^2 \to \frac{1}{2}|\Delta \n_*|^2, \quad 1-n_{k,3} \to 1-n_{*,3} \quad\text{strongly in } L^1(\R^2).
\]
Thus $h_k \to h_*$ strongly in $L^1(\R^2)$. Hence
\begin{align*}
& \int_{\R^2} \varphi(x-a_k) \frac{1}{\sqrt{H_k}} \pra*{\frac{1}{2}|\Delta \m_{H_k}(x)|^2  +H_k(1-m_{H_k,3}(x))} \dif x \\
= & \int_{\R^2} \varphi(\delta_k y) h_k(y) \dif y \longrightarrow \varphi(0)  \int_{\R^2} h_*(y) \dif y = \varphi(0) \widehat{I}_{-1}(0).
\end{align*}
Applying the same argument separately to $|\Delta \n_k|^2/2$ and $1-n_{k,3}$, and using \eqref{eq:equipartition}, gives \eqref{eq:fourth-concentration} and \eqref{eq:potential-concentration}. Finally, for the Dirichlet contribution we have
\[
\frac{1}{2\sqrt{H_k}} \int_{\R^2}  |\nabla \m_{H_k}|^2 \dif x = \frac{D(\n_k)}{\sqrt{H_k}} \longrightarrow 0,
\]
since $D(\n_k)$ remains bounded by \eqref{eq:uk-definition}. Combining this with \eqref{eq:positive-concentration} yields \eqref{eq:full-concentration}.
\end{proof}

The preceding weak-$*$ limits show that the charge and energy collapse to $a_k$, but they do not by themselves quantify the core radius. Following the concentration-function approach in \cite{Lions1984}, we use the potential-energy concentration radius introduced in Theorem~\ref{thm:concentration}.
For the minimizing sequence $(\m_{H_k})$ and the centers $a_k$ appearing in \eqref{eq:uk-definition}, recall that
\[
R_{k,\lambda} = \inf \set*{R>0: \int_{B_R(a_k)} (1-m_{H_{k},3})\dif x \ge \lambda \int_{\R^2} (1-m_{H_{k},3}) \dif x}.
\]
The following corollary shows that $R_{k,\lambda}$ is comparable to $H_k^{-1/4}$ for every fixed $\lambda\in(0,1)$.
\begin{corollary}\label{cor:core-scale}
Under the assumptions and notation of Proposition~\ref{prop:charge-energy-concentration}, for each $\lambda \in (0,1)$, there are constants $0<c_\lambda<C_\lambda<\infty$ such that
\[
c_\lambda H^{-1/4}_k \le R_{k,\lambda} \le C_\lambda H^{-1/4}_k
\]
for sufficiently large $k$. 
\end{corollary}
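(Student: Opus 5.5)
The plan is to rescale to the profile variables and use the strong $L^1$ convergence of the potential density. Put $\delta_k=H_k^{-1/4}$ and $f_k(y):=1-n_{k,3}(y)=\tfrac12|\n_k-\ein_3|^2$, $f_*:=1-n_{*,3}$. Under the change of variables $x=a_k+\delta_k y$ we have
\[
\int_{B_R(a_k)}(1-m_{H_k,3})\dif x=\delta_k^2\int_{B_{R/\delta_k}}f_k\dif y,
\qquad
\int_{\R^2}(1-m_{H_k,3})\dif x=\delta_k^2\int_{\R^2}f_k\dif y .
\]
Hence $R_{k,\lambda}=\delta_k\,\sigma_{k,\lambda}$, where $\sigma_{k,\lambda}:=\inf\set{s>0:\int_{B_s}f_k\ge\lambda\int_{\R^2}f_k}$. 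The corollary is therefore equivalent to the statement that $\sigma_{k,\lambda}$ stays in a compact subset of $(0,\infty)$ for large $k$.

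By the strong $H^2$ convergence \eqref{eq:uk-definition}, we get $f_k\to f_*$ in $L^1(\R^2)$. The limit mass satisfies $M_*:=\int f_*=\widehat I_{-1}(0)/2>0$ by \eqref{eq:equipartition}; positivity also follows directly, since $\n_*\in\cA_{-1}$ is nonconstant. Define $\Phi_k(s)=\int_{B_s}f_k$ and $\Phi_*(s)=\int_{B_s}f_*$. Then $\sup_{s}|\Phi_k(s)-\Phi_*(s)|\le\norm{f_k-f_*}_{L^1}\to0$ and $\int f_k\to M_*$. Moreover $\Phi_*$ is continuous and nondecreasing, with $\Phi_*(0^+)=0$ and $\Phi_*(\infty)=M_*$.

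For the upper bound, pick $S_\lambda$ with $\Phi_*(S_\lambda)\ge\frac{1+\lambda}{2}M_*$. For large $k$, uniform convergence gives $\Phi_k(S_\lambda)>\lambda\int f_k$, so $\sigma_{k,\lambda}\le S_\lambda$. For the lower bound, pick $s_\lambda>0$ with $\Phi_*(s_\lambda)\le\frac{\lambda}{2}M_*$; this is possible by absolute continuity of the integral. Then $\Phi_k(s)\le\Phi_k(s_\lambda)<\lambda\int f_k$ for all $s\le s_\lambda$ and large $k$, so $\sigma_{k,\lambda}\ge s_\lambda$. Setting $c_\lambda=s_\lambda$ and $C_\lambda=\max(S_\lambda,2s_\lambda)$ then gives the claim.

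No step is genuinely hard. The only points needing care are the positivity of $M_*$, which rules out degenerate thresholds, and the strict inequalities. The latter require the margins $\frac{1+\lambda}{2}$ and $\frac{\lambda}{2}$, so that the $o(1)$ errors in both $\Phi_k$ and $\int f_k$ can be absorbed.
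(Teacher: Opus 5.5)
Your proposal is correct and follows essentially the same route as the paper. You rescale by $\delta_k=H_k^{-1/4}$ so that the ratio becomes $\Phi_k(r)/\int f_k$, and use strong $L^1$ convergence of $1-n_{k,3}$ together with $P_*>0$. You then choose radii at which the limiting mass fraction lies strictly below and above $\lambda$. Your explicit margins $\lambda/2$, $(1+\lambda)/2$ and the monotonicity remark for $s\le s_\lambda$ spell out details the paper leaves implicit.
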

\begin{proof}
Define
\[
P_k = \int_{\R^2} 1-n_{k,3}(y) \dif y, \qquad
P_* = \int_{\R^2} 1-n_{*,3}(y) \dif y.
\]
Since $1-n_{k,3} \to 1-n_{*,3}$ strongly in $L^1(\R^2)$ and $Q(\n_*)=-1$, we have $P_k \to P_*$ and $P_*>0$.  Hence we can choose $c_\lambda, C_\lambda>0$ so that
\[
\int_{B_{c_\lambda}} (1-n_{*,3}) \dif y < \lambda P_*, \quad
\int_{B_{C_\lambda}} (1-n_{*,3}) \dif y > \lambda P_*.
\]
The strong $L^1$-convergence of $1-n_{k,3}$ and $P_k \to P_*$ preserve these strict inequalities  for sufficiently large $k$:
\[
\int_{B_{c_\lambda}} (1-n_{k,3}) \dif y < \lambda P_k, \quad
\int_{B_{C_\lambda}} (1-n_{k,3}) \dif y > \lambda P_k.
\]
Returning to the original variable $x=a_k + \delta_k y$ gives
\[
\frac{\int_{B_{r\delta_k}(a_k)} (1-m_{H_{k},3}) \dif x}{\int_{\R^2} (1-m_{H_{k},3}) \dif x}
= \frac{1}{P_k} \int_{B_r} (1-n_{k,3}) \dif y \quad\text{for every } r>0.
\]
Taking $r=c_\lambda,C_\lambda$ proves the result.
\end{proof}

\begin{proof}[Proof of Theorem~\ref{thm:concentration}]
The subsequential profile statement follows from Proposition \ref{prop:profile-compactness} and Theorem~\ref{thm:first-order}.
The remaining measure-concentration and core-scale assertions are precisely Proposition \ref{prop:charge-energy-concentration} and Corollary \ref{cor:core-scale}.
\end{proof}

\bibliographystyle{abbrv}
\bibliography{references}

\end{document}